\newcommand{\subscript}[2]{$#1 _ #2$}
\newcommand{\df}{\mathrm{d}}
\def\baro{\vskip  .2truecm\hfill \hrule height.5pt \vskip  .2truecm}
\def\barba{\vskip -.1truecm\hfill \hrule height.5pt \vskip .4truecm}
\newtheorem{theorem}{Theorem}
\newtheorem{lemma}[theorem]{Lemma}
\newtheorem{remark}[theorem]{Remark}
\newtheorem{proposition}[theorem]{Proposition}
\newcommand{\rb}{{\bar{r}}}
\newcommand{\X}{{\mathsf{X}}}
\newcommand{\norm}[1]{\lVert#1\rVert}
\title{\bf Dimension free convergence rates for Gibbs samplers for
  Bayesian linear mixed models}
\author{Zhumengmeng Jin and James P. Hobert \\ Department of Statistics
  \\ University of Florida}
\date{August 2021}
\keywords{Contraction condition, Convergence complexity analysis,
  Drift condition, Geometric ergodicity, High-dimensional inference,
  Markov chain Monte Carlo, Total variation distance, Wasserstein
  distance}
\begin{document}
\maketitle

\begin{abstract}
The emergence of big data has led to a growing interest in so-called
convergence complexity analysis, which is the study of how the
convergence rate of a Monte Carlo Markov chain (for an intractable
Bayesian posterior distribution) scales as the underlying data set
grows in size.  Convergence complexity analysis of \textit{practical}
Monte Carlo Markov chains on continuous state spaces is quite
challenging, and there have been very few successful analyses of such
chains.  One fruitful analysis was recently presented by
\citet{qin:hobe:2021a}, who studied a Gibbs sampler for a simple
Bayesian random effects model.  These authors showed that, under
regularity conditions, the geometric convergence rate of this Gibbs
sampler converges to zero as the data set grows in size.
It is shown herein that similar behavior is
exhibited by Gibbs samplers for more general Bayesian models that
possess both random effects and traditional continuous covariates, the
so-called mixed models.  The analysis employs the Wasserstein-based
techniques introduced by \citet{qin:hobe:2021a}.
\end{abstract}

\section{Introduction}
\label{sec:intro}

Markov chain Monte Carlo (MCMC) methods have revolutionized the
exploration of intractable probability distributions (see,
  e.g., \citep{diac:2009}).  This has affected many areas of science, but
none more than Bayesian statistics.  Indeed, MCMC allows for the
exploration of intractable high dimensional posterior distributions
that would have remained virtually impenetrable without these tools.
The revolution has not come without a cost, however.  Employing an
MCMC algorithm in a principled manner requires information about the
rate at which the underlying Markov chain converges to its target
distribution \citep{fleg:hara:jone:2008}, and such information is
notoriously difficult to ascertain.  Substantial progress has been
made over the last several decades, both on the development of general
techniques to get bounds on convergence rates (mostly based on drift
and minorization conditions), and on the application of said
techniques to specific Monte Carlo Markov chains.  Unfortunately, most
of these results are not sharp enough to be useful in the emerging
area of \textit{convergence complexity analysis}, which is the (big
data driven) study of how the convergence rate of a Monte Carlo Markov
chain for a Bayesian problem scales with the size of the underlying
data set \citep{raja:spar:2015,qin:hobe:2021b}.  In fact, it has
become clear that convergence complexity analysis requires new theory
(see, e.g.,\citep{durm:moul:2015,hair:matt:sche:2011,qin:hobe:2021a,yang:rose:2019}).
 \citet{qin:hobe:2021a} provide a new technique that entails bounding
the \textit{Wasserstein} distance to stationarity, and then converting
the Wasserstein bounds into total variation bounds.  These authors
apply their method to a Gibbs sampler for a simple Bayesian random
effects model and show that, under regularity conditions, the
geometric convergence rate converges to zero
as the data set grows in size.  This is, of course, even
better than dimension free.  In this paper, we show that similar
behavior is exhibited by Gibbs samplers for more general Bayesian
models that possess both random effects and traditional continuous
covariates, the so-called mixed models.  We now provide a detailed
description of the Gibbs samplers that are the focus of our study.

Consider the linear mixed effects model given by
\[
y_{ij} = x_{ij}^\intercal \beta + \eta_i + e_{ij} \,,
\]
where $i=1,\dots,q$, $j=1,\dots,r_i$, $x_{ij}$ is a $p \times 1$
vector of known covariates associated with $y_{ij}$, $\beta$ is an
unknown $p$-dimensional regression parameter, the $\eta_i$ are iid
$\mbox{N}(\mu,\lambda^{-1})$, and the $e_{ij}$ are iid
$\mbox{N}(0,\tau^{-1})$ and independent of the $\eta_i$.  A Bayesian
version of this model is formed by placing prior distributions on the
unknown parameters $\beta$, $\mu$, $\lambda$, and $\tau$.  In the
model we consider, the four parameters are assumed \textit{a priori}
independent, and we put flat priors on $\beta$ and $\mu$, and proper
gamma priors on $\lambda$ and $\tau$.  In particular, our prior
density is proportional to
\[
\lambda^{a_1-1} \exp(-b_1 \lambda) I_{\mathbb{R}_+}(\lambda)
\tau^{a_2-1} \exp(-b_2 \tau) I_{\mathbb{R}_+}(\tau) \;,
\]
where $a_1$, $a_2$, $b_1$, and $b_2$ are strictly positive
hyperparameters and $\mathbb{R}_+ := (0,\infty)$.  (Assume for the
time being that the resulting posterior distribution is proper.)
Denote the posterior density by
$\pi^*(\beta,\mu,\eta_1,\dots,\eta_q,\lambda,\tau \, | \, Y)$, where
$Y = (y_{11},\dots,y_{qr_q})^\intercal$ is the observed data, and, as
usual, the random effects are included because they are unobserved.
This posterior density, which has dimension $q + p + 3$, is highly
intractable.  However, because the prior density is conditionally
conjugate, there is a simple (two-block) Gibbs sampler that can be
used to explore $\pi^*$.  It turns out to be more convenient to work
with a very simple transformation of $\pi^*$.  Indeed, let $\eta_{00}
= \sqrt{q} \, \beta$ and $\eta_0 = \sqrt{q} \, \mu$, and define $\eta
= (\eta^\intercal_{00}, \eta_0, \eta_1, \dots, \eta_q)^\intercal$.
Denote the new posterior density by $\pi(\eta,\lambda,\tau \, | \,
Y)$.  In this paper, we analyze the Markov chain $\Gamma =
\{\eta^{(n)}\}_{n=0}^\infty$ whose Markov transition density $k:
\mathbb{R}^{q+p+1} \times \mathbb{R}^{q+p+1} \rightarrow (0,\infty)$
is given by
\begin{equation}
  \label{eq:mtd}
  k(\eta,\tilde{\eta}) = \int_0^\infty \int_0^\infty
  \pi_1(\tilde{\eta} \, | \, \lambda, \tau, Y) \pi_2(\lambda,\tau \, |
  \, \eta, Y) \, d\lambda \, d\tau \;,
\end{equation}
where $\pi_1(\eta \, | \, \lambda, \tau, Y)$ and $\pi_2(\lambda,\tau
\, | \, \eta, Y)$ denote conditional densities associated with $\pi$.
Of course, $\Gamma$ is the $\eta$-marginal chain of the two-block
Gibbs sampler that alternates between $(\lambda,\tau)$ and $\eta$.
It's easy to see that $\Gamma$ is irreducible, aperiodic, and positive
Harris recurrent with invariant density $\pi(\eta \, | \, Y) :=
\int_0^\infty \int_0^\infty \pi(\eta,\lambda,\tau \, | \, Y) \,
d\lambda \, d\tau$.  Furthermore, it is well known that, for two-block
Gibbs samplers, the marginal chains have the same convergence rate as
the underlying two-block Gibbs sampler (see,
  e.g., \citep{robe:rose:2001,diac:khar:salo:2008}).  Thus, in order to
learn about the convergence complexity of the two-block Gibbs sampler,
it suffices to study the marginal chain $\Gamma$.  It would clearly
also suffice to study the other marginal chain whose Markov transition
density $k': \mathbb{R}_+^2 \times \mathbb{R}_+^2 \rightarrow
(0,\infty)$ is given by
\[
k'\big((\lambda,\tau),(\tilde{\lambda},\tilde{\tau}) \big) =
\int_{\mathbb{R}^{q+p+1}} \pi_2(\tilde{\lambda},\tilde{\tau} \, | \,
\eta, Y) \pi_1(\eta \, | \, \lambda, \tau, Y) \, d\eta \;.
\]
However, despite the fact that this chain's dimension remains constant
(at 2) as $p$ and $q$ increase, $\Gamma$ is more amenable to analysis.

Drawing from $\lambda,\tau \,| \, \eta,Y$ is simple since, conditional
on $(\eta,Y)$, $\lambda$ and $\tau$ are independent, and each has a
gamma distribution.  Making draws from $\eta \,| \, \lambda,\tau,Y$ is
more complicated, but still straightforward.  Our strategy is to
sample sequentially.  We first draw from $\eta_{00} \,|\,
\lambda,\tau,Y$, and then from $\eta_0 \,|\, \eta_{00},
\lambda,\tau,Y$, and finally from $\eta_1,\dots,\eta_q \,|\, \eta_0,
\eta_{00}, \lambda,\tau,Y$.  All three of these conditional
distributions are normal distributions, and the last of the three
steps is easy since $\{\eta_i\}_{i=1}^q$ are independent given
$(\eta_0, \eta_{00}, \lambda,\tau,Y)$.  The distributions of
$\eta_{00} \,|\, \lambda,\tau,Y$ and $\eta_0 \,|\, \eta_{00},
\lambda,\tau,Y$ are derived in Appendix~\ref{app:A}.

In order to provide a precise statement of the algorithm, we must
introduce a bit of notation.  Let $N = \sum_{i=1}^q r_i$ (total sample
size), and let $X$ denote what is traditionally called the design
matrix, i.e., $X$ is the $N \times p$ matrix given by $X = (x_{11},
\dots, x_{q r_q})^\intercal$.  Let $\bar{X} = (\bar{x}_1
1_{r_1}^\intercal, \dots, \bar{x}_q 1_{r_q}^\intercal)^\intercal$,
where $\bar{x}_i = r_i^{-1} \sum_{j=1}^{r_i} x_{ij}$, and $1_{r_i}$ is
an $r_i \times 1$ column vector of 1s.  Note that $\bar{X}$ is also $N
\times p$.  Let $\bar{Y} = (\bar{y}_1 1_{r_1}^\intercal, \dots,
\bar{y}_q 1_{r_q}^\intercal)^\intercal$, where $\bar{y}_i = r_i^{-1}
\sum_{j=1}^{r_i} y_{ij}$.  Thus, we have
\[
X = \begin{bmatrix}
x_{11}^\intercal \\
\vdots \\
x_{1 r_1}^\intercal \\
\vdots \\
x_{q1}^\intercal \\
\vdots \\
x_{q r_q}^\intercal
\end{bmatrix},
\qquad
\bar{X} = \begin{bmatrix}
\bar{x}_1^\intercal \\
\vdots \\
\bar{x}_1^\intercal \\
\vdots \\
\bar{x}_q^\intercal \\
\vdots \\
\bar{x}_q^\intercal
\end{bmatrix},  \qquad \mbox{and}
\qquad
\bar{Y} = \begin{bmatrix}
\bar{y}_1 \\
\vdots \\
\bar{y}_1 \\
\vdots \\
\bar{y}_q \\
\vdots \\
\bar{y}_q
\end{bmatrix}.
\]
Let $c_i = r_i \tau/(\lambda + r_i \tau)$, $D_c = \oplus_{i=1}^q c_i
I_{r_i}$, and
\[
M = D_c + \frac{(I-D_c)1 1^\intercal (I-D_c)}{1^\intercal (I-D_c) 1} \;.
\]
(When we use $1$ with no subscript, it is understood to mean $1_N$.)
Finally, let $Q = (X^\intercal X - \bar{X}^\intercal M \bar{X})^{-1}$,
$v = \sqrt{q} \, Q (X^\intercal Y - \bar{X}^\intercal M \bar{Y})$,
$t_i = \lambda + r_i \tau$, and $z_i = t_i/(r_i \lambda \tau)$.  We
now state the algorithm for simulating $\Gamma$.  If the current state
of the chain is $\eta^{(n)} = \eta = (\eta^\intercal_{00}, \eta_0,
\eta_1, \dots, \eta_q)^\intercal$, then we simulate the next state,
$\eta^{(n+1)}$ using the following procedure.

\baro \vspace*{2mm}
\noindent {\rm Iteration $n+1$ of the $\eta$-marginal Gibbs algorithm:}
\begin{enumerate}%[1.]
\item Draw $\lambda \sim \mbox{Gamma} \Big( q/2 + a_1, b_1 +
  \frac{1}{2} \sum_{i=1}^q (\eta_i - \eta_0/\sqrt{q})^2 \Big)$.
\item Draw $\tau \sim \mbox{Gamma} \Big( N/2 + a_2, b_2 +
  \frac{1}{2} \sum_{i=1}^q \sum_{j=1}^{r_i} (y_{ij} - x^\intercal_{ij}
  \eta_{00}/\sqrt{q} - \eta_i)^2 \Big)$.
\item Draw $\eta^{(n+1)}_{00} \sim \mbox{N}_p \Big( v, \frac{q}{\tau}
  Q \Big)$.
\item Draw $\eta^{(n+1)}_0 \sim \mbox{N} \Big( \frac{\sqrt{q}
  \sum_{i=1}^q (\bar{y}_i - \bar{x}^\intercal_i
  \eta^{(n+1)}_{00}/\sqrt{q})/z_i}{\sum_{i=1}^q 1/z_i},
  \frac{q}{\sum_{i=1}^q 1/z_i} \Big)$.
\item For $i=1,2,\dots,q$, draw $\eta^{(n+1)}_i \sim \mbox{N} \Big(
  \frac{\lambda \eta^{(n+1)}_0}{\sqrt{q} \, t_i} + \frac{r_i
    \tau}{t_i} \big(\bar{y}_i - \bar{x}^\intercal_i
  \eta^{(n+1)}_{00}/\sqrt{q} \big), 1/t_i \Big)$.
\end{enumerate}
\barba

\noindent Steps 1 and 2 simulate from $\pi_2(\lambda,\tau \, | \,
\eta, Y)$, while steps 3, 4, and 5 simulate from $\pi_1(\eta \, | \,
\lambda, \tau, Y)$.

Let $k^{(n)}(\eta,\cdot)$ denote the $n$-step Markov transition
density (Mtd); i.e., the density of $\eta^{(n)}$ given that
$\eta^{(0)} = \eta$.  The chain $\Gamma$ is said to be geometrically
ergodic if there exist a function $M: \mathbb{R}^{q+p+1} \rightarrow
[0,\infty)$ and a constant $\rho \in [0,1)$ such that for all $n$ and
    all $\eta$, we have
\begin{equation}
\label{eq:ge_den}
  \frac{1}{2} \int_{\mathbb{R}^{q+p+1}}
  \big|k^{(n)}(\eta,\tilde{\eta}) - \pi(\tilde{\eta} \, | \, Y) \big|
  \, d\tilde{\eta} \le M(\eta) \, \rho^n \;.
\end{equation}
The left-hand side of \eqref{eq:ge_den} is the total variation
distance between the distribution of $\eta^{(n)}$ (given that
$\eta^{(0)} = \eta$) and the invariant distribution.  (A more general
development of these concepts is provided in Section~\ref{sec:back}.)
The \textit{geometric rate of convergence}, $\rho_*$, is defined to be
the smallest $\rho \in [0,1]$ that satisfies \eqref{eq:ge_den} with
some $M(\cdot)$.  Results in \citet{roma:hobe:2012} imply that, under
mild regularity conditions, $\Gamma$ is indeed geometrically ergodic,
but their result is \textit{qualitative} in the sense that they do not
provide an explicit upper bound for $\rho_*$.  Our interest here
centers on the deeper issue of convergence complexity, i.e., on an
understanding of how $\rho_*$ scales as the underlying data set
increases in size.  More specifically, it is clear that $\rho_*$
depends on the underlying data, which is comprised of $Y$, $X$, $q$,
$p$, and the $r_i$s.  We consider a sequence of data sets of
increasing size, and show that, under regularity conditions concerning
the relative rates at which $q$, $p$, and the $r_i$ grow, not only is
$\rho_*$ bounded away from unity as the data set grows, but it
actually converges to 0.  Keep in mind that a convergence rate of 0
corresponds to immediate convergence, as if making iid draws from the
target.
Our main result (Theorem~\ref{thm:main} in
Section~\ref{sec:main}) is a generalization of Proposition 25 in
\citep{qin:hobe:2021a},
who consider the Gibbs sampler for a Bayesian
random effects model that is a simplified version of our mixed model
in which there are no covariates, and the data set is balanced, i.e.,
$r_i \equiv r$.  Our proofs are similar in structure to those of
\citet{qin:hobe:2021a}, but there are substantial differences due to
the fact that our mixed model is markedly more complex than the model
that they considered.

The remainder of the paper is organized as follows.
Section~\ref{sec:back} contains general background on Markov chain
convergence, as well as statements of results from
\citet{qin:hobe:2021a} and \citet{madr:seze:2010}, which we use to
analyze $\Gamma$.  Section~\ref{sec:main} contains a statement and
proof of the main result.  The proof relies on three preparatory
results, also stated in Section~\ref{sec:main}, that are proven in
subsequent sections.  In particular, Section~\ref{sec:conversion}
contains a proof of Proposition~\ref{prop:conversion}, which allows us
to convert Wasserstein bounds into total variation bounds.
Sections~\ref{sec:drift} and \ref{sec:contraction} contain proofs of
Proposition~\ref{prop:drift} (a drift condition) and
Proposition~\ref{prop:contraction} (a contraction condition),
respectively.  Proofs of some technical lemmas are relegated to the
Appendix.

\section{Markov Chain Background}
\label{sec:back}

Let $\X \subset \mathbb{R}^d$ and let $\mathcal{B}$ denote its Borel
$\sigma$-algebra.  Suppose that $K:\X \times \mathcal{B} \to [0,1]$ is
a Markov transition kernel (Mtk).  For any $n \in \mathbb{N} :=
\{1,2,3,\dots\}$, let $K^n$ be the $n$-step transition kernel, so that
$K^1 = K$.  For any probability measure $\nu: \mathcal{B} \to [0,1]$,
denote $\int_{\X} \nu(\df x) K^n(x, \cdot)$ by $\nu K^n (\cdot)$.  If
$\delta_x$ denotes a point mass at $x$, then $\delta_x K^n (\cdot) =
K^n(x,\cdot)$, and we will abbreviate this to $K_x^n(\cdot)$.  Assume
that the Markov chain corresponding to $K$ is irreducible, aperiodic,
and positive Harris recurrent (see, e.g., \citep{meyn:twee:2009}),
and let $\Pi$ denote its unique invariant probability distribution.
The goal of convergence analysis is to understand how quickly $\nu
K^n$ converges to~$\Pi$ as~$n \to \infty$ for a large class of $\nu$s.
The difference between $\nu K^n$ and $\Pi$ is usually measured using
the total variation (TV) distance, which is now defined.  For two
probability measures on $(\X, \mathcal{B})$, $\nu$ and $\phi$, the TV
distance between them is
\[
d_{\mbox{\scriptsize{TV}}} (\nu,\phi) = \sup_{A \in \mathcal{B}} \,
[\nu(A) - \phi(A)] \,.
\]
(If $\nu$ and $\phi$ each have a density with respect to a common
measure, then TV distance can be computed by integrating the absolute
difference between the densities - see, e.g., \eqref{eq:ge_den}.)  The
Markov chain defined by $K$ is \textit{geometrically ergodic} if there
exist $\rho < 1$ and $M: \X \to [0,\infty)$ such that, for each $x \in
  \X$ and $n \in \mathbb{N}$,
\begin{equation}
  \label{eq:ge}
  d_{\mbox{\scriptsize{TV}}}(K_x^n, \Pi) \leq M(x) \, \rho^n \,.
\end{equation}
Define the \textit{geometric convergence rate} of the chain as
\[
\rho_* = \inf \big\{ \rho \in [0,1] : \text{\eqref{eq:ge} holds for
  some } M: \X \to [0,\infty) \big\} \;.
\]
The chain is geometrically ergodic if and only if $\rho_* < 1$.

The standard technique for developing upper bounds on $\rho_*$
requires the construction of drift and minorization (d\&m) conditions
for the Markov chain under study
\citep{rose:1995,robe:rose:2004,baxe:2005}.  Unfortunately, the
d\&m-based methods are often overly conservative, especially in
high-dimensional situations (see, e.g., \citep{raja:spar:2015,qin:hobe:2021b}), and there is mounting
evidence suggesting that convergence complexity analysis becomes more
tractable when TV distance is replaced with Wasserstein distance
(see, e.g., \citep{hair:matt:sche:2011,durm:moul:2015,qin:hobe:2021a}).  In the
remainder of this section, we describe a method of bounding $\rho_*$
indirectly using Wasserstein distance.

% Let $\psi(\cdot,\cdot)$ denote the usual Euclidean distance on
% $\mathbb{R}^d$, i.e., $\psi(x,y) = \norm{x-y}$, and assume that
% $(\X,\psi)$ is a Polish metric space.
% Assume that $\X$ together with the usual Euclidean norm $\|\cdot\|$ constitutes a Polish metric space.
Assume that $\X$ together with the usual Euclidean distance (based on the usual Euclidean norm $\|\cdot\|$) constitutes a Polish metric space.
For two probability measures on
$(\X, \mathcal{B})$, $\nu$ and $\phi$, their Wasserstein distance is
defined as
\[
d_{\mbox{\scriptsize{W}}}(\nu, \phi) = \underset{\xi \in \tau(\nu,
  \phi)}{\inf} \int_{\X \times \X} \norm{x-y} \, \xi(\df x, \df y) \;,
\]
where $\tau(\nu, \phi)$ is the set of all couplings of $\nu$ and
$\phi$; i.e., the set of all probability measures $\xi(\cdot, \cdot)$
on $(\X \times \X, \mathcal{B} \times \mathcal{B})$ having marginals
$\nu$ and $\phi$.  Here is a result that provides a connection between
Wasserstein distance and TV distance.

\begin{theorem}[\citet{madr:seze:2010}]
  \label{thm:ms}
  Assume that $K_x(\cdot)$ has a density $k(x,\cdot)$ with respect to
  some dominating measure $\mu$ for all $x \in \X$.  If there exists a
  constant $C < \infty$ such that, for all $x, y \in \X$,
\[
\int_\X \big \lvert k(x,z) - k(y,z) \big \rvert \, \mu(\df z) \le C \,
\norm{x-y} \;,
\]
then, for all $n \in \{2,3,4,\dots\}$, we have
\[
d_{\mbox{\scriptsize{TV}}}(K_x^n, \Pi) \le \frac{C}{2} \,
d_{\mbox{\scriptsize{W}}}(K_x^{n-1}, \Pi) \;.
\]
\end{theorem}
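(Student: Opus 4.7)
The plan is to reinterpret the density hypothesis as a TV-Lipschitz estimate for the kernel $x \mapsto K_x$, and then transport that pointwise Lipschitz bound to a contraction bound $\nu K, \phi K \mapsto \tfrac{C}{2} d_{\mbox{\scriptsize{W}}}(\nu,\phi)$ via the coupling characterization of Wasserstein distance. Using the standard identity $d_{\mbox{\scriptsize{TV}}}(K_x, K_y) = \tfrac{1}{2} \int_\X |k(x,z) - k(y,z)| \, \mu(\df z)$, the assumption becomes
\[
d_{\mbox{\scriptsize{TV}}}(K_x, K_y) \le \tfrac{C}{2} \norm{x-y} \quad \text{for all } x,y \in \X.
\]

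Next, I would fix probability measures $\nu, \phi$ on $(\X, \mathcal{B})$ and any coupling $\xi \in \tau(\nu, \phi)$, and write, for each $A \in \mathcal{B}$,
\[
\nu K(A) - \phi K(A) = \int_{\X \times \X} \bigl[ K_x(A) - K_y(A) \bigr] \, \xi(\df x, \df y) \le \int_{\X \times \X} d_{\mbox{\scriptsize{TV}}}(K_x, K_y) \, \xi(\df x, \df y),
\]
where the inequality uses $K_x(A) - K_y(A) \le d_{\mbox{\scriptsize{TV}}}(K_x, K_y)$ for each fixed $A$. Taking the supremum over $A$ on the left, inserting the TV-Lipschitz bound, and taking the infimum over couplings then yields
\[
d_{\mbox{\scriptsize{TV}}}(\nu K, \phi K) \le \tfrac{C}{2} \, d_{\mbox{\scriptsize{W}}}(\nu, \phi).
\]

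Finally, I would specialize with $\nu = K_x^{n-1}$ and $\phi = \Pi$ and use invariance $\Pi K = \Pi$ to conclude that, for every $n \ge 2$,
\[
d_{\mbox{\scriptsize{TV}}}(K_x^n, \Pi) = d_{\mbox{\scriptsize{TV}}}(K_x^{n-1} K, \Pi K) \le \tfrac{C}{2} \, d_{\mbox{\scriptsize{W}}}(K_x^{n-1}, \Pi),
\]
which is the stated bound. There is no substantive obstacle here; the only points needing a brief check are (i) joint measurability of $(x,y) \mapsto d_{\mbox{\scriptsize{TV}}}(K_x, K_y)$ so that it can be integrated against $\xi$, which follows from the density representation and Fubini applied to $\tfrac{1}{2} \int |k(x,z) - k(y,z)| \mu(\df z)$, and (ii) the passage of the supremum over $A$ through the integral, which is immediate because the pointwise bound is uniform in $A$.
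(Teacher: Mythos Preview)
Your argument is correct. The paper does not supply its own proof of this theorem; it is quoted as a result of \citet{madr:seze:2010} and used as a black box, so there is no in-paper proof to compare against. Your route---rewriting the density hypothesis as the pointwise bound $d_{\mbox{\scriptsize{TV}}}(K_x,K_y)\le \tfrac{C}{2}\norm{x-y}$, integrating this against an arbitrary coupling to get $d_{\mbox{\scriptsize{TV}}}(\nu K,\phi K)\le \tfrac{C}{2}\,d_{\mbox{\scriptsize{W}}}(\nu,\phi)$, and then specializing to $\nu=K_x^{n-1}$, $\phi=\Pi$ with $\Pi K=\Pi$---is the standard one and is exactly how the Madras--Sezer argument goes. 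The two measurability/order-of-operations checks you flag are the only places one might pause, and your justifications for them are adequate.
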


Suppose that the hypothesis of Theorem~\ref{thm:ms} holds, and that
the Markov chain driven by $K$ is geometrically ergodic with respect
to Wasserstein distance, i.e., we have $\gamma \in [0,1)$ and $M: \X
  \rightarrow [0,\infty)$ such that
    $d_{\mbox{\scriptsize{W}}}(K^m_x,\Pi) \le M(x) \, \gamma^m$ for
    all $x \in \X$ and all $m \in \mathbb{N}$.  Then it follows
    immediately from Theorem~\ref{thm:ms} that $\rho_* \le \gamma$.

One way to bound Wasserstein distance is through coupling, and
coupling is often achieved via random mappings, which we now describe.
On a probability space $(\Omega, \mathcal{F}, P)$, let $\theta: \Omega
\to \Theta$ be a random element, and let $\tilde{f}: \X \times \Theta
\to \X$ be a Borel measurable function.  Define $f(x) = \tilde{f}(x,
\theta)$ for all $x \in \X$.  Then $f$ is called a random mapping on
$\X$.  The evolution of a Markov chain can often be viewed as being
driven by a random mapping.  If $f(x) \sim K_x(\cdot)$ for all $x \in
\X$, then we say that $f$ \textit{induces} $K$.  For example, suppose
that $\X = \mathbb{R}$ and $K(x,dy) = (2 \pi)^{-1/2} \exp \{ - (y -
x/2)^2/2 \} \, dy$.  Let $Z$ be standard normal, and define
$\tilde{f}(x,Z) = x/2 + Z$.  Then the random mapping $f(x) = x/2 + Z$
induces $K$.  A random mapping $f$ is called \textit{differentiable}
if, with probability 1, for each $x,y \in \X$, $\frac{\df}{\df t} f(x
+ t(y-x))$, as a function of $t \in [0,1]$, exists and is integrable
\citep{qin:hobe:2021a}.  The following result provides a constructive
method of forming a bound on the Wasserstein distance to stationarity.

\begin{theorem}[Qin \& Hobert, 2021b]
  \label{thm:qh_2021a}

Suppose that $\X$ is a convex subset of $\mathbb{R}^d$, and assume
that the following three conditions hold.

\begin{enumerate}[label=(\subscript{A}{{\arabic*}})]%[$({A}_1)$]
    \item
    There exist $c \in (0, \infty)$, $\zeta \in [0,1)$, $L \in
      [0,\infty)$, and a function $V: \X \to [0, \infty)$ such that
    \begin{equation}
    \label{eq:drift_drift}
      \int_{\mathsf{X}}V(x')K(x, dx') \leq \zeta V(x) + L
    \end{equation}
    for each $x \in \mathsf{X}$, and
    \begin{equation}
    \label{eq:norm_drift}
      c^{-1} \norm{x-y} \leq V(x) + V(y) + 1
    \end{equation}
    for each $(x,y) \in \mathsf{X} \times \mathsf{X}$.
    \item
    There exist a differentiable random mapping $f$ that induces $K$,
    some $\xi > 2L/(1-\zeta)$, $\gamma < 1$ and $\gamma_0 < \infty$
    such that
    \[
    \sup_{s \in [0,1]} \mathrm{E}
    \bigg \| \frac{\df}{\df s} f(x + s(y - x)) \bigg \| \leq
    \begin{cases}
          \gamma   \| x - y \| & \text{if $(x,y) \in C$} \\
          \gamma_0 \| x - y \| & \text{otherwise} \;,
    \end{cases}
    \]
    where ${\cal C} = \{ (x,y) \in \X \times \X : V(x) + V(y) \leq \xi
    \}$.
    \item
    Either $\gamma_0 \leq 1$ or
    \[
    \frac{\log(2L + 1)}{\log(2L + 1) - \log(\gamma)} < \frac{-\log
      \big[ (\zeta \xi + 2L + 1) / (\xi+1) \big]}{\log(\gamma_0) -
      \log \big[ (\zeta \xi + 2L + 1) / (\xi+1) \big]} \,.
    \]
\end{enumerate}

\noindent Then for each $x \in \mathsf{X}$, $n \in \mathbb{N}$, and
any real number $a$ such that
\[
 \frac{\log(2L + 1)}{\log(2L + 1) - \log(\gamma)} < a < \frac{-\log \big[
     (\zeta \xi + 2L + 1) / (\xi+1) \big]}{\log(\gamma_0 \vee 1) -
   \log \big[ (\zeta \xi + 2L + 1) / (\xi+1) \big]} \;,
\]
we have
\[
d_W(K^n_x, \Pi) \leq c \Big( \frac{(\zeta + 1)V(x) + L + 1}{1 -
  \rho_a} \Big) \rho_a^n \;,
\]
where
\[
\rho_a = \big[ \gamma^a (2L + 1)^{1-a} \big] \vee \Big[ \gamma_0^a
  \Big( \frac{\zeta \xi + 2L + 1}{\xi + 1} \Big)^{1-a} \Big] < 1 \;.
\]
\end{theorem}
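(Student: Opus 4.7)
My plan is to introduce a weighted semimetric that interpolates between the Euclidean distance and the drift functional, and to show that $K$ is a strict expected contraction in this semimetric at rate exactly $\rho_a$. Concretely, for the exponent $a$ specified in the statement, set
\[
\tilde d(x,y) \;=\; \norm{x-y}^{a}\,\big(V(x) + V(y) + 1\big)^{1-a}\,.
\]
The heuristic is that the Euclidean factor contracts in expectation by $\gamma$ on $\mathcal{C}$ and expands at worst by $\gamma_0$ off $\mathcal{C}$ (from $(A_2)$), while the drift factor grows by at most $2L+1$ on $\mathcal{C}$ and decays by at most $(\zeta\xi+2L+1)/(\xi+1)$ off $\mathcal{C}$ (from $(A_1)$); raising these factors to the $a$ and $1-a$ powers and combining via H\"older produces the single rate $\rho_a$.

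\textbf{One-step contraction.} Couple $X' = f(x)$ and $Y' = f(y)$ using a common random element driving the differentiable mapping $f$. Convexity of $\X$ and the fundamental theorem of calculus along the segment $s \mapsto x+s(y-x)$ give $\norm{f(x)-f(y)} \le \int_0^1 \norm{\tfrac{\df}{\df s} f(x+s(y-x))}\,\df s$; Tonelli and $(A_2)$ then yield $\mathrm{E}\norm{X'-Y'} \le \gamma\norm{x-y}$ on $\mathcal{C}$ and $\gamma_0\norm{x-y}$ off it. Adding $(A_1)$ at $x$ and at $y$ gives $\mathrm{E}[V(X')+V(Y')+1] \le \zeta(V(x)+V(y)) + 2L + 1$; since $u \mapsto (\zeta u + 2L+1)/(u+1)$ is decreasing (using $\zeta < 1 \le 2L+1$), this ratio is bounded by $2L+1$ on $\mathcal{C}$ (worst case $u=0$) and by $(\zeta\xi+2L+1)/(\xi+1)$ off $\mathcal{C}$ (worst case $u=\xi$); the latter is strictly less than $1$ precisely because $\xi > 2L/(1-\zeta)$. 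Applying H\"older in the form $\mathrm{E}[A^{a}B^{1-a}] \le (\mathrm{E} A)^{a}(\mathrm{E} B)^{1-a}$ with $A = \norm{X'-Y'}$ and $B = V(X')+V(Y')+1$ then delivers $\mathrm{E}\tilde d(X',Y') \le \rho_a\,\tilde d(x,y)$.

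\textbf{From contraction to a bound to stationarity.} I would use the triangle inequality to bound $d_W(K_x^n,\Pi) \le \sum_{m=n}^{\infty} d_W(K_x^m,K_x^{m+1})$ (the tail will be summable, forcing the sequence to be Cauchy in Wasserstein with limit $\Pi$), and estimate each summand by coupling the chain with its one-step shift: set $(X_0,Y_0) = (x,f(x))$ and evolve both coordinates using a common random element at each step (independent across time), so that $(X_m,Y_m) \sim (K_x^m, K_x^{m+1})$. Iterating the one-step contraction via the Markov property gives $\mathrm{E}\tilde d(X_m, Y_m) \le \rho_a^m\,\mathrm{E}\tilde d(x,f(x))$. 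The norm-drift inequality $c^{-1}\norm{x-y} \le V(x)+V(y)+1$ rewrites as the pair of comparisons $\norm{x-y} \le c^{1-a}\tilde d(x,y)$ and $\tilde d(x,y) \le c^{a}(V(x)+V(y)+1)$; combined with $\mathrm{E} V(f(x)) \le \zeta V(x) + L$, these give $d_W(K_x^m, K_x^{m+1}) \le c\,\rho_a^m\,((\zeta+1)V(x)+L+1)$, and summing the geometric series over $m \ge n$ yields the stated bound.

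\textbf{Main obstacle.} The genuinely delicate step is the algebraic verification that $\rho_a < 1$ under $(A_3)$, and that the admissible range for $a$ in the statement is non-empty. Taking logs, $\gamma^{a}(2L+1)^{1-a} < 1$ requires $a > \log(2L+1)/[\log(2L+1)-\log\gamma]$, and when $\gamma_0 > 1$, $\gamma_0^{a}((\zeta\xi+2L+1)/(\xi+1))^{1-a} < 1$ requires $a < -\log[(\zeta\xi+2L+1)/(\xi+1)]/\{\log\gamma_0 - \log[(\zeta\xi+2L+1)/(\xi+1)]\}$; condition $(A_3)$ is precisely the compatibility of these two thresholds. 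A minor technicality is that $\tilde d$ is only a semimetric (triangle inequality may fail), but this is never used, because the Wasserstein triangle inequality is invoked only at the level of the genuine metric $\norm{\cdot}$ through an explicit coupling.
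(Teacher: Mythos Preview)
This theorem is not proved in the paper; it is quoted as a result of Qin and Hobert and then applied as a black-box tool in the proof of the main proposition. Your sketch is correct and is essentially the argument in the cited reference: the interpolated functional $\tilde d(x,y)=\norm{x-y}^{a}(V(x)+V(y)+1)^{1-a}$, H\"older to combine the contraction in $(A_2)$ with the drift in $(A_1)$ into a uniform one-step bound $\mathrm{E}\,\tilde d(f(x),f(y))\le\rho_a\,\tilde d(x,y)$, and then the telescoping coupling $(X_0,Y_0)=(x,f(x))$ together with the two comparison inequalities $\norm{x-y}\le c^{1-a}\tilde d(x,y)$ and $\tilde d(x,y)\le c^{a}(V(x)+V(y)+1)$ to recover the Euclidean Wasserstein bound with exactly the constant $c\,[(\zeta+1)V(x)+L+1]/(1-\rho_a)$ stated.
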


We refer to conditions $(A_1)$ and $(A_2)$ as the \textit{drift} and
\textit{contraction} conditions, respectively.  In the next section,
we apply Theorems~\ref{thm:ms} and \ref{thm:qh_2021a} to $\Gamma$.

\section{Main Result}
\label{sec:main}

In order to state our main result, we must introduce a few definitions
and some notation.  Let $K(\eta,\cdot)$ denote the Mtk corresponding
to the Mtd $k(\eta,\cdot)$ defined at \eqref{eq:mtd}, and let
$K^n_\eta(\cdot)$ denote the probability measure associated with the
$n$-step Mtd $k^{(n)}(\eta,\cdot)$.  So, for a Borel set $B \in
\mathbb{R}^{p+q+1}$, $K(\eta,B) = \int_B k(\eta,\tilde{\eta}) \,
d\tilde{\eta}$ and $K^n_\eta(B) = \int_B k^{(n)}(\eta,\tilde{\eta}) \,
d\tilde{\eta}$.  Also let $\Pi(\cdot)$ denote the invariant
probability measure of $\Gamma$, i.e., $\Pi(\cdot)$ is the probability
measure corresponding to the marginal posterior density $\pi(\eta \, |
\, Y)$.  (The conditions that we will impose in our main result imply
posterior propriety.)  Define $V: \mathbb{R}^{p+q+1} \rightarrow [0,
  \infty)$ as follows:
\begin{equation}
  \label{eq:df}
  V(\eta) = \frac{1}{\rb q} \sum_{i=1}^{q} \sum_{j=1}^{r_i} \big(
  \bar{y} - x_{ij}^\intercal \eta_{00}/\sqrt{q} \big)^2 +
  \frac{\eta_0^2}{q} + \frac{1}{\rb q} \sum_{i=1}^{q} r_i (\eta_i +
  \bar{y} - \bar{y}_i)^2 \;,
\end{equation}
where $\bar{y} = q^{-1} \sum_{i=1}^q \bar{y}_i$ and
$\rb = q^{-1} \sum_{i=1}^q r_i$.
This will serve as
our drift function.

For two non-negative functions $g(z)$ and $h(z)$, we write $g(z) =
\mathcal{O}(h(z))$ if there exist positive numbers $c$ and $z_0$ such
that $g(z) \le c h(z)$ for all $z \ge z_0$.  We write $g(z) =
\Theta(h(z))$ if $g(z) = \mathcal{O}(h(z))$ and $h(z) =
\mathcal{O}(g(z))$.  Let $r_{\max} = \max\{r_1, \dots, r_q\}$ and
$r_{\min} = \min\{r_1, \dots, r_q\}$.
% Note that $\rb = q^{-1} \sum_{i=1}^q r_i$, hence $N = q \rb$.
Let $\lambda_{\min}(\cdot)$ denote the
smallest eigenvalue of the (square symmetric matrix) argument, and
define $\lambda_{\max}(\cdot)$ analogously.

We envision a sequence of growing data sets with $q$, $p = p(q)$, and
$r_{\min} = r_{\min}(q)$ all diverging.  Our basic assumptions about
the manner in which the data set grows are as follows.
\begin{enumerate}[label=(\subscript{B}{{\arabic*}})]%[$({B}_1)$]
  \item There exists a positive constant $m$, not depending on $q$,
    such that for all $q$,
\[
\frac{r_{\max}}{r_{\min}} \leq m \;.
\]
  \item  There exist positive constants $k_1$ and $k_2$, not depending
    on $q$, such that for all large $q$,
\[
  k_1 \leq \lambda_{\min} \Big[ \frac{1}{\bar{r}q} \big( X^\intercal X -
    \bar{X}^\intercal \bar{X} \big) \Big] \leq \lambda_{\max} \Big(
  \frac{1}{\bar{r}q} X^\intercal X \Big) \leq k_2 \,.
\]
  \item There exists a positive constant $\ell$, not depending on $q$,
    such that for all $q$,
\[
\frac{1}{\bar{r} q} \sum_{i=1}^q \sum_{j=1}^{r_i} y_{ij}^2 \leq \ell
\;.
\]
\item $p = \mathcal{O}(q)$.
\item There exists a positive constant $\delta$, not depending on $q$,
  such that
\[
  \frac{\bar{r}}{q^{2+\delta}} \rightarrow \infty
\]
as $q \rightarrow \infty$.
\end{enumerate}
Here is our main result:

\begin{theorem}
  \label{thm:main}
  Under conditions $(B_1)$-$(B_5)$, there exist positive constants $C$,
  $C'$ and $L_0$ (all independent of $q$), a rate $\rho = \rho(q)$
  satisfying $\rho(q) \rightarrow 0$ as $q \rightarrow \infty$, and a
  positive integer $q_0$ such that, for all $q \ge q_0$, all $\eta \in
  \mathbb{R}^{p+q+1}$, and all $n \in \{2,3,4,\dots\}$,
\[
d_{\mbox{\scriptsize{TV}}}(K^n_\eta,\Pi) \le C q \bar{r}^{3/2}
d_{\mbox{\scriptsize{W}}}(K^{n-1}_\eta, \Pi) \le C' q^2 \bar{r}^{3/2}
\big( V(\eta) + L_0 \big) \rho^{n-1} \;.
\]
\end{theorem}

\begin{remark}
  \label{rem:0}
  It follows from \eqref{eq:ge} that if $\rho = 0$, the Markov chain becomes stationary after one iteration (no matter where it is started).
  Given such a chain, one could produce an exact draw from $\Pi$ by running a single iteration (from any starting point).
  According to Theorem~\ref{thm:main}, the geometric convergence rates of our sequence of Gibbs samplers converge to 0 as $q \rightarrow \infty$.
  Unfortunately, this does not imply that a single iteration (or even several iterations) of the Gibbs sampler will provide an exact (or even a near exact) draw from the posterior distribution, even when the underlying data set is extremely large.
  Indeed, Theorem~\ref{thm:main} states that the total variation distance to stationarity after $n$ iterations of the Gibbs sampler is bounded above by $C' q^2 \bar{r}^{3/2} (V(\eta) + L_0) \rho^{n-1}$, where $\rho \in (0,1)$.
  Even if $\rho$ is very small, the term $C' q^2 \bar{r}^{3/2} (V(\eta) + L_0)$ may be very large - so large that $n$ needs to be large before the entire upper bound on the total variation distance to stationarity is small.
  The point is this: While we are able to say that $\rho_* \rightarrow 0$ as the sample size increases, we are not able to parlay this into a statement about fixed Gibbs samplers approaching stationarity in one (or even a few) iterations.
\end{remark}

We will prove Theorem~\ref{thm:main} using Theorems~\ref{thm:ms}
and \ref{thm:qh_2021a}.  We begin by stating three standalone results
(each proved in a subsequent section) that we will use in the proof.
The first allows for conversion (Theorem~\ref{thm:ms}), and the other
two concern drift and contraction (Theorem~\ref{thm:qh_2021a}).  Here
is the conversion condition, which is established using
Theorem~\ref{thm:ms} in Section~\ref{sec:conversion}.

\begin{proposition}
  \label{prop:conversion}
  Under conditions $(B_1)$ and $(B_2)$, there exist a constant $C > 0$
  and a positive integer $q_0$ such that, for all $q \ge q_0$,
\[
d_{\mbox{\scriptsize{TV}}}(K_\eta^n, \Pi) \leq C q \bar{r}^{3/2}
d_{\mbox{\scriptsize{W}}}(K_\eta^{n-1}, \Pi)
\]
for all $\eta \in \mathbb{R}^{p+q+1}$ and all $n \in \{2,3,4,\dots\}$.
\end{proposition}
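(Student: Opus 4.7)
The plan is to verify the hypothesis of Theorem~\ref{thm:ms} with a Lipschitz constant that grows no faster than $q\bar{r}^{3/2}$: I will show that there is a constant $C_0 > 0$ independent of $q$ and a threshold $q_0$ such that, for all $q \geq q_0$ and all $\eta, \eta' \in \mathbb{R}^{p+q+1}$,
\[
\int_{\mathbb{R}^{p+q+1}} |k(\eta, \tilde\eta) - k(\eta', \tilde\eta)| \, d\tilde\eta \leq C_0 \, q\bar{r}^{3/2} \, \|\eta - \eta'\|.
\]
Plugging this into Theorem~\ref{thm:ms} (with $C = C_0 q\bar{r}^{3/2}$) immediately yields the proposition, with the factor of $1/2$ absorbed into a relabelled constant.

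The first simplification uses the mixture representation \eqref{eq:mtd}. Since $\pi_1(\tilde\eta \mid \lambda, \tau, y)$ is independent of $\eta$, Fubini and the triangle inequality give
\[
\int |k(\eta, \tilde\eta) - k(\eta', \tilde\eta)| \, d\tilde\eta \leq \int |\pi_2(\lambda, \tau \mid \eta, y) - \pi_2(\lambda, \tau \mid \eta', y)| \, d\lambda \, d\tau.
\]
Steps~1 and~2 of the algorithm show that, conditional on $(\eta, y)$, $\lambda$ and $\tau$ are independent Gamma variates; let $g_\lambda(\cdot; \eta)$ and $g_\tau(\cdot; \eta)$ denote their densities. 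Applying $|fg - f'g'| \leq f|g - g'| + g'|f - f'|$ and integrating bounds the right-hand side by $\|g_\lambda(\cdot; \eta) - g_\lambda(\cdot; \eta')\|_1 + \|g_\tau(\cdot; \eta) - g_\tau(\cdot; \eta')\|_1$. The key Gamma estimate is the ``logarithmic'' Lipschitz bound
\[
\|g_{\alpha, \beta_1} - g_{\alpha, \beta_2}\|_1 \leq \sqrt{\alpha} \, |\log \beta_1 - \log \beta_2|,
\]
which follows from $\int |\partial_\beta g_{\alpha, \beta}(x)| \, dx = \mathrm{E}_{X \sim \mathrm{Gamma}(\alpha, \beta)} |X - \alpha/\beta| \leq \sqrt{\alpha}/\beta$ (Jensen) and integrating along a path in $\log \beta$.

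It remains to bound $\|\nabla_\eta \log \beta_\lambda(\eta)\|$ and $\|\nabla_\eta \log \beta_\tau(\eta)\|$ \emph{uniformly} in $\eta$. Writing $\beta_\lambda(\eta) = b_1 + \tfrac{1}{2} \|A \eta\|^2$ for the linear map $A: \eta \mapsto (\eta_i - \eta_0/\sqrt{q})_{i=1}^q$, a routine computation gives $\|A\|^2 = 2$; combined with $\nabla \beta_\lambda = A^\intercal(A\eta)$ and $\|A\eta\|^2 = 2(\beta_\lambda - b_1)$, this produces $\|\nabla \beta_\lambda\| \leq 2\sqrt{\beta_\lambda}$ and thus $\|\nabla \log \beta_\lambda\| \leq 2/\sqrt{b_1}$. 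Analogously, $\beta_\tau(\eta) = b_2 + \tfrac{1}{2}\|y - B\eta\|^2$ for the linear map $B$ with $(B\eta)_{ij} = x_{ij}^\intercal \eta_{00}/\sqrt{q} + \eta_i$; decomposing $BB^\intercal = XX^\intercal/q + ZZ^\intercal$ (with $Z$ the $N \times q$ group-membership indicator), conditions $(B_2)$ and $(B_1)$ give $\lambda_{\max}(X^\intercal X) \leq k_2 q\bar{r}$ and $\lambda_{\max}(ZZ^\intercal) = r_{\max} \leq m\bar{r}$, hence $\|B\|^2 \leq (k_2 + m)\bar{r}$. Using $\nabla \beta_\tau = -B^\intercal(y - B\eta)$ and $\|y - B\eta\|^2 = 2(\beta_\tau - b_2)$ gives $\|\nabla \beta_\tau\| \leq \|B\|\sqrt{2\beta_\tau}$, and therefore $\|\nabla \log \beta_\tau\| \leq \sqrt{2(k_2+m)\bar{r}/b_2}$. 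Since $\alpha_\lambda = \Theta(q)$ and $\alpha_\tau = \Theta(q\bar{r})$, integrating the two gradient bounds along the segment $[\eta, \eta']$ produces a $\lambda$-contribution of order $\sqrt{q}\,\|\eta - \eta'\|$ and a $\tau$-contribution of order $\sqrt{q\bar{r}} \cdot \sqrt{\bar{r}}\,\|\eta - \eta'\| = \sqrt{q}\,\bar{r}\,\|\eta - \eta'\|$, both of which are dominated by $C_0 q\bar{r}^{3/2}\|\eta - \eta'\|$ for $q$ sufficiently large.

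The step that I expect to require the most care is the operator-norm bound $\|B\|^2 \leq C\bar{r}$: conditions $(B_1)$ and $(B_2)$ must be invoked in tandem, and it is precisely the decomposition $BB^\intercal = XX^\intercal/q + ZZ^\intercal$ that converts the two growth conditions into a single clean spectral bound on $B$. Without this step, $\|\nabla \beta_\tau\|$ would be unbounded in $\eta$ and the Gamma $L^1$-Lipschitz estimate, even after the logarithmic trick, would fail to translate into a globally Lipschitz bound on $\eta$.
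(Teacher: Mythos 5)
Your proof is correct, but it takes a genuinely different route from the paper, and a cleaner and sharper one at that. The paper also begins by reducing via Theorem~\ref{thm:ms} and the mixture representation to the $L^1$ distance between the two Gamma conditionals, but it then proceeds by an explicit crossing-point computation: writing the $L^1$ distance as $2\int_0^u(\pi_{22}(\tau\mid\eta')-\pi_{22}(\tau\mid\eta))\,d\tau$ for the unique crossing $u$, bounding it by $2[(\beta'/\beta)^\alpha-1]$, bounding $\beta'-\beta$ via Cauchy--Schwarz, applying the mean-value theorem to the map $x\mapsto(1+2\sqrt{c_2\bar r/b_2}\,x+c_2\bar r x^2/b_2)^\alpha-1$ only for $\norm{\eta-\eta'}<\bar r^{-3/2}q^{-1}$, controlling the derivative at that scale by a delicate limit computation, and finally extending to arbitrary pairs by a subdivision of the segment $[\eta,\eta']$ into pieces of length under the threshold. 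Your logarithmic Lipschitz estimate $\norm{g_{\alpha,\beta_1}-g_{\alpha,\beta_2}}_1\le\sqrt{\alpha}\,|\log\beta_1-\log\beta_2|$ (via $\int|\partial_\beta g_{\alpha,\beta}|\,dx=\mathrm{E}|X-\alpha/\beta|\le\sqrt{\alpha}/\beta$ and integrating in $\log\beta$) is \emph{globally} valid, so together with the uniform gradient bounds $\norm{\nabla\log\beta_\lambda}\le 2/\sqrt{b_1}$ and $\norm{\nabla\log\beta_\tau}\le\sqrt{2(k_2+m)\bar r/b_2}$ you get a Lipschitz constant for all pairs $(\eta,\eta')$ at once, with no subdivision step and no L'H\^opital computation. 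Your spectral decomposition $BB^\intercal=XX^\intercal/q+ZZ^\intercal$ together with $\lambda_{\max}(X^\intercal X)\le k_2 q\bar r$ from $(B_2)$ and $\lambda_{\max}(ZZ^\intercal)=r_{\max}\le m\bar r$ from $(B_1)$ replaces the paper's Cauchy--Schwarz bound on $\beta'-\beta$. Notably, your argument yields the constant $\mathcal{O}(\sqrt{q}\vee\sqrt{q}\,\bar r)=\mathcal{O}(\sqrt{q}\,\bar r)$, which improves the proposition's $\mathcal{O}(q\bar r^{3/2})$ by a factor of $\sqrt{q\bar r}=\sqrt{N}$; the proposition as stated of course follows \emph{a fortiori}. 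The one place worth spelling out in a write-up is the identity $\|A\eta\|^2=2(\beta_\lambda-b_1)$ and its analogue for $\tau$, which is the hinge that makes $\nabla\log\beta$ bounded; you state it, and it is correct since $AA^\intercal=I_q+q^{-1}1_q1_q^\intercal$ has top eigenvalue $2$.
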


Here is the drift condition, which is established in
Section~\ref{sec:drift}.

\begin{proposition}
  \label{prop:drift}
  Under conditions $(B_1)$-$(B_5)$, there exist $\zeta = \zeta(q)
  = \mathcal{O}(q^{-1})$, a constant $L$ independent of $q$, and a positive integer $q_0$ such that for all $q \ge q_0$ and all $\eta \in
  \mathbb{R}^{p+q+1}$, we have
\begin{equation*}
  \int_{\mathbb{R}^{p+q+1}} V(\tilde{\eta}) K(\eta, d\tilde{\eta})
  \leq \zeta V(\eta) + L \;.
\end{equation*}
\end{proposition}

The contraction condition involves a random mapping that induces
$\Gamma$, and we now describe this mapping.  (We note that this
mapping is also exploited in the proof of
Proposition~\ref{prop:drift}.)  Let $N_{00} \sim \mathrm{N}_p(0, I)$,
$N_i \sim \mathrm{N}(0,1)$, $i = 0,\dots, q$, $J_1 \sim
\mathrm{Gamma}(q/2+a_1, 1)$, and $J_2 \sim \mathrm{Gamma}(N/2+a_2,
1)$, and assume these are all pairwise independent.  Denote the
current state of $\Gamma$ as $\eta = (\eta_{00}^\intercal, \eta_0,
\eta_1, \dots, \eta_q)^{\intercal}$.  Then the next state
$\tilde{\eta} = (\tilde{\eta}_{00}^\intercal, \tilde{\eta}_0,
\tilde{\eta}_1,\dots, \tilde{\eta}_p)^{\intercal}$ can be expressed as
the following random mapping:
\[
f(\eta) = \tilde{\eta}(\eta) =
\begin{bmatrix}
\tilde{\eta}_{00}^{(\eta)} \\
\tilde{\eta}_0^{(\eta)} \\
\tilde{\eta}_1^{(\eta)} \\
\vdots \\
\tilde{\eta}_q^{(\eta)} \\
\end{bmatrix}
\]
where
\begin{align*}
\tilde{\eta}_{00}^{(\eta)} & = v^{(\eta)} +
\sqrt{\frac{q}{\tau^{(\eta)}}} \big(Q^{(\eta)}\big)^{\frac{1}{2}}
N_{00} \\ \tilde{\eta}_0^{(\eta)} & = \sqrt{q} \frac{\sum_{i=1}^{q}
  (\bar{y}_i - \bar{x}_i^\intercal \tilde{\eta}_{00}^{(\eta)} /
  \sqrt{q}) / z_i^{(\eta)}}{\sum_{i=1}^{q} 1/z_i^{(\eta)}} +
\sqrt{\frac{q}{\sum_{i=1}^{q} 1/z_i^{(\eta)}}} N_0
\\ \tilde{\eta}_i^{(\eta)} & = \frac{\lambda^{(\eta)}}{t_i^{(\eta)}}
\tilde{\eta}_0^{(\eta)} / \sqrt{q} + \frac{r_i
  \tau^{(\eta)}}{t_i^{(\eta)}} (\bar{y}_i - \bar{x}_i^\intercal
\tilde{\eta}_{00}^{(\eta)}/\sqrt{q}) + \sqrt{\frac{1}{t_i^{(\eta)}}}
N_i \;, \quad i = 1,2,\dots,q \;,
\end{align*}
and
\begin{align*}
\lambda^{(\eta)} & = \frac{J_1}{b_1+ \frac{1}{2}\sum_{i=1}^{q} (\eta_i
  - \eta_0/\sqrt{q})^2} \\ \tau^{(\eta)} & = \frac{J_2}{b_2 +
  \frac{1}{2}\sum_{i=1}^{q}\sum_{j=1}^{r_i} (y_{ij} - x_{ij}^\intercal
  \eta_{00} /\sqrt{q} - \eta_i)^2} \\ t_i^{(\eta)} & = r_i
\tau^{(\eta)} + \lambda^{(\eta)} \;, \quad i = 1,2,\dots,q
\\ z_i^{(\eta)} & = \frac{t_i^{(\eta)}}{r_i \tau^{(\eta)}
  \lambda^{(\eta)}} \;, \quad i = 1,2,\dots,q \\ D_c^{(\eta)} & =
\bigoplus_{i=1}^{q}\frac{r_i \tau^{(\eta)}}{t_i^{(\eta)}} I_{r_i}
\\ M^{(\eta)} & = D_c^{(\eta)} + \frac{(I - D_c^{(\eta)})1 1^\intercal
  (I - D_c^{(\eta)})}{1^\intercal (I - D_c^{(\eta)}) 1} \\ Q^{(\eta)}
& = (X^\intercal X - \bar{X}^\intercal M^{(\eta)} \bar{X})^{-1}
\\ v^{(\eta)} & = \sqrt{q} Q^{(\eta)} (X^\intercal Y -
\bar{X}^\intercal M^{(\eta)} \bar{Y}) \;.
\end{align*}

Here is the contraction condition, which is established in
Section~\ref{sec:contraction}.

\begin{proposition}
  \label{prop:contraction}
  Assume that $(B_1)$-$(B_5)$ hold, and define
\[
{\cal C} = \big \{ (\eta,\eta') \in \mathbb{R}^{p+q+1} \times
\mathbb{R}^{p+q+1} : V(\eta) + V(\eta') \le q^{\delta/3} \big \} \;,
\]
where $V(\cdot)$ is the drift function defined in \eqref{eq:df}, and
$\delta$ is given in $(B_5)$.  Let $f$ be the random mapping defined
above.  There exist
\[
\gamma = \gamma(q) = \mathcal{O} \bigg( \sqrt{\frac{q^{2 +
      \delta}}{\bar{r}}} \vee \frac{1}{\sqrt{q}} \bigg) \;\;\;\;
\mbox{and} \;\;\;\; \gamma_0 = \gamma_0(q) = \mathcal{O}(\sqrt{q})
\]
and a positive integer $q_0$ such that for all $q \ge q_0$, we have
\[
\sup_{s\in [0,1]} \mathrm{E} \, \bigg \| \frac{\df}{\df s} f(\eta +
s(\eta' - \eta)) \bigg\| \leq
      \begin{cases}
          \gamma \|\eta'-\eta\| & (\eta, \eta') \in
            C \\ \gamma_0 \|\eta'-\eta\| &\text{otherwise} \,.
      \end{cases}
\]
\end{proposition}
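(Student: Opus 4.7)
My plan is to differentiate the random mapping $f(\eta_s)$, where $\eta_s = \eta + s(\eta'-\eta)$ and $u = \eta'-\eta$, in $s$ with the auxiliary randomness $\omega = (N_{00}, N_0, N_1, \dots, N_q, J_1, J_2)$ held fixed, and then to bound each component in $L^1(P)$ by $\|u\|$ times an expression depending on $q$, $\bar{r}$, $V(\eta)$ and $V(\eta')$. The dependence of $f(\eta_s)$ on $\eta_s$ is mediated entirely by $\lambda^{(\eta_s)}$ and $\tau^{(\eta_s)}$: the quantities $t_i^{(\eta_s)}, z_i^{(\eta_s)}, D_c^{(\eta_s)}, M^{(\eta_s)}, Q^{(\eta_s)}, v^{(\eta_s)}$ are deterministic functions of this pair, and $\tilde{\eta}_0^{(\eta_s)}$ and $\tilde{\eta}_i^{(\eta_s)}$ additionally depend on $\tilde{\eta}_{00}^{(\eta_s)}$.

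The first computation is $\frac{\df}{\df s}\lambda^{(\eta_s)}$ and $\frac{\df}{\df s}\tau^{(\eta_s)}$; each is a ratio of a Gamma random variable to a quadratic form in $\eta_s$, so its derivative is $-\lambda^{(\eta_s)}$ (respectively $-\tau^{(\eta_s)}$) times an inner product linear in $u$ divided by the quadratic denominator. Cauchy--Schwarz bounds the linear factor by $\|u\|$ times a quantity of order $V(\eta_s)^{1/2}$, while the denominators are bounded below by $b_1, b_2$ and above by affine functions of $V(\eta_s)$. I then propagate to the derivatives of $t_i^{(\eta_s)}, z_i^{(\eta_s)}, D_c^{(\eta_s)}, M^{(\eta_s)}$, to $\frac{\df}{\df s}Q^{(\eta_s)}$ via $\frac{\df}{\df s}A^{-1} = -A^{-1}\dot{A}A^{-1}$, to $\frac{\df}{\df s}(Q^{(\eta_s)})^{1/2}$ via an integral representation of the matrix square root, and to $\frac{\df}{\df s}v^{(\eta_s)}$. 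Operator-norm bounds on $Q^{(\eta_s)}$ and $(Q^{(\eta_s)})^{1/2}$ come from $(B_2)$, which gives $\lambda_{\max}(Q^{(\eta_s)}) \le (\bar{r}qk_1)^{-1}$. Combining these yields the chain-rule expressions for $\frac{\df}{\df s}\tilde{\eta}_{00}^{(\eta_s)}$, $\frac{\df}{\df s}\tilde{\eta}_0^{(\eta_s)}$, and $\frac{\df}{\df s}\tilde{\eta}_i^{(\eta_s)}$; taking Euclidean norm and expectation reduces matters to moments of $J_1^{-1}$ (of order $q^{-1}$, since $J_1 \sim \mathrm{Gamma}(q/2+a_1,1)$), $J_2^{-1}$ (of order $(q\bar{r})^{-1}$, since $J_2 \sim \mathrm{Gamma}(N/2+a_2,1)$), $\|N_{00}\|$ (of order $\sqrt{p}$), and $|N_i|$ (of order $1$).

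Assembling the component bounds, together with the crude envelope $V(\eta_s)\le V(\eta)+V(\eta')+c_0$ along the segment, produces a Lipschitz coefficient of the form $C(q,\bar{r})\bigl(1+V(\eta)+V(\eta')\bigr)^{\kappa}$ for some $\kappa$. On $\mathcal{C}$, substituting $V(\eta)+V(\eta')\le q^{\delta/3}$ and invoking $(B_4)$ and $(B_5)$ collapses this constant to $\gamma = \mathcal{O}(\sqrt{q^{2+\delta}/\bar{r}}\vee 1/\sqrt{q})$; off $\mathcal{C}$, the pointwise polynomial bound (after absorbing the worst-case $V$ into the linear-in-$u$ Cauchy--Schwarz estimates) yields $\gamma_0 = \mathcal{O}(\sqrt{q})$. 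The main obstacle will be tracking the $p$-dimensional term $(Q^{(\eta_s)})^{1/2}N_{00}$ inside $\tilde{\eta}_{00}^{(\eta_s)}$: since $p = \mathcal{O}(q)$, the naive factor $\mathrm{E}\|N_{00}\| \sim \sqrt{p}$ must combine with the $1/(\bar{r}q)$ spectral decay of $Q^{(\eta_s)}$ to give a net rate compatible with the claim, and propagating $\frac{\df}{\df s}\tilde{\eta}_{00}^{(\eta_s)}$ into $\frac{\df}{\df s}\tilde{\eta}_0^{(\eta_s)}$ and then into the $q$ derivatives $\frac{\df}{\df s}\tilde{\eta}_i^{(\eta_s)}$ requires careful bookkeeping of the $1/\sqrt{q}$ factors and of the ratios $\lambda^{(\eta_s)}/t_i^{(\eta_s)}, \, r_i\tau^{(\eta_s)}/t_i^{(\eta_s)}$ so that summing the $q$ squared $\tilde{\eta}_i$-contributions does not inflate the Euclidean norm.
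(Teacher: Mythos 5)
Your high-level plan matches the paper's architecture: differentiate the random mapping in $s$ with the Gamma and Gaussian noise held fixed, push the chain rule through $\lambda, \tau, D_c, M, Q, Q^{1/2}, v$, use $(B_2)$ for operator-norm control of $Q$, and reduce to moments of $J_1^{-1}, J_2^{-1}, \|N_{00}\|^2$ and $N_i^2$. Two cosmetic differences: the paper works in $L^2$ (via Jensen on the outer norm) rather than $L^1$, which makes the Gaussian moment bookkeeping considerably cleaner, and it bounds the derivative of $Q^{1/2}$ through the direct eigenvalue inequality of Lemma~\ref{lem:sr} rather than an integral representation of the matrix square root; either device works.

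The genuine gap is in the off-$\mathcal{C}$ branch. Your intermediate target, a Lipschitz coefficient of the form $C(q,\bar{r})\,(1+V(\eta)+V(\eta'))^{\kappa}$, cannot by itself yield a finite $\gamma_0$, since $V(\eta)+V(\eta')$ is unbounded outside $\mathcal{C}$; and the cancellation you describe as ``absorbing the worst-case $V$ into the linear-in-$u$ Cauchy--Schwarz estimates'' is only half the story. The Cauchy--Schwarz step does collapse the quadratic-form factor against the Gamma denominator (giving, e.g., $(\df\lambda/\df s)^2 \le 4\lambda^3\|\alpha\|^2/J_1$ and then $\lambda \le J_1/b_1$), but the remaining $V$-dependence hides inside $\phi = \lambda/(\bar{r}\tau)$, which can still be arbitrarily large outside $\mathcal{C}$. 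The paper's central device is the quantity $\Phi = (\phi^2\lambda/J_1 + \phi\lambda/J_2)\|\alpha\|^2$, and the observation that all the derivative contributions come packaged as $\Phi/e_{\max}^k$ with $k\ge 2$, where $e_{\max} = 1 + \lambda/(r_{\max}\tau) \ge (1+\phi)/m$. The ratio $\phi^2/(1+\phi)^2$ (and $\phi/(1+\phi)^2$) is then uniformly bounded, so the $\phi$-dependence cancels \emph{outside} the Cauchy--Schwarz step, in the $e_{\max}^{-k}$ factors that arise from differentiating $D_c$, $M$ and $z_i$. Without recognizing this specific algebraic structure, the off-$\mathcal{C}$ estimate will not close, and you will not obtain a $V$-free $\gamma_0 = \mathcal{O}(\sqrt{q})$. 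You would need to make this cancellation explicit before the rest of the bookkeeping (the $p = \mathcal{O}(q)$ issue and the $\sum_i$-over-$\tilde{\eta}_i$ accounting, which you correctly flag) can be completed.
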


\begin{remark}
  \label{rem:2}
  Under $(B_5)$, $\lim_{q \to \infty} \gamma(q) = 0$.
\end{remark}

\begin{proof}[Proof of Theorem~\ref{thm:main}.]

We will show that there exist positive constants $C''$ and $L_0$ (both
independent of $q$), a rate $\rho = \rho(q)$ satisfying $\rho(q)
\rightarrow 0$ as $q \rightarrow \infty$, and a positive integer $q_0$
such that, for all $q \ge q_0$, all $\eta \in \mathbb{R}^{p+q+1}$, and
all $n \in \{2,3,4,\dots\}$,
\begin{equation}
  \label{eq:all_we_need}
  d_{\mbox{\scriptsize{W}}}(K^n_\eta, \Pi) \le C'' q \big( V(\eta) +
  L_0 \big) \rho^n \;.
\end{equation}
The result then follows immediately from
Proposition~\ref{prop:conversion}.  The Wasserstein bound in
\eqref{eq:all_we_need} will be established using
Theorem~\ref{thm:qh_2021a} in conjunction with
Propositions~\ref{prop:drift} and \ref{prop:contraction}.  We begin by
showing that assumption $(A_1)$ holds.  Proposition~\ref{prop:drift}
provides the inequality \eqref{eq:drift_drift}, and shows that we can
choose $\zeta = \zeta(q) = \Theta(q^{-1})$ and $L = L(q) =
\Theta(1)$.  However, we still need to establish the second
inequality, \eqref{eq:norm_drift}, which relates the drift function to
the Euclidean norm.  Fix $(\eta,\eta') \in \mathbb{R}^{q+p+1} \times
\mathbb{R}^{q+p+1}$.  We have
\begin{align*}
  \norm{\eta - \eta'}
  \le \norm{\eta - \eta'}^2 + 1
  =
  (\eta_{00} - \eta'_{00})^\intercal (\eta_{00} - \eta'_{00}) +
  (\eta_0 - \eta'_0)^2 + \sum_{i=1}^q (\eta_i - \eta'_i)^2 + 1
  \;.
\end{align*}
For two non-negative definite matrices, $M_1$ and $M_2$, we write $M_1 \preccurlyeq M_2$ if $M_2 - M_1$ is non-negative definite.
Now, $X^\intercal X - \bar{X}^\intercal \bar{X} \preccurlyeq
X^\intercal X$, so $\lambda_{\min}(X^\intercal X - \bar{X}^\intercal
\bar{X}) \le \lambda_{\min}(X^\intercal X)$.
Hence,
\begin{align*}
  & (\eta_{00} - \eta'_{00})^\intercal (\eta_{00} - \eta'_{00})
  \\ \le &
  \frac{1}{\lambda_{\min} \big( \frac{1}{\rb q} X^\intercal X \big)}
  (\eta_{00} - \eta'_{00})^\intercal \Big( \frac{1}{\rb q} X^\intercal
  X \Big) (\eta_{00} - \eta'_{00})
  \\ \le &  \frac{1}{\lambda_{\min}
    \big( \frac{1}{\rb q} (X^\intercal X - \bar{X}^\intercal \bar{X})
    \big)}  (\eta_{00}/\sqrt{q} - \eta'_{00}/\sqrt{q})^\intercal \Big(
  \frac{1}{\rb} \sum_{i=1}^q \sum_{j=1}^{r_i} x_{ij} x_{ij}^\intercal
  \Big) (\eta_{00}/\sqrt{q} - \eta'_{00}/\sqrt{q}) \\ \le &
  \frac{1}{\rb k_1} \sum_{i=1}^q \sum_{j=1}^{r_i} \big(
  x_{ij}^\intercal \eta_{00}/\sqrt{q} - x_{ij}^\intercal
  \eta'_{00}/\sqrt{q} \big)^2
  \\ \le &  \frac{2}{\rb k_1} \sum_{i=1}^q
  \sum_{j=1}^{r_i} \big(\bar{y} - x_{ij}^\intercal \eta_{00}/\sqrt{q}
  \big)^2 + \frac{2}{\rb k_1} \sum_{i=1}^q \sum_{j=1}^{r_i}
  \big(\bar{y} - x_{ij}^\intercal \eta'_{00}/\sqrt{q} \big)^2 \;,
\end{align*}
where the third inequality follows from $(B_2)$.  Continuing, we have
\begin{align*}
\sum_{i=1}^q (\eta_i - \eta'_i)^2
& \le 2 \sum_{i=1}^q (\eta_i +
\bar{y} - \bar{y}_i)^2 + 2 \sum_{i=1}^q (\eta'_i + \bar{y} -
\bar{y}_i)^2 \\
& \le \frac{2}{r_{\min}} \sum_{i=1}^q r_i (\eta_i +
\bar{y} - \bar{y}_i)^2 + \frac{2}{r_{\min}} \sum_{i=1}^q r_i (\eta'_i
+ \bar{y} - \bar{y}_i)^2 \\
& \le \frac{2m}{\rb} \sum_{i=1}^q r_i
(\eta_i + \bar{y} - \bar{y}_i)^2 + \frac{2m}{\rb } \sum_{i=1}^q r_i
(\eta'_i + \bar{y} - \bar{y}_i)^2 \;.
\end{align*}
So we conclude that
\begin{align*}
  & \norm{\eta - \eta'} \\
  \le &  (\eta_{00} - \eta'_{00})^\intercal
  (\eta_{00} - \eta'_{00}) + (\eta_0 - \eta'_0)^2 + \sum_{i=1}^q
  (\eta_i - \eta'_i)^2 + 1 \\ \le &  \frac{2}{\rb k_1}
  \sum_{i=1}^q \sum_{j=1}^{r_i} \big(\bar{y} - x_{ij}^\intercal
  \eta_{00}/\sqrt{q} \big)^2 + \frac{2}{\rb k_1} \sum_{i=1}^q
  \sum_{j=1}^{r_i} \big(\bar{y} - x_{ij}^\intercal \eta'_{00}/\sqrt{q}
  \big)^2 + 2\eta_0^2 + 2(\eta'_0)^2
  \\ &  + \frac{2m}{\rb}
  \sum_{i=1}^q r_i (\eta_i + \bar{y} - \bar{y}_i)^2 + \frac{2m}{\rb }
  \sum_{i=1}^q r_i (\eta'_i + \bar{y} - \bar{y}_i)^2 + 1 \\ = &
  \mathcal{O}(q) \big[ V(\eta) + V(\eta') + 1 \big] \;.
\end{align*}
Thus, \eqref{eq:norm_drift} is satisfied when $q$ is large, and we can
take $c$ in \eqref{eq:norm_drift} to be $c_1 q$, where $c_1$ is a
constant (not depending on $q$).

Now since $\zeta = \zeta(q) = \Theta(q^{-1})$ and $L = L(q) =
\Theta(1)$, it's clear that $q^{\delta/3} > 2L/(1-\zeta)$ for
all large $q$.  Thus, Proposition~\ref{prop:contraction} implies that
$(A_2)$ holds with $\xi = q^{\delta/3}$ when $q$ is large.  We now
show that $(A_3)$ holds when $q$ is large.  If $\gamma_0 \leq 1$, then
there's nothing to prove, so we assume that $\gamma_0 > 1$.  Define
\[
\mbox{LHS} = \frac{\log(2L + 1)}{\log(2L + 1) - \log(\gamma)} \;,
\]
and
\[\mbox{RHS} = \frac{-\log \big[ (\zeta \xi + 2L +
    1) / (\xi+1) \big]}{\log(\gamma_0) - \log \big[ (\zeta \xi + 2L
    + 1) / (\xi+1) \big]} \,.
\]
We must show that, for large $q$, $\mbox{LHS} < \mbox{RHS}$.  Since $L
\ge 0$ and $L = \Theta(1)$, it follows that $\log(2L+1) \ge 0$ and
$\log(2L+1) = \Theta(1)$.  Remark~\ref{rem:2} implies that
$\log(\gamma) \rightarrow - \infty$ as $q \rightarrow \infty$.  Hence,
$\mbox{LHS} \rightarrow 0$ as $q \rightarrow \infty$.  Next we show
that $\mbox{RHS}$ is bounded from below by a positive constant.  It's
clear from $(B_5)$ that we can assume (without loss of generality)
that $\delta<3$, and it follows that
\[
\frac{\zeta \xi + 2L + 1}{\xi + 1} = \frac{\zeta q^{\delta/3} + 2L
  + 1}{q^{\delta/3} + 1} = \Theta \Big( \frac{1}{q^{\delta/3}} \Big)
\;.
\]
Then since $\gamma_0 = \mathcal{O}(\sqrt{q})$, we have that, for large
$q$, $\mbox{RHS} \ge g(q)$, where $g(q) \rightarrow \frac{2 \delta}{3 + 2 \delta}$ as $q \rightarrow \infty$.  We conclude that, for large
$q$, $\mbox{LHS} < \mbox{RHS}$, and condition $(A_3)$ is satisfied.
According to Theorem~\ref{thm:qh_2021a}, we can take the rate to be
\[
\rho_a = \big[ \gamma^a (2L + 1)^{1-a} \big] \vee \Big[ \gamma_0^a
  \Big( \frac{\zeta \xi + 2L + 1}{\xi + 1} \Big)^{1-a} \Big]
\]
for any $a \in (\mbox{LHS},\mbox{RHS})$.  We now demonstrate that
there exists an $a \in (\mbox{LHS},\mbox{RHS})$ for which $\rho_a
\rightarrow 0$ as $q \rightarrow \infty$.  Clearly, for any fixed $a
\in (0,1)$, $\gamma^a (2L + 1)^{1-a} \rightarrow 0$ as $q \rightarrow
\infty$.  Moreover,
\[
  \gamma_0^a \Big( \frac{\zeta \xi + 2L + 1}{\xi + 1} \Big)^{1-a} =
  \Big[ \mathcal{O}(\sqrt{q}) \Big]^a \bigg[ \Theta \Big(
    \frac{1}{q^{\delta/3}} \Big) \bigg]^{1-a} = \mathcal{O} \big(
  q^{(\frac{\delta}{3} + \frac{1}{2})a - \frac{\delta}{3}} \big) \;.
\]
Thus, as long as $a < \frac{2 \delta}{3 + 2 \delta}$, we have
\[
  \gamma_0^a \Big( \frac{\zeta \xi + 2L + 1}{\xi + 1} \Big)^{1-a}
  \rightarrow 0
\]
as $q \rightarrow \infty$.  Thus, for such a value of $a$, $\rho_a
\rightarrow 0$ as $q \rightarrow \infty$.  Finally, for $q$ large
enough, there exists a constant $L_0$ such that, for all $\eta \in
\mathbb{R}^{p+q+1}$, we have
\[
\Big( \frac{(\zeta + 1)V(\eta) + L + 1}{1 - \rho_a} \Big) \le 2
\big( V(\eta) + L_0 \big) \;.
\]
This analysis, in conjunction with Theorem~\ref{thm:qh_2021a}, shows
that there exists a positive integer $q_0$ such that, for all $q \ge
q_0$, all $\eta \in \mathbb{R}^{p+q+1}$, and all $n \in
\{2,3,4,\dots\}$,
\[
d_W(K^n_\eta, \Pi) \leq c_1 q \Big( \frac{(\zeta + 1)V(\eta) + L + 1}{1
  - \rho} \Big) \rho^n \leq 2 c_1 q \big( V(\eta) + L_0 \big) \rho^n
\;,
\]
and $\rho = \rho(q) \rightarrow 0$ as $q \rightarrow \infty$.
\end{proof}

\section{Conversion Condition: Proof of Proposition~\ref{prop:conversion}}
\label{sec:conversion}

In this section, we use Theorem~\ref{thm:ms} to prove
Proposition~\ref{prop:conversion}, which is restated here for
convenience.

\begin{proposition}
  Under conditions $(B_1)$ and $(B_2)$, there exist a constant $C > 0$
  and a positive integer $q_0$ such that, for all $q \ge q_0$,
\[
d_{\mbox{\scriptsize{TV}}}(K_\eta^n, \Pi) \leq C \bar{r}^{3/2} q \,
d_{\mbox{\scriptsize{W}}}(K_\eta^{n-1}, \Pi)
\]
for all $\eta \in \mathbb{R}^{p+q+1}$ and all $n \in \{2,3,4,\dots\}$.
\end{proposition}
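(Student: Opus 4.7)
The plan is to invoke Theorem~\ref{thm:ms}; concretely, I will establish the Lipschitz-in-$\eta$ estimate $\int |k(\eta,\tilde\eta) - k(\eta',\tilde\eta)| \, d\tilde\eta \le C\,q\,\bar{r}^{3/2}\,\|\eta-\eta'\|$ for all $\eta,\eta'$ and all sufficiently large $q$. The structural observation driving the reduction is that in \eqref{eq:mtd}, the factor $\pi_1(\tilde\eta\,|\,\lambda,\tau,y)$ is free of $\eta$, so Fubini together with the fact that $\pi_1$ integrates to one gives
\[
\int |k(\eta,\tilde\eta) - k(\eta',\tilde\eta)|\, d\tilde\eta \;\le\; \int_0^\infty\!\!\int_0^\infty |\pi_2(\lambda,\tau\,|\,\eta,y) - \pi_2(\lambda,\tau\,|\,\eta',y)|\, d\lambda\, d\tau \;.
\]
Since $\pi_2(\lambda,\tau\,|\,\eta,y) = f_{\alpha_1,\beta_1(\eta)}(\lambda)\,f_{\alpha_2,\beta_2(\eta)}(\tau)$ is a product of two Gamma densities with $\eta$-independent shapes $\alpha_1 = q/2 + a_1$, $\alpha_2 = N/2 + a_2$ and $\eta$-dependent rates $\beta_1(\eta) = b_1 + \frac{1}{2}\sum_i(\eta_i - \eta_0/\sqrt{q})^2$, $\beta_2(\eta) = b_2 + \frac{1}{2}\sum_{ij}(y_{ij} - x_{ij}^\intercal \eta_{00}/\sqrt{q} - \eta_i)^2$, the product inequality $|f_1 g_1 - f_2 g_2| \le |f_1 - f_2|\,g_1 + f_2\,|g_1 - g_2|$ reduces the task to bounding two separate one-dimensional Gamma $L^1$ differences (one in $\lambda$ and one in $\tau$).

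For each Gamma component with fixed shape $\alpha$ and varying rate $\beta(\eta)$, I would use a path argument along $\eta_s = \eta + s(\eta' - \eta)$. Since $\partial_\beta f_{\alpha,\beta}(x) = (\alpha/\beta - x)\,f_{\alpha,\beta}(x)$, Cauchy--Schwarz yields $\int |\partial_\beta f_{\alpha,\beta}(x)|\,dx = \mathrm{E}_\beta |X - \alpha/\beta| \le \sqrt{\alpha}/\beta$, and the fundamental theorem of calculus combined with Fubini gives
\[
\int | f_{\alpha,\beta(\eta)}(x) - f_{\alpha,\beta(\eta')}(x) |\, dx \;\le\; \|\eta - \eta'\|\int_0^1 \frac{\sqrt{\alpha}\,\|\nabla\beta(\eta_s)\|}{\beta(\eta_s)}\, ds \;.
\]
The crux is therefore a bound of the form $\|\nabla\beta_i(\eta)\| \lesssim \sqrt{\bar{r}\,\beta_i(\eta)}$, which (since $\beta_i(\eta)\ge b_i$) makes the integrand uniformly bounded in $\eta_s$.

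For $\beta_1$, direct differentiation gives $\|\nabla\beta_1(\eta)\|^2 \le 2\sum_i(\eta_i - \eta_0/\sqrt{q})^2 = 4(\beta_1(\eta) - b_1)$, so $\sqrt{\alpha_1}\,\|\nabla\beta_1\|/\beta_1 \le 2\sqrt{\alpha_1/b_1} = O(\sqrt{q})$. For $\beta_2$, writing $R$ for the residual vector with entries $r_{ij} = y_{ij} - x_{ij}^\intercal \eta_{00}/\sqrt{q} - \eta_i$, the gradient decomposes into an $\eta_{00}$-piece $-(1/\sqrt{q})X^\intercal R$ and $\eta_i$-pieces $-\sum_j r_{ij}$. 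Condition $(B_2)$ yields $\|X^\intercal R\|^2 \le \lambda_{\max}(X^\intercal X)\|R\|^2 \le k_2\bar{r}q\|R\|^2$, while Cauchy--Schwarz and $(B_1)$ give $\sum_i(\sum_j r_{ij})^2 \le r_{\max}\|R\|^2 \le m\bar{r}\|R\|^2$; adding these and using $\|R\|^2 = 2(\beta_2(\eta) - b_2)$ produces $\|\nabla\beta_2(\eta)\|^2 \le 2(k_2+m)\bar{r}(\beta_2(\eta)-b_2)$, hence $\sqrt{\alpha_2}\,\|\nabla\beta_2\|/\beta_2 \le \sqrt{2(k_2+m)\alpha_2\bar{r}/b_2} = O(\sqrt{q}\,\bar{r})$. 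Summing the $\lambda$- and $\tau$-contributions gives an overall Lipschitz constant of order $O(\sqrt{q}\,\bar{r})$, which is certainly at most $C\,q\,\bar{r}^{3/2}$ for all large $q$, so Theorem~\ref{thm:ms} delivers the proposition. The main obstacle is the matrix-analytic bound on $\|\nabla\beta_2\|$: only through the spectral control afforded by $(B_2)$, together with the balance enforced by $(B_1)$, does the gradient end up scaling like $\sqrt{\bar{r}\,\beta_2}$ rather than something much larger, and it is this scaling that ultimately produces the $\bar{r}^{3/2}$ factor in the conclusion.
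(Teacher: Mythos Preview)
Your argument is correct and, in fact, yields a sharper Lipschitz constant (of order $\bar r\sqrt q$) than the $q\,\bar r^{3/2}$ asserted in the proposition. The initial reduction---using that $\pi_1$ does not depend on $\eta$ and the product inequality to split into two one-dimensional Gamma total-variation problems---is exactly what the paper does. Where you diverge is in the treatment of each Gamma piece: you differentiate along the segment $\eta_s$, use the clean identity $\partial_\beta f_{\alpha,\beta}(x)=(\alpha/\beta-x)f_{\alpha,\beta}(x)$ together with $\mathrm E_\beta|X-\alpha/\beta|\le\sqrt\alpha/\beta$, and then reduce everything to the ratio $\sqrt\alpha\,\|\nabla\beta\|/\beta$, which you bound uniformly in $\eta$ by exhibiting $\|\nabla\beta_i\|^2\le c\,\bar r\,(\beta_i-b_i)$ and exploiting $\beta_i\ge b_i$. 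The paper instead bounds $\int|f_{\alpha,\beta}-f_{\alpha,\beta'}|$ by $2[(\beta'/\beta)^\alpha-1]$ via a direct crossing-point calculation, then controls $\Delta=\beta'-\beta$ by Cauchy--Schwarz, applies the mean-value theorem only for $\|\eta-\eta'\|<\bar r^{-3/2}q^{-1}$, takes an explicit limit as $q\to\infty$ to pin down the constant, and finally patches together the large-$\|\eta-\eta'\|$ case by subdividing the segment. Your path-derivative route is considerably more economical: it bypasses the crossing-point argument, the limit computation, and the extension step entirely, and it makes the dependence on $(B_1)$ and $(B_2)$ more transparent through the single gradient estimate. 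The paper's approach, on the other hand, is slightly more elementary in that it never differentiates under the integral sign.
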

\begin{proof}
According to Theorem~\ref{thm:ms} it suffices to find a $C > 0$ and a
positive integer $q_0$ such that, for all $q \ge q_0$, we have
\[
\int_{\mathbb{R}^{p+q+1}} \big| k(\eta, \tilde{\eta}) -
k(\eta',\tilde{\eta}) \big| \, \df \tilde{\eta} \leq 2 C \bar{r}^{3/2}
q \|\eta - \eta' \|
\]
for all $\eta, \eta' \in \mathbb{R}^{p+q+1}$.  Fix $(\eta, \eta')$ and
assume that $\eta \ne \eta'$, otherwise the result is trivial.
% Let
% $\pi_2(\lambda,\tau \, | \, \eta,y) = \pi_{21}(\lambda \, | \, \eta,y)
% \pi_{22}(\tau \, | \, \eta,y)$, and note that
Recall that in the posterior distribution $\lambda$ and $\tau$ are conditionally
independent given $(\eta,Y)$. Hence it follows that the density
$\pi_2(\lambda,\tau|\eta,Y)$ factors, and we will write
$\pi_2(\lambda,\tau|\eta,Y) = \pi_{21}(\lambda|\eta,Y) \pi_{22}(\tau|\eta,Y)$.
Now note that
\begin{align*}
  & \Big| \pi_1(\tilde{\eta} \,  | \, \lambda,\tau,Y) \pi_{21}(\lambda
  \, | \, \eta,Y) \pi_{22}(\tau \, | \, \eta,Y)
    - \pi_1(\tilde{\eta}
  \, | \, \lambda,\tau,Y) \pi_{21}(\lambda \, | \, \eta',Y)
  \pi_{22}(\tau \, | \, \eta',Y) \Big| \\ = & \pi_1(\tilde{\eta} \, |
  \, \lambda,\tau,Y) \Big| \pi_{21}(\lambda \, | \, \eta,Y)
  \pi_{22}(\tau \, | \, \eta,Y) - \pi_{21}(\lambda \, | \, \eta', Y)
  \pi_{22}(\tau \, | \, \eta',Y) \Big| \\ = & \pi_1(\tilde{\eta} \, |
  \, \lambda,\tau,Y) \Big| \pi_{21}(\lambda \, | \, \eta, Y)
  \pi_{22}(\tau \, | \, \eta,Y) - \pi_{21}(\lambda \, | \, \eta', Y)
  \pi_{22}(\tau \, | \, \eta,Y) \\
  & \hspace*{45mm}
  + \pi_{21}(\lambda \, | \, \eta', Y)\pi_{22}(\tau \, | \, \eta, Y)
  - \pi_{21}(\lambda \, | \, \eta', Y)
  \pi_{22}(\tau \, | \, \eta', Y) \Big| \;.
\end{align*}
It follows that
\[
\int_{\mathbb{R}^{p+q+1}} \big| k(\eta, \tilde{\eta}) -
k(\eta',\tilde{\eta}) \big| \, \df \tilde{\eta} \le A + B
\]
where
\[
A = \int_{\mathbb{R}_+} \big| \pi_{21}(\lambda \, | \, \eta, Y) -
\pi_{21}(\lambda \, | \, \eta', Y) \big| \, \df \lambda, \]
and
\[B = \int_{\mathbb{R}_+} \big| \pi_{22}(\tau \, |
\, \eta, Y) - \pi_{22}(\tau \, | \, \eta', Y) \big| \, \df \tau \;.
\]
We begin with the second term, $B$.  Recall that $\tau \, | \, \eta, Y
\sim \mbox{Gamma}(\alpha,\beta)$ where $\alpha = N/2 + a_2$
and
$\beta = b_2 + \frac{1}{2} \sum_{i=1}^q \sum_{j=1}^{r_i}
l^2_{ij}$,
where
\[
l_{ij} = y_{ij} - x_{ij}^\intercal \eta_{00}/\sqrt{q} - \eta_i \;.
\]
So, $\tau \, | \, \eta', Y \sim \mbox{Gamma}(\alpha,\beta')$ where
$\beta' = b_2 + \frac{1}{2}\sum_{i=1}^q
\sum_{j=1}^{r_i}
(l'_{ij})^2$,
where
\[
l'_{ij} = y_{ij} - x_{ij}^\intercal \eta'_{00}/\sqrt{q} - \eta'_i \;.
\]
Without loss of generality, assume that $\beta' > \beta$.  It's easy
to show that there is a (unique) number $u \in (0,\infty)$ such that
$\pi_{22}(u | \eta', Y) = \pi_{22}(u | \eta, Y)$, and that
$\pi_{22}(\tau | \eta', Y) \ge \pi_{22}(\tau | \eta, Y)$ for $\tau
\in (0,u]$.  It follows that
\begin{align*}
  B & = 2 \int_0^u \big[ \pi_{22}(\tau \, | \, \eta', Y) -
    \pi_{22}(\tau \, | \, \eta, Y) \big] \, \df \tau \\ & = 2 \int_0^u
  \frac{\beta^\alpha}{\Gamma(\alpha)} \tau^{\alpha-1} \exp(-\beta \tau)
  \bigg[ \bigg( \frac{\beta'}{\beta} \bigg)^\alpha \exp\big((\beta-\beta')
      \tau\big) - 1 \bigg] \, \df \tau \\ & \le 2 \int_0^u
  \frac{\beta^\alpha}{\Gamma(\alpha)} \tau^{\alpha-1} \exp(-\beta \tau)
  \bigg[ \bigg( \frac{\beta'}{\beta} \bigg)^\alpha - 1 \bigg] \, \df
  \tau \\ & \le 2 \bigg[ \bigg( \frac{\beta'}{\beta} \bigg)^\alpha - 1
    \bigg] \\ & = 2 \bigg[ \bigg( \frac{\beta + \Delta}{\beta}
    \bigg)^\alpha - 1 \bigg] \;,
\end{align*}
where $\Delta = \beta' - \beta$.  Now,
\begin{align*}
  2 \Delta & = 2(\beta' - \beta) \\ & = \sum_{i=1}^q \sum_{j=1}^{r_i}
  \big( (l'_{ij})^2 - l^2_{ij} \big) \\ & = \sum_{i=1}^q
  \sum_{j=1}^{r_i} \bigg[ \Big( y_{ij} - x_{ij}^\intercal
    \eta'_{00}/\sqrt{q} - \eta'_i \Big)^2 - \Big( y_{ij} -
    x_{ij}^\intercal \eta_{00}/\sqrt{q} - \eta_i \Big)^2 \bigg] \\ & =
  \sum_{i=1}^q \sum_{j=1}^{r_i} \bigg[ \Big( x_{ij}^\intercal
    \eta_{00}/\sqrt{q} + \eta_i - x_{ij}^\intercal \eta'_{00}/\sqrt{q}
    - \eta'_i \Big)^2 \\ & \hspace*{10mm} + 2 \Big( y_{ij} -
    x_{ij}^\intercal \eta_{00}/\sqrt{q} - \eta_i \Big)
    \Big(x_{ij}^\intercal \eta_{00}/\sqrt{q} + \eta_i -
    x_{ij}^\intercal \eta'_{00}/\sqrt{q} - \eta'_i \Big) \bigg] \\ &
  \le \sum_{i=1}^q \sum_{j=1}^{r_i} \Big( x_{ij}^\intercal
  \eta_{00}/\sqrt{q} + \eta_i - x_{ij}^\intercal \eta'_{00}/\sqrt{q} -
  \eta'_i \Big)^2 \\ & \hspace*{10mm} + 2 \sqrt{\sum_{i=1}^q
    \sum_{j=1}^{r_i} l^2_{ij}} \sqrt{\sum_{i=1}^q \sum_{j=1}^{r_i}
    \Big(x_{ij}^\intercal \eta_{00}/\sqrt{q} + \eta_i -
    x_{ij}^\intercal \eta'_{00}/\sqrt{q} - \eta'_i \Big)^2} \;,
\end{align*}
where the inequality is Cauchy-Schwarz.  Now, $\sum_{i=1}^q
\sum_{j=1}^{r_i} l^2_{ij} \le 2 \beta$ and
\begin{align*}
& \sum_{i=1}^q \sum_{j=1}^{r_i} \Big( x_{ij}^\intercal
\eta_{00}/\sqrt{q} + \eta_i - x_{ij}^\intercal \eta'_{00}/\sqrt{q} -
\eta'_i \Big)^2 \\
\le & \sum_{i=1}^{q} \sum_{j=1}^{r_i} \bigg[
  \frac{2}{q}(\eta_{00} - \eta'_{00})^\intercal x_{ij}
  x_{ij}^\intercal (\eta_{00} - \eta'_{00}) + 2 (\eta_i - \eta'_i)^2
  \bigg] \\
= &\frac{2}{q} (\eta_{00} - \eta'_{00})^\intercal
\sum_{i=1}^{q} \sum_{j=1}^{r_i} \big( x_{ij} x_{ij}^\intercal \big)
(\eta_{00} - \eta'_{00}) + 2 \sum_{i=1}^{q} \sum_{j=1}^{r_i} (\eta_i -
\eta'_i)^2 \\
= &\frac{2}{q} (\eta_{00} - \eta'_{00})^\intercal
X^\intercal X (\eta_{00} - \eta'_{00}) + 2 \sum_{i=1}^{q}
\sum_{j=1}^{r_i} (\eta_i - \eta'_i)^2 \\
\le & \frac{2 N k_2}{q}
(\eta_{00} - \eta'_{00})^\intercal (\eta_{00} - \eta'_{00}) + 2
r_{\max} \sum_{i=1}^{q} (\eta_i - \eta'_i)^2 \\
\le & 2 \rb k_2
(\eta_{00} - \eta'_{00})^\intercal (\eta_{00} - \eta'_{00}) + 2 m \rb
\sum_{i=1}^{q} (\eta_i - \eta'_i)^2 \\
\le & 2 c_2 \rb \|\eta -
\eta' \|^2 \;,
\end{align*}
where the second inequality follows from assumption $(B_2)$, and $c_2
:= k_2 \vee m$.  Thus,
\[
  \Delta \le c_2 \rb \|\eta - \eta' \|^2 + 2 \sqrt{c_2 \rb \beta}
  \|\eta - \eta' \| \;,
\]
and it follows that
\[
B \le 2 \bigg[ \Big(1 + 2 \sqrt{\frac{c_2 \rb}{b_2}} \|\eta - \eta' \|
  + \frac{c_2 \rb}{b_2} \|\eta - \eta' \|^2 \Big)^\alpha - 1 \bigg]
\;.
\]
So if we define $f: [0,\infty) \rightarrow [0,\infty)$ as
\[
f(x) = \Big(1 + 2 \sqrt{\frac{c_2 \rb}{b_2}} x + \frac{c_2 \rb}{b_2}
x^2 \Big)^\alpha - 1 \;,
\]
then $B \le 2 f(\|\eta - \eta' \|)$.  Note that $f(0)=0$, and
\[
f'(x) = \alpha \Big(1 + 2 \sqrt{\frac{c_2 \rb}{b_2}} x + \frac{c_2
  \rb}{b_2} x^2 \Big)^{\alpha-1} \bigg[ 2 \sqrt{\frac{c_2 \rb}{b_2}} +
  2 \frac{c_2 \rb}{b_2} x \bigg]
\]
is increasing on $(0,\infty)$.  Now assume that $\|\eta - \eta' \| <
\rb^{-3/2} q^{-1}$.  Then by the mean-value theorem, we have
\begin{equation}
  \label{eq:mvt}
  B \le 2 f'\big( \rb^{-3/2} q^{-1} \big) \|\eta - \eta' \| = 2 \Big
  \{ \rb^{-3/2} q^{-1} f'\big( \rb^{-3/2} q^{-1} \big) \Big \}
  \rb^{3/2} q \|\eta - \eta' \| \;.
\end{equation}
We now examine the behavior of the term in curly brackets as $q$ gets
large.
It can be shown using L'H\^opital's rule that,
for any $a \in \mathbb{R}$,
\[
\lim_{x \to 0} (1 + ax + o(x))^{1/x} = e^a  \;,
\]
where $o(x)$ represents any function
$h(x)$ such that $h(x) / x \to 0$ as $x \rightarrow 0$.
% $\{h(x)\; | \; h(x) / x \to 0, \text{ as } x \to 0\}$.
% First, straightforward manipulations show that
% \begin{align*}
% \lim_{q \rightarrow \infty} \Big(1 + 2 \sqrt{\frac{c_2}{b_2}}
% \frac{1}{\rb q} + & \frac{c_2}{b_2 \rb^2 q^2} \Big)^{\rb q/2 + a_2 -1}
% \\ = &
% \exp  \Bigg \{  \sqrt{\frac{c_2}{b_2}} \lim_{q \rightarrow \infty}
% \Bigg[ \sqrt{\frac{b_2}{c_2}} \frac{\rb q}{2} \log \bigg(1 + 2
%   \sqrt{\frac{c_2}{b_2}} \frac{1}{\rb q} + \frac{c_2}{b_2 \rb^2 q^2}
%   \bigg) \Bigg] \Bigg \} \;.
% \end{align*}
% Then, letting $z = \sqrt{ \frac{b_2}{c_2}} \frac{\rb q}{2}$, we have
% \[
% \lim_{q \rightarrow \infty} \Big(1 + 2 \sqrt{\frac{c_2}{b_2}}
% \frac{1}{\rb q} + \frac{c_2}{b_2 \rb^2 q^2} \Big)^{\rb q/2 + a_2 -1} =
% \exp \Bigg \{ \sqrt{\frac{c_2}{b_2}} \lim_{z \rightarrow \infty}
% \bigg[ z \log \Big(1 + \frac{1}{z} + \frac{1}{4z^2} \Big) \bigg] \Bigg
% \} \;.
% \]
% An application of L'H\^opital's rule shows that the limit on the
% right-hand side is unity.
Using $x = 2 /(\rb q)$ and $a =  \sqrt{c_2 / b_2}$,
we have,
\[
\lim_{q \rightarrow \infty} \Big(1 + 2 \sqrt{\frac{c_2}{b_2}}
\frac{1}{\rb q} + \frac{c_2}{b_2 \rb^2 q^2} \Big)^{\rb q/2 + a_2 -1} =
\exp \bigg\{ \sqrt{\frac{c_2}{b_2}} \bigg\} \;.
\]
So we have
\begin{align*}
& \lim_{q \rightarrow \infty} \rb^{-3/2}  q^{-1} f'\big( \rb^{-3/2}
q^{-1} \big)
\\ = & \lim_{q \rightarrow \infty} \rb^{-3/2} q^{-1} (\rb
q/2 + a_2) \Big(1 + 2 \sqrt{\frac{c_2}{b_2}} \frac{1}{\rb q} +
\frac{c_2}{b_2 \rb^2 q^2} \Big)^{\rb q/2 + a_2 -1}
 \bigg[ 2
  \sqrt{\frac{c_2 \rb}{b_2}} + \frac{2 c_2}{b_2 \sqrt{\rb} q} \bigg]
\\ = &
\lim_{q \rightarrow \infty} \bigg( \frac{1}{2} + \frac{a_2}{\rb q}
\bigg) \Big(1 + 2 \sqrt{\frac{c_2}{b_2}} \frac{1}{\rb q} +
\frac{c_2}{b_2 \rb^2 q^2} \Big)^{\rb q/2 + a_2 -1} \bigg[ 2
  \sqrt{\frac{c_2}{b_2}} + \frac{2 c_2}{b_2 \rb q} \bigg]
\\ = &
\sqrt{\frac{c_2}{b_2}} \exp \bigg\{ \sqrt{\frac{c_2}{b_2}} \bigg\} \;.
\end{align*}
Thus, there exists $c_3 < \infty$ such that
\[
\max_{q > 2} 2 \Big \{ \rb^{-3/2} q^{-1} f'\big( \rb^{-3/2} q^{-1}
\big) \Big \} = c_3 \;.
\]
It now follows from \eqref{eq:mvt} that
\begin{equation*}
  B \le c_3 \rb^{3/2} q \|\eta - \eta' \|
\end{equation*}
whenever $\|\eta - \eta' \| < \rb^{-3/2} q^{-1}$.

A slightly simpler version of this same argument shows that there
exists a $c_1 \in (0,\infty)$ such that $A \le c_1 q \|\eta - \eta'
\|$ whenever $\|\eta - \eta' \| \le 1/q$.  Putting all of this
together shows that, for all $q > 2$ and all pairs $(\eta,\eta')$ such
that $\|\eta - \eta' \| < \rb^{-3/2} q^{-1}$, we have
\begin{align*}
\int_{\mathbb{R}^{p+q+1}} \big| k(\eta, \tilde{\eta}) -
k(\eta',\tilde{\eta}) \big| \, \df \tilde{\eta} \le A + B \le c_1 q
\|\eta - \eta' \| + c_3 \rb^{3/2} q \|\eta - \eta' \|
\le (c_1 + c_3)
\rb^{3/2} q \|\eta - \eta' \| \;.
\end{align*}
Now all that remains is to extend this so it holds for all pairs
$(\eta,\eta')$.  For an arbitrary pair $(\eta,\eta')$, let $v =
(\eta' - \eta)/m^*$ where the integer $m^*$ is chosen
such that
\[
\| v \| = \bigg\| \frac{1}{m^*}(\eta' - \eta) \bigg\| = \frac{1}{m^*}
\| \eta' - \eta \| < \rb^{-3/2} q^{-1} \;.
\]
For $j=0,1,\dots,m^*$, let $\xi_j = \eta + j v$.  Then $\xi_0 = \eta$,
$\xi_{m^*} = \eta'$, and $\|\xi_{j+1} - \xi_j \| = \|v\| < \rb^{-3/2}
q^{-1}$.  Hence,
\begin{align*}
\int_{\mathbb{R}^{p+q+1}} \big| k(\eta, \tilde{\eta}) -
k(\eta',\tilde{\eta}) \big| \, \df \tilde{\eta} & \le
\sum_{j=0}^{m^*-1} \int_{\mathbb{R}^{p+q+1}} \big| k(\xi_{j+1},
\tilde{\eta}) - k(\xi_j,\tilde{\eta}) \big| \, \df \tilde{\eta} \\ &
\le \sum_{j=0}^{m^*-1} (c_1 + c_3) \rb^{3/2} q \| \xi_{j+1} - \xi_j \|
\\ & = \sum_{j=0}^{m^*-1} (c_1 + c_3) \rb^{3/2} q \| v \| \\ & = (c_1
+ c_3) \rb^{3/2} q \|\eta - \eta' \| \;.
\end{align*}
\end{proof}

\section{Drift Condition: Proof of Proposition~\ref{prop:drift}}
\label{sec:drift}

We begin by stating three results, all proven in Appendix~\ref{app:B},
that will be used to establish the drift and contraction conditions.
The first lists a few simple facts about $X$ \& $Y$, the second is a
simple matrix result that we could not find stated elsewhere, and the
third provides an upper bound on the largest eigenvalue of the square
of the derivative of the square root of a matrix.

\begin{lemma}
  \label{lem:conseq}
  The data, $X$ and $Y$, satisfy the following:
  \begin{enumerate}[label=(\roman*)]%[(i)]
    \item $\bar{X}^\intercal \bar{X} \preccurlyeq X^\intercal X$
    \item $\sum_{i=1}^{q} \bar{x}_i \bar{x}^\intercal_i \preccurlyeq
      \frac{1}{r_{\min}} \bar{X}^\intercal \bar{X} \preccurlyeq
      \frac{1}{r_{\min}} X^\intercal X$
    \item $\sum_{i=1}^{q} \bar{y}_i^2 \le \frac{1}{r_{\min}} Y^\intercal Y$
    \item $\bar{y}^2 \le \frac{1}{q r_{\min}} Y^\intercal Y$
\end{enumerate}
\end{lemma}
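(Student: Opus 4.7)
The plan is to exploit the block-average structure common to all four parts. Define the block-diagonal orthogonal projection $P = \oplus_{i=1}^q r_i^{-1} 1_{r_i} 1_{r_i}^\intercal$ acting on $\mathbb{R}^N$; this is the projection onto the subspace of vectors that are constant within each of the $q$ groups. Direct computation shows $\bar{X} = PX$ and $\bar{Y} = PY$. Equivalently, in entirely algebraic terms, $\bar{X}^\intercal \bar{X} = \sum_{i=1}^q r_i \bar{x}_i \bar{x}_i^\intercal$ (because each row $\bar{x}_i^\intercal$ is repeated $r_i$ times in $\bar{X}$) and $X^\intercal X = \sum_{i=1}^q \sum_{j=1}^{r_i} x_{ij} x_{ij}^\intercal$. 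Part (i) then follows by writing
\[
X^\intercal X - \bar{X}^\intercal \bar{X} = \sum_{i=1}^q \sum_{j=1}^{r_i} \bigl( x_{ij} - \bar{x}_i \bigr) \bigl( x_{ij} - \bar{x}_i \bigr)^\intercal \succcurlyeq 0,
\]
since the cross terms vanish by the defining property $\sum_{j=1}^{r_i}(x_{ij} - \bar{x}_i) = 0$ of the group mean. (The projection viewpoint gives the same thing: $X^\intercal X - \bar{X}^\intercal \bar{X} = X^\intercal(I-P)X \succcurlyeq 0$.)

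For (ii), I would observe that
\[
\bar{X}^\intercal \bar{X} = \sum_{i=1}^q r_i \, \bar{x}_i \bar{x}_i^\intercal \;\succcurlyeq\; r_{\min} \sum_{i=1}^q \bar{x}_i \bar{x}_i^\intercal,
\]
which after dividing by $r_{\min}$ gives the first inequality in (ii). The second inequality in (ii) is simply $r_{\min}^{-1}$ times (i).

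Parts (iii) and (iv) are one-line Cauchy--Schwarz (equivalently $\|Pu\| \le \|u\|$) arguments. For (iii), $\bar{y}_i^2 = (r_i^{-1} \sum_{j=1}^{r_i} y_{ij})^2 \le r_i^{-1} \sum_{j=1}^{r_i} y_{ij}^2$, so summing over $i$ and replacing each $r_i^{-1}$ with the larger $r_{\min}^{-1}$ produces $\sum_i \bar{y}_i^2 \le r_{\min}^{-1} Y^\intercal Y$. For (iv), another application of Cauchy--Schwarz gives $\bar{y}^2 = (q^{-1} \sum_i \bar{y}_i)^2 \le q^{-1} \sum_i \bar{y}_i^2$, and substituting (iii) yields the stated bound. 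There is no genuine obstacle: the lemma is a collection of elementary inequalities about orthogonal projections and arithmetic means, collected here purely for downstream bookkeeping in the drift and contraction proofs.
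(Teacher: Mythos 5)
Your proof is correct and follows essentially the same route as the paper: the projection $P$ you define is exactly the matrix $R = \bigoplus_{i=1}^q r_i^{-1} J_{r_i}$ used there, and parts (i) and (ii) are identical in substance (projection argument plus the bound $r_i \ge r_{\min}$). The only cosmetic differences are in (iii) and (iv), where you apply Cauchy--Schwarz within each group and then chain through (iii), while the paper routes (iii) via $\bar{Y}^\intercal \bar{Y} \le Y^\intercal Y$ and proves (iv) by a single Cauchy--Schwarz against a tailored vector $a$; these are trivial rearrangements of the same elementary argument.
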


\begin{remark}
  \label{rem:1}
  Under $(B_1)$-$(B_3)$, the results in Lemma~\ref{lem:conseq} imply
  that $\sum_{i=1}^{q} \bar{x}_i \bar{x}^\intercal_i \preccurlyeq m q
  k_2 I$, $\sum_{i=1}^{q} \bar{y}_i^2 = \mathcal{O}(q)$, and
  $\bar{y}^2 = \mathcal{O}(1)$.
\end{remark}

\begin{lemma}
  \label{lem:simple_matrix}
  If $C, D$ are conformable matrices, then
\[
(C+D)^\intercal (C+D) \preccurlyeq 2(C^\intercal C + D^\intercal D)
\;.
\]
\end{lemma}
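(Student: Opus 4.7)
The plan is to reduce the stated Loewner inequality to the obvious fact that a Gram matrix $(C-D)^\intercal(C-D)$ is positive semidefinite. First I would form the difference $2(C^\intercal C + D^\intercal D) - (C+D)^\intercal (C+D)$ and expand the second term using distributivity of matrix multiplication and the identity $(A+B)^\intercal = A^\intercal + B^\intercal$. This gives
\[
2(C^\intercal C + D^\intercal D) - (C^\intercal C + C^\intercal D + D^\intercal C + D^\intercal D) = C^\intercal C - C^\intercal D - D^\intercal C + D^\intercal D \;,
\]
which I would then recognize as $(C-D)^\intercal(C-D)$.

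The final step is to verify that this matrix is positive semidefinite. For any conformable vector $x$, one has $x^\intercal (C-D)^\intercal (C-D) x = \|(C-D)x\|^2 \ge 0$, so $(C-D)^\intercal(C-D) \succcurlyeq 0$, which is exactly the claimed inequality $(C+D)^\intercal (C+D) \preccurlyeq 2(C^\intercal C + D^\intercal D)$. There is no real obstacle here; the only thing to be careful about is making sure the matrices are conformable for both addition and for the products $C^\intercal D$ and $D^\intercal C$ (which is automatic once $C$ and $D$ have the same shape). The lemma can equivalently be viewed as the matrix-valued analogue of the scalar inequality $(c+d)^2 \le 2(c^2 + d^2)$, and the proof is essentially the same algebraic identity together with the nonnegativity of a squared norm.
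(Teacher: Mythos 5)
Your proof is correct, and it takes a slightly different (and cleaner) route than the paper's. You observe that the difference $2(C^\intercal C + D^\intercal D) - (C+D)^\intercal(C+D)$ simplifies exactly to the Gram matrix $(C-D)^\intercal(C-D)$, which is positive semidefinite by inspection; this is the matrix ``completing the square'' argument. The paper instead works with quadratic forms: for an arbitrary vector $x$ it bounds the cross term $x^\intercal C^\intercal D x = (Cx)^\intercal(Dx)$ by Cauchy--Schwarz followed by the AM--GM inequality, obtaining $x^\intercal C^\intercal D x \le \tfrac{1}{2}(x^\intercal C^\intercal C x + x^\intercal D^\intercal D x)$, and then substitutes this into the expansion of $x^\intercal(C+D)^\intercal(C+D)x$. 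Both arguments are short and elementary; yours avoids invoking Cauchy--Schwarz at all and reduces the claim to a single algebraic identity plus the nonnegativity of a squared norm, while the paper's version makes the scalar inequality $2ab \le a^2 + b^2$ that underlies the result more visible. Either is a perfectly acceptable proof of the lemma.
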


\begin{lemma}
  \label{lem:sr}
  Let $A = A(x)$ be a positive definite matrix that depends on a
  scalar, $x$.  If $A^{1/2} = A^{1/2}(x)$ is the
  unique positive definite square root of $A$, then
\[
\lambda_{\max} \bigg\{ \Big( \frac{\df A^{1/2}}{\df x} \Big)^2
\bigg\} \le \frac{\lambda_{\max} \Big\{ \Big( \frac{\df A}{\df x}
  \Big)^2 \Big\} }{4 \lambda_{\min}(A)} \;.
\]
\end{lemma}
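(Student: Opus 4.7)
The plan is to set $B = A^{1/2}$ and differentiate the identity $A = B^2$, which yields the Sylvester-type relation
\[
\frac{\df A}{\df x} = \frac{\df B}{\df x} \, B + B \, \frac{\df B}{\df x}.
\]
Writing $A' = \df A / \df x$ and $B' = \df B / \df x$ for brevity, $A(x)$ is symmetric for every $x$, hence so is $A'$; and $B(x)$, being the unique positive definite square root of $A(x)$, is symmetric as well, so $B'$ is symmetric too. In particular, for any symmetric matrix $M$ one has $\lambda_{\max}(M^2) = \norm{M}_{\mathrm{op}}^2$, so the lemma is equivalent to the operator-norm bound
\[
\norm{B'}_{\mathrm{op}} \le \frac{\norm{A'}_{\mathrm{op}}}{2 \sqrt{\lambda_{\min}(A)}}.
\]

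To establish this, I would select a unit eigenvector $v$ of $B'$ whose eigenvalue $\mu$ attains $|\mu| = \norm{B'}_{\mathrm{op}}$. Sandwiching the Sylvester relation between $v^\intercal$ and $v$ and exploiting the symmetry of $B'$ (so that $v^\intercal B' = \mu v^\intercal$),
\[
v^\intercal A' v = (B' v)^\intercal B v + v^\intercal B (B' v) = 2 \mu \, v^\intercal B v.
\]
Since $B$ is positive definite with smallest eigenvalue $\sqrt{\lambda_{\min}(A)}$, one has $v^\intercal B v \ge \sqrt{\lambda_{\min}(A)} > 0$; and Cauchy--Schwarz gives $|v^\intercal A' v| \le \norm{A'}_{\mathrm{op}}$. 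Rearranging this scalar identity delivers the desired bound on $|\mu|$, and squaring both sides then produces the stated inequality of the lemma.

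The only step requiring some care beyond this is the implicit assertion that $B(x) = A(x)^{1/2}$ is actually differentiable in $x$, so that $A' = B'B + BB'$ is a meaningful identity in the first place. This is standard: since $A(x)$ is positive definite and (tacitly) smooth in $x$, the holomorphic (or spectral) functional calculus shows that $x \mapsto A(x)^{1/2}$ is smooth and permits term-by-term differentiation, and I would simply cite this fact. The genuine content, and the one that yields the sharp constant $2\sqrt{\lambda_{\min}(A)}$, is the observation that once $v$ is taken to be an eigenvector of $B'$, the matrix Sylvester equation collapses to the single scalar identity $v^\intercal A' v = 2 \mu \, v^\intercal B v$; everything else is bookkeeping.
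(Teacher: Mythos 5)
Your proof is correct and uses essentially the same key idea as the paper: differentiate $A = B^2$ to get $A' = B'B + BB'$, then sandwich this relation between a top eigenvector $v$ of $B'$ to collapse it to the scalar identity $v^\intercal A' v = 2\mu\, v^\intercal B v$, and bound using $v^\intercal B v \ge \sqrt{\lambda_{\min}(A)}$. The paper packages the same computation as a proof by contradiction with two sign cases ($\lambda_{\max}(R_0)$ too large, or $\lambda_{\min}(R_0)$ too negative), whereas your use of the operator norm handles both signs at once and yields a direct argument — a modest streamlining, not a different method.
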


We now restate Proposition~\ref{prop:drift} for convenience.

\begin{proposition}
  Under assumptions $(B_1)$-$(B_5)$, there exist $\zeta = \zeta(q) =
  \mathcal{O}(q^{-1})$, $L = L(q) = \mathcal{O}(1)$, and a positive
  integer $q_0$ such that for all $q \ge q_0$ and all $\eta \in
  \mathbb{R}^{p+q+1}$, we have
\begin{equation*}
  \int_{\mathbb{R}^{p+q+1}} V(\tilde{\eta}) K(\eta, d\tilde{\eta})
  \leq \zeta V(\eta) + L \;.
\end{equation*}
\end{proposition}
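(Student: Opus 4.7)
The plan is to work directly with the random mapping $f$ that induces $\Gamma$ (defined just before Proposition~\ref{prop:contraction}), and to compute $\int V(\tilde{\eta}) K(\eta,d\tilde{\eta}) = \mathrm{E}[V(f(\eta))]$ by iterated expectation in the natural Gibbs order: first integrating over $N_1,\dots,N_q$ (i.e.\ over $\tilde{\eta}_1,\dots,\tilde{\eta}_q$), then over $N_0$, then over $N_{00}$, and finally over $(J_1,J_2)$ (equivalently $(\lambda^{(\eta)},\tau^{(\eta)})$). Since every step of the sampler is Gaussian except the last, each conditional expectation reduces to a ``mean-squared $+$ variance'' computation, which I will apply to each of the three summands in $V(\tilde{\eta})$.

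Concretely, for the third piece $\frac{1}{\rb q}\sum_i r_i(\tilde{\eta}_i+\bar{y}-\bar{y}_i)^2$, a short algebraic rearrangement of the mean of $\tilde{\eta}_i$ given $(\tilde{\eta}_0,\tilde{\eta}_{00},\lambda,\tau)$ shows that the conditional mean of $\tilde{\eta}_i+\bar{y}-\bar{y}_i$ equals the convex combination $\frac{\lambda}{t_i}(\tilde{\eta}_0/\sqrt{q}+\bar{y}-\bar{y}_i)+\frac{r_i\tau}{t_i}(\bar{y}-\bar{x}_i^\intercal\tilde{\eta}_{00}/\sqrt{q})$. Using $\lambda/t_i,r_i\tau/t_i\in[0,1]$ together with $(a+b)^2\le 2(a^2+b^2)$, this produces, after the inner expectation, quadratic forms that combine with the first and second summands of $V(\tilde{\eta})$, plus a trace term $\frac{1}{\rb q}\sum_i r_i/t_i\le 1/(\rb\tau)$. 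Next, integrating over $\tilde{\eta}_0\sim\mathrm{N}(\text{weighted mean},q/\sum 1/z_i^{(\eta)})$ produces the second piece $\tilde{\eta}_0^2/q$ in expectation and a variance contribution of order $1/\sum 1/z_i^{(\eta)}$. Finally integrating over $\tilde{\eta}_{00}/\sqrt{q}\sim\mathrm{N}_p(v/\sqrt{q},Q/\tau)$ turns the mean-squared terms into quadratic forms in $v/\sqrt{q}=Q(X^\intercal Y-\bar X^\intercal M\bar Y)$ and yields a variance trace $\frac{1}{\rb q\tau}\mathrm{tr}(X^\intercal X\,Q)$. At each stage I will use Lemma~\ref{lem:simple_matrix} to linearize the squares, Lemma~\ref{lem:conseq} and Remark~\ref{rem:1} to replace $\bar X^\intercal\bar X$ and $\sum\bar x_i\bar x_i^\intercal$ by multiples of $X^\intercal X$, and the bound $0\preccurlyeq M\preccurlyeq I$ together with $(B_2)$ to obtain $Q\preccurlyeq(\rb q k_1)^{-1}I$ and hence $\mathrm{tr}(X^\intercal XQ)\le pk_2/k_1$.

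The outermost expectation is the key quantitative step. The gamma representations in the random mapping give closed forms
\[
\mathrm{E}[1/\tau^{(\eta)}\mid\eta]=\frac{b_2+\tfrac12\sum_{ij}l_{ij}^2}{N/2+a_2-1},\qquad
\mathrm{E}[1/\lambda^{(\eta)}\mid\eta]=\frac{b_1+\tfrac12\sum_i(\eta_i-\eta_0/\sqrt{q})^2}{q/2+a_1-1},
\]
where $l_{ij}=y_{ij}-x_{ij}^\intercal\eta_{00}/\sqrt{q}-\eta_i$. Decomposing $l_{ij}=(y_{ij}-\bar y_i)+(\bar y-x_{ij}^\intercal\eta_{00}/\sqrt{q})-(\eta_i+\bar y-\bar y_i)$ and applying $(a+b+c)^2\le 3(a^2+b^2+c^2)$, together with $(B_3)$ to control $\sum(y_{ij}-\bar y_i)^2\le\rb q\ell$, yields $\sum_{ij}l_{ij}^2\le 3\rb q\,V(\eta)+3\rb q\ell$; a similar split handles $\sum_i(\eta_i-\eta_0/\sqrt{q})^2$ in terms of $V(\eta)$. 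Therefore $\mathrm{E}[1/\tau^{(\eta)}\mid\eta]=\mathcal{O}(V(\eta)+1)$ and $\mathrm{E}[1/\lambda^{(\eta)}\mid\eta]=\mathcal{O}(V(\eta)+1)/q$. Multiplying these by the $1/(\rb q)$ prefactors attached to the trace contributions, and using $(B_4)$ (so $p/q=\mathcal{O}(1)$) and $(B_5)$ (so $1/\rb=o(q^{-(2+\delta)})$), every variance contribution is of the form $\mathcal{O}(q^{-1})V(\eta)+\mathcal{O}(1)$, and in fact better.

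The main obstacle is the ``mean-squared'' contribution from Step 3, namely terms of the form $\frac{1}{\rb q}\sum_{ij}(\bar y-x_{ij}^\intercal v/\sqrt{q})^2$ and $\frac{1}{q}(\sum_i(\bar y_i-\bar x_i^\intercal v/\sqrt{q})/z_i)^2/(\sum 1/z_i)^2$, which involve $v=\sqrt{q}Q(X^\intercal Y-\bar X^\intercal M\bar Y)$ and depend on $(\lambda,\tau)$ nonlinearly through $M$ and $Q$. I will bound these \emph{uniformly} in $(\lambda,\tau)$ by first writing, e.g., $\bar y-x_{ij}^\intercal v/\sqrt{q}=\bar y-x_{ij}^\intercal Q(X^\intercal Y-\bar X^\intercal M\bar Y)$, then applying Lemma~\ref{lem:simple_matrix} to split into $\mathrm{tr}(X^\intercal XQ(X^\intercal Y)(X^\intercal Y)^\intercal Q)$-type expressions which are dominated by $Q\preccurlyeq(\rb qk_1)^{-1}I$ together with $(B_2)$, $(B_3)$, and Remark~\ref{rem:1}. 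These estimates depend on $(\lambda,\tau)$ only through the bound $\bar X^\intercal M\bar X\preccurlyeq\bar X^\intercal\bar X$, so they pass through the outer $(\lambda,\tau)$-integration intact and contribute only to the additive constant $L$. Collecting all pieces produces $\mathrm{E}[V(\tilde{\eta})\mid\eta]\le\zeta(q)V(\eta)+L(q)$ with $\zeta(q)=\mathcal{O}(q^{-1})$ and $L(q)=\mathcal{O}(1)$, as required.
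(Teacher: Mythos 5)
Your proposal follows the same approach as the paper's proof: exploit the conditional-Gaussian structure of the Gibbs update via the random mapping (``mean-squared plus variance'' at each layer), use $M\preccurlyeq I$, $Q\preccurlyeq(\rb q k_1)^{-1}I$, Lemma~\ref{lem:simple_matrix}, and Remark~\ref{rem:1} to control the $\tilde{\eta}_{00}$- and $\tilde{\eta}_0$-dependent pieces, reduce the outer expectation to $\mathrm{E}(1/\lambda^{(\eta)})$ and $\mathrm{E}(1/\tau^{(\eta)})$, and finally bound these posterior inverse-precision moments in terms of $V(\eta)$. The paper organizes the calculation around $A = \mathrm{E}(\tilde{\eta}_{00}^\intercal\tilde{\eta}_{00})$ and $B = \mathrm{E}(\tilde{\eta}_0^2)$ rather than explicitly nested expectations, but this is a presentational difference, not a substantive one.

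There is, however, one numerical claim in your sketch that is off by a factor of $q$. You write $\mathrm{E}[1/\lambda^{(\eta)}\mid\eta]=\mathcal{O}(V(\eta)+1)/q$; the correct order is $\mathcal{O}(V(\eta)+1)$, with no extra $1/q$. The ``similar split'' gives $\sum_i(\eta_i - \eta_0/\sqrt{q})^2 \le 3\sum_i(\eta_i + \bar{y} - \bar{y}_i)^2 + 3\sum_i(\bar{y}_i - \bar{y})^2 + 3\eta_0^2$, and since $\sum_i(\eta_i + \bar{y} - \bar{y}_i)^2 \le (\rb q / r_{\min})\cdot\frac{1}{\rb q}\sum_i r_i(\eta_i + \bar{y} - \bar{y}_i)^2 \le m q V(\eta)$ and $\eta_0^2 \le q V(\eta)$, the numerator $b_1+\frac12\sum_i(\eta_i-\eta_0/\sqrt{q})^2$ is of order $q(V(\eta)+1)$, not $V(\eta)+1$. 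Dividing by the $\Theta(q)$ denominator $q/2+a_1-1$ gives $\mathcal{O}(V(\eta)+1)$, matching the paper's \eqref{eq:drift_8}. The $\mathcal{O}(q^{-1})$ contraction factor $\zeta$ does not come from $\mathrm{E}(1/\lambda)$ itself; it comes from the fact that $\mathrm{E}(1/\lambda)$ enters the drift bound only through the $\tilde{\eta}_0^2/q$ piece of $V$, i.e.\ multiplied by $1/q$ (compare \eqref{eq:drift_6}--\eqref{eq:drift_7}). Your extra $1/q$ does not break the stated conclusion (it only claims a tighter $\zeta$ than needed), but it is a computational error that would surface if you wrote out the ``similar split,'' so fix it before finalizing. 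With that repaired, your sketch lines up with the paper's proof.
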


\begin{proof}
We have
\begin{align}
  \label{eq:drift_1}
  & V(\tilde{\eta}) \notag \\
  = &\frac{1}{N} \sum_{i=1}^{q} \sum_{j=1}^{r_i}
  \big( \bar{y} - x_{ij}^\intercal \tilde{\eta}_{00}/\sqrt{q} \big)^2
  + \frac{\tilde{\eta}_0^2}{q} + \frac{1}{N} \sum_{i=1}^{q} r_i
  (\tilde{\eta}_i + \bar{y} - \bar{y}_i)^2 \notag \\ \le & \frac{2}{N}
  \sum_{i=1}^{q} \sum_{j=1}^{r_i} \bar{y}^2 + \frac{2}{qN}
  \tilde{\eta}^\intercal_{00} \Big( \sum_{i=1}^{q} \sum_{j=1}^{r_i}
  x_{ij} x_{ij}^\intercal \Big) \tilde{\eta}_{00} +
  \frac{\tilde{\eta}_0^2}{q} + \frac{r_{\max}}{N} \sum_{i=1}^{q}
  (\tilde{\eta}_i + \bar{y} - \bar{y}_i)^2 \notag \\ \le & 2 \bar{y}^2
  + \frac{2}{qN} \tilde{\eta}^\intercal_{00} (X^\intercal X)
  \tilde{\eta}_{00} + \frac{\tilde{\eta}_0^2}{q} + \frac{m}{q}
  \sum_{i=1}^{q} (\tilde{\eta}_i + \bar{y} - \bar{y}_i)^2 \;,
\end{align}
where, in the last line, we used the fact that $r_{\max}/\rb \le m$.
We now examine the last term in \eqref{eq:drift_1}.  Using the
definition of $\tilde{\eta}_i$ from the random mapping, we have
\begin{align*}
  \sum_{i=1}^{q} (\tilde{\eta}_i + \bar{y} - \bar{y}_i)^2 & =
  \sum_{i=1}^{q} \Big( \frac{\lambda}{t_i} \tilde{\eta}_0 / \sqrt{q} +
  \frac{r_i \tau}{t_i} (\bar{y}_i - \bar{x}_i^\intercal
  \tilde{\eta}_{00}/\sqrt{q}) + \sqrt{\frac{1}{t_i}} N_i + \bar{y} -
  \bar{y}_i \Big)^2 \\ & = \sum_{i=1}^{q} \Big( \frac{\lambda}{t_i}
  \tilde{\eta}_0 / \sqrt{q} - \frac{r_i \tau}{t_i} \bar{x}_i^\intercal
  \tilde{\eta}_{00}/\sqrt{q} + \sqrt{\frac{1}{t_i}} N_i + \bar{y} -
  \frac{\lambda}{t_i} \bar{y}_i \Big)^2 \\ & \le 5 \tilde{\eta}_0^2 +
  \frac{5}{q} \tilde{\eta}_{00}^\intercal \Big( \sum_{i=1}^q \bar{x}_i
  \bar{x}_i^\intercal \Big) \tilde{\eta}_{00} + 5 \sum_{i=1}^q
  \frac{1}{t_i} N_i^2 + 5 q \bar{y}^2 + 5 \sum_{i=1}^q \bar{y}_i^2
  \\ & \le 5 \tilde{\eta}_0^2 + 5 m k_2 \tilde{\eta}_{00}^\intercal
  \tilde{\eta}_{00} + 5 \sum_{i=1}^q \frac{1}{r_i \tau} N_i^2 +
  \mathcal{O}(q) + \mathcal{O}(q) \\ & = \mathcal{O}(1)
  \tilde{\eta}_0^2 + \mathcal{O}(1) \tilde{\eta}_{00}^\intercal
  \tilde{\eta}_{00} + \mathcal{O} \Big( \frac{1}{\rb} \Big) \frac{1}{\tau}
  \sum_{i=1}^q N_i^2 + \mathcal{O}(q) \;,
\end{align*}
where the second inequality follows from Remark~\ref{rem:1}.  Hence,
by $(B_2)$ and Remark~\ref{rem:1}, we have
\begin{align*}
  V(\tilde{\eta}) & \le 2 \bar{y}^2 + \frac{2}{qN}
  \tilde{\eta}^\intercal_{00} (X^\intercal X) \tilde{\eta}_{00} +
  \frac{\tilde{\eta}_0^2}{q} + \frac{m}{q} \sum_{i=1}^{q}
  (\tilde{\eta}_i + \bar{y} - \bar{y}_i)^2 \\
& = \mathcal{O}(1) +
  \mathcal{O} \Big( \frac{1}{q} \Big) \tilde{\eta}^\intercal_{00} \tilde{\eta}_{00} +
  \mathcal{O} \Big( \frac{1}{q} \Big) \tilde{\eta}_0^2
    + \frac{m}{q} \Big(
  \mathcal{O}(1) \tilde{\eta}_0^2 + \mathcal{O}(1)
  \tilde{\eta}_{00}^\intercal \tilde{\eta}_{00} +
  \mathcal{O} \Big( \frac{1}{\rb} \Big) \frac{1}{\tau} \sum_{i=1}^q N_i^2 +
  \mathcal{O}(q) \Big) \\
& = \mathcal{O}(1) + \mathcal{O} \Big( \frac{1}{q} \Big)
  \tilde{\eta}_{00}^\intercal \tilde{\eta}_{00} + \mathcal{O} \Big( \frac{1}{q} \Big)
  \tilde{\eta}_0^2 + \mathcal{O}((q \rb)^{-1}) \frac{1}{\tau}
  \sum_{i=1}^q N_i^2 \;.
\end{align*}
Now let $A = \mbox{E} (\tilde{\eta}_{00}^\intercal \tilde{\eta}_{00} |\eta )$
and $B = \mbox{E}(\tilde{\eta}_0^2 | \eta)$, where the expectation is, of
course, with respect to the Mtk $K(\eta,\cdot)$.
It follows that
\begin{equation}
  \label{eq:drift_2}
  \int_{\mathbb{R}^{p+q+1}} V(\tilde{\eta}) K(\eta, d\tilde{\eta}) =
  \mathcal{O}(1) + \mathcal{O} \Big( \frac{1}{q} \Big) A + \mathcal{O}
  \Big( \frac{1}{q} \Big) B + \mathcal{O} \Big( \frac{1}{\rb} \Big)
  \mbox{E}\Big( \frac{1}{\tau} \Big) \;.
\end{equation}
To bound $A$, recall that $ \eta_{00} \,|\, \lambda, \tau, Y \sim
\mathrm{N}_p (v, \frac{q}{\tau}Q)$, where $Q = (X^\intercal X -
\bar{X}^\intercal M \bar{X})^{-1}$ and $v = \sqrt{q} Q (X^\intercal Y
- \bar{X}^\intercal M \bar{Y})$.  Let $\mathrm{tr}(\cdot)$ denote the
trace of the (square matrix) argument.  We have
\begin{equation}
  \label{eq:drift_3}
  A = \mathrm{E} ( \tilde{\eta}_{00}^\intercal \tilde{\eta}_{00} |\eta) =
  \mathrm{E} \Big[ v^\intercal v + \mathrm{tr} \Big( \frac{q}{\tau}Q
    \Big) \Big] = \mathrm{E} (v^\intercal v) + \mathrm{tr} \Big[
    \mathrm{E} \Big( \frac{q}{\tau}Q \Big) \Big] \;.
\end{equation}
In order to handle the trace term in \eqref{eq:drift_3}, we need to
look carefully at $Q$.  First,
\begin{align}
M & = D_c + \frac{(I - D_c)1 1^\intercal (I - D_c)}{1^\intercal (I -
  D_c) 1} \notag \\
  & = D_c + (I - D_c)^{\frac{1}{2}} \Big(\frac{(I -
  D_c)^{\frac{1}{2}} 1 1^\intercal (I -
  D_c)^{\frac{1}{2}}}{1^\intercal (I - D_c) 1} \Big) (I -
D_c)^{\frac{1}{2}}  \notag \\
& \preccurlyeq D_c + (I - D_c)^{\frac{1}{2}}
\Big(\frac{1^\intercal (I - D_c)1 }{1^\intercal (I - D_c) 1} I \Big)
(I - D_c)^{\frac{1}{2}} \notag \\
& = D_c + (I - D_c) \notag \\
& = I \,,   \label{eq:Mbound}
\end{align}
where the ``inequality'' is due to the fact that, for any vector $u$,
$u u^\intercal \preccurlyeq u^\intercal u I$.  It now follows from
$(B_2)$ that
\begin{equation}
  \label{eq:Qbnd}
  Q = (X^\intercal X - \bar{X}^\intercal M \bar{X})^{-1} \preccurlyeq
  (X^\intercal X - \bar{X}^\intercal \bar{X})^{-1} \preccurlyeq \big[
    (q \rb k_1) I \big]^{-1} = \frac{1}{q \rb k_1} I \;.
\end{equation}
Therefore,
\begin{equation}
  \label{eq:drift_4}
  \mathrm{tr} \Big[ \mathrm{E} \Big( \frac{q}{\tau}Q \Big) \Big] \le
  \mathrm{tr} \Big[ \mathrm{E} \Big( \frac{q}{\tau} \frac{1}{q \rb
      k_1} I \Big) \Big] = \mathcal{O}\Big( \frac{p}{\rb} \Big)
  \mathrm{E}\Big( \frac{1}{\tau} \Big) \;.
\end{equation}
We now go to work on the penultimate term in \eqref{eq:drift_3}.  As
in Appendix~\ref{app:B}, let $R = \bigoplus_{i=1}^{q}
r_i^{-1}
J_{r_i}$.  It's clear that $RX = \bar{X}$ and $RY = \bar{Y}$, and that
$R$ is idempotent.  Hence, $R\bar{X} = R^2 X = R X = \bar{X}$.
Moreover, $R$ and $D_c$ commute, which implies that $R$ and $M$ also
commute.  It follows that
\begin{align*}
v = \sqrt{q} Q (X^\intercal Y - \bar{X}^\intercal M \bar{Y}) =
\sqrt{q} Q (X^\intercal Y - \bar{X}^\intercal M R Y) & =
\sqrt{q} Q (X^\intercal  - \bar{X}^\intercal M R) Y \\
& =
\sqrt{q} Q (X  - M \bar{X})^\intercal Y \,.
\end{align*}
Since $Q^2 \preccurlyeq \lambda^2_{\max}(Q) I \preccurlyeq (q \rb
k_1)^{-2} I$, we have
\begin{align}
  \label{eq:drift_5}
  v^\intercal v & = q Y^\intercal (X - M \bar{X}) Q^2 (X - M
  \bar{X})^\intercal Y \notag \\ & \le \frac{q}{(q \rb k_1)^2}
  Y^\intercal (X - M \bar{X}) (X - M \bar{X})^\intercal Y \notag \\ &
  \le \frac{2q}{(q \rb k_1)^2} Y^\intercal (X X^\intercal + M \bar{X}
  \bar{X}^\intercal M) Y \notag \\ & \le \frac{2 q^2 \rb k_2}{(q \rb
    k_1)^2} Y^\intercal (I + M^2) Y \notag \\ & \le \frac{4 k_2}{\rb
    k_1^2} Y^\intercal Y \notag \\ & \le \frac{4 k_2}{\rb k_1^2} (q
  \rb \ell) \notag \\ & = \mathcal{O}(q) \;,
\end{align}
where the second inequality follows from
Lemma~\ref{lem:simple_matrix}, the third inequality follows from
$(B_2)$ and Lemma~\ref{lem:conseq}, the fourth inequality holds because $M \preccurlyeq I$, and
the fifth inequality follows from $(B_3)$.  Combining
\eqref{eq:drift_4} and \eqref{eq:drift_5}, we have
\begin{equation}
  \label{eq:Abnd}
  A = \mathcal{O} \Big( \frac{p}{\rb} \Big) \mathrm{E}\Big( \frac{1}{\tau} \Big) +
  \mathcal{O}(q) \;.
\end{equation}
We now move onto $B$.  Recall that
\[
\tilde{\eta}_0 = \sqrt{q} \frac{\sum_{i=1}^{q} (\bar{y}_i -
  \bar{x}_i^\intercal \tilde{\eta}_{00} / \sqrt{q}) /
  z_i}{\sum_{i=1}^{q} 1/z_i} + \sqrt{\frac{q}{\sum_{i=1}^{q} 1/z_i}}
N_0 \;.
\]
Define $t_{\min} = \lambda + r_{\min} \tau$ and $z_{\min} =
t_{\min}/(r_{\min} \lambda \tau)$, and define $t_{\max}$ and
$z_{\max}$ analogously.  It's important to note that while $t_{\min}$
is, in fact, the smallest $t_i$, $z_{\min}$ is actually the largest
$z_i$, but we find this notation convenient.  We have $1 \le
z_{\min}/z_{\max} \le m$.  Now,
\begin{align}
  \label{eq:drift_6}
  B & = \mbox{E}(\tilde{\eta}_0^2 | \eta) \notag \\ & = \mbox{E} \bigg(
  \sqrt{q} \frac{\sum_{i=1}^{q} (\bar{y}_i - \bar{x}_i^\intercal
    \tilde{\eta}_{00} / \sqrt{q}) / z_i}{\sum_{i=1}^{q} 1/z_i} +
  \sqrt{\frac{q}{\sum_{i=1}^{q} 1/z_i}} N_0 \bigg)^2 \notag \\ & = q
  \mbox{E} \bigg( \frac{\sum_{i=1}^{q} (\bar{y}_i -
    \bar{x}_i^\intercal \tilde{\eta}_{00} / \sqrt{q}) /
    z_i}{\sum_{i=1}^{q} 1/z_i} \bigg)^2 + \mbox{E} \Big(
  \frac{q}{\sum_{i=1}^{q} 1/z_i} \Big) \notag \\ & \le q \mbox{E}
  \bigg( \frac{\sum_{i=1}^{q} (\bar{y}_i - \bar{x}_i^\intercal
    \tilde{\eta}_{00} / \sqrt{q})^2 / z_i}{\sum_{i=1}^{q} 1/z_i}
  \bigg) + \mbox{E} \Big( \frac{q}{\sum_{i=1}^{q} 1/z_i} \Big) \notag
  \\ & \le q \mbox{E} \bigg( \frac{\sum_{i=1}^{q} (\bar{y}_i -
    \bar{x}_i^\intercal \tilde{\eta}_{00} / \sqrt{q})^2 /
    z_{\max}}{\sum_{i=1}^{q} 1/z_{\min}} \bigg) + \mbox{E} (z_{\min})
  \notag \\ & \le m \mbox{E} \Big( \sum_{i=1}^{q} (\bar{y}_i -
  \bar{x}_i^\intercal \tilde{\eta}_{00} / \sqrt{q})^2 \Big) + \mbox{E}
  (z_{\min}) \notag \\ & \le 2 m \sum_{i=1}^{q} \bar{y}_i^2 +
  \frac{2m}{q} \mbox{E} \Big[ \tilde{\eta}_{00}^\intercal \Big(
    \sum_{i=1}^{q} \bar{x}_i \bar{x}_i^\intercal \Big)
    \tilde{\eta}_{00} \Big] + \mbox{E} \Big( \frac{t_{\min}}{r_{\min}
    \lambda \tau} \Big) \notag \\ & = \mathcal{O}(q) +
  \mathcal{O}(1) A + \mbox{E} \Big( \frac{1}{\lambda} \Big) +
  \mathcal{O} \Big( \frac{1}{\rb} \Big) \mbox{E} \Big( \frac{1}{\tau}
  \Big) \;,
\end{align}
where the third equality follows from the fact that $N_0$ has mean 0
and is independent of all of the other random vectors in the random
mapping, the first inequality is Jensen's, and the last line follows
from Remark~\ref{rem:1}.  Combining \eqref{eq:drift_2},
\eqref{eq:Abnd}, and \eqref{eq:drift_6} yields
\begin{align}
  & \label{eq:drift_7}
  \int_{\mathbb{R}^{p+q+1}} V(\tilde{\eta})  K(\eta, d\tilde{\eta})
  \notag
\\ = & \mathcal{O}(1) + \mathcal{O} \Big( \frac{1}{q} \Big) A
  + \mathcal{O} \Big( \frac{1}{q} \Big) B + \mathcal{O} \Big(
  \frac{1}{\rb} \Big) \mbox{E}\Big( \frac{1}{\tau} \Big) \notag
\\ = &
  \mathcal{O}(1) + \mathcal{O} \Big( \frac{1}{q} \Big) A + \mathcal{O}
  \Big( \frac{1}{q} \Big) \bigg[ \mathcal{O}(q) + \mathcal{O}(1) A +
    \mbox{E} \Big( \frac{1}{\lambda} \Big) + \mathcal{O} \Big(
    \frac{1}{\rb} \Big) \mbox{E} \Big( \frac{1}{\tau} \Big) \bigg]
   + \mathcal{O} \Big( \frac{1}{\rb} \Big) \mbox{E} \Big( \frac{1}{\tau}
  \Big) \notag
\\ = & \mathcal{O}(1) + \mathcal{O} \Big( \frac{1}{q}
  \Big) A + \mathcal{O} \Big( \frac{1}{q} \Big) \mbox{E} \Big(
  \frac{1}{\lambda} \Big) + \mathcal{O} \Big( \frac{1}{\rb} \Big)
  \mbox{E} \Big( \frac{1}{\tau} \Big) \notag
\\ = & \mathcal{O}(1) +
  \mathcal{O} \Big( \frac{1}{q} \Big) \bigg[ \mathcal{O} \Big(
    \frac{p}{\rb} \Big) \mathrm{E}\Big( \frac{1}{\tau} \Big) +
    \mathcal{O}(q) \bigg] + \mathcal{O} \Big( \frac{1}{q} \Big)
  \mbox{E} \Big( \frac{1}{\lambda} \Big) + \mathcal{O} \Big(
  \frac{1}{\rb} \Big) \mbox{E} \Big( \frac{1}{\tau} \Big) \notag
\\ = & \mathcal{O}(1) + \mathcal{O} \Big( \frac{1}{q} \Big) \mbox{E}
  \Big( \frac{1}{\lambda} \Big) + \mathcal{O} \Big( \frac{1}{\rb}
  \Big) \mbox{E} \Big( \frac{1}{\tau} \Big) \;,
\end{align}
where the last line follows from $(B_4)$.  Now using the definition of
$\lambda$ from the random mapping, we have
\begin{align}
  \label{eq:drift_8}
  \mbox{E} \Big( \frac{1}{\lambda} \Big) & = \frac{2b_1+
    \sum_{i=1}^{q} (\eta_i - \eta_0/\sqrt{q})^2}{q+2a_1-2} \notag \\ &
  \le \frac{2b_1 + 3 \sum_{i=1}^{q} (\eta_i + \bar{y} - \bar{y}_i)^2 +
    3 \sum_{i=1}^{q} (\bar{y}_i - \bar{y})^2 + \frac{3}{q}
    \sum_{i=1}^{q} \eta_0^2}{q+2a_1-2} \notag \\ & \le \frac{2b_1 + 6
    q \bar{y}^2 + 6 \sum_{i=1}^{q} \bar{y}_i^2}{q+2a_1-2} + \frac{3
    \sum_{i=1}^{q} (\eta_i + \bar{y} - \bar{y}_i)^2 + 3
    \eta_0^2}{q+2a_1-2} \notag \\ & \le \frac{2b_1 + 6 q \bar{y}^2 + 6
    \sum_{i=1}^{q} \bar{y}_i^2}{q+2a_1-2} + \frac{3 \sum_{i=1}^{q} r_i
    (\eta_i + \bar{y} - \bar{y}_i)^2 + 3 r_{\min}
    \eta_0^2}{r_{\min}(q+2a_1-2)} \notag \\ & \le \frac{2b_1 + 6 q
    \bar{y}^2 + 6 \sum_{i=1}^{q} \bar{y}_i^2}{q+2a_1-2} + \frac{3 q
    \rb V(\eta)}{r_{\min}(q+2a_1-2)} \notag \\ & = \mathcal{O}(1) +
  \mathcal{O}(1) V(\eta) \;.
\end{align}
Similarly,
\[
  \mbox{E} \Big( \frac{1}{\tau} \Big) = \frac{2b_2+ \sum_{i=1}^q
    \sum_{j=1}^{r_i} (y_{ij} - x_{ij}^\intercal \eta_{00}/\sqrt{q} -
    \eta_i)^2}{N+2a_2-2} \;.
\]
Now
\begin{align*}
  & \sum_{i=1}^q \sum_{j=1}^{r_i}  (y_{ij} - x_{ij}^\intercal
  \eta_{00}/\sqrt{q} - \eta_i)^2 \\ = & \sum_{i=1}^q \sum_{j=1}^{r_i}
  \big[ (\eta_i + \bar{y} - \bar{y}_i) + (x_{ij}^\intercal
    \eta_{00}/\sqrt{q} - \bar{y}) + (\bar{y}_i - y_{ij}) \big]^2 \\
  \le & 3 \sum_{i=1}^q r_i (\eta_i + \bar{y} - \bar{y}_i)^2 + 3
  \sum_{i=1}^q \sum_{j=1}^{r_i} (x_{ij}^\intercal \eta_{00}/\sqrt{q} -
  \bar{y})^2 + 3 \sum_{i=1}^q \sum_{j=1}^{r_i} (y_{ij} - \bar{y}_i)^2
  \\ \le & 3 q \rb V(\eta) + 3 \sum_{i=1}^q \sum_{j=1}^{r_i} (y_{ij} -
  \bar{y}_i)^2 \;.
\end{align*}
Thus,
\begin{align}
  \label{eq:drift_9}
  \mbox{E} \Big( \frac{1}{\tau} \Big) &\le \frac{2b_2 + 3 q \rb V(\eta)
    + 3 \sum_{i=1}^q \sum_{j=1}^{r_i} (y_{ij} -
    \bar{y}_i)^2}{N+2a_2-2} \notag \\
    & = \mathcal{O}(1) + \mathcal{O}(1) V(\eta)
  \;,
\end{align}
where the last line follows from $(B_3)$ and Remark~\ref{rem:1}.
Combining \eqref{eq:drift_7}, \eqref{eq:drift_8}, and
\eqref{eq:drift_9}, we have
\begin{align*}
  \int_{\mathbb{R}^{p+q+1}} V(\tilde{\eta}) K(\eta, d\tilde{\eta}) & =
  \mathcal{O}(1) + \mathcal{O} \Big( \frac{1}{q} \Big) \mbox{E} \Big(
  \frac{1}{\lambda} \Big) + \mathcal{O} \Big( \frac{1}{\rb} \Big)
  \mbox{E} \Big( \frac{1}{\tau} \Big) \\ & = \mathcal{O}(1) +
  \mathcal{O} \Big( \frac{1}{q} \Big) \Big[ \mbox{E} \Big(
    \frac{1}{\lambda} \Big) + \mbox{E} \Big( \frac{1}{\tau} \Big)
    \Big] \\ & = \mathcal{O}(1) + \mathcal{O} \Big( \frac{1}{q} \Big)
  \Big[ \mathcal{O}(1) + \mathcal{O}(1) V(\eta) \Big] \\ & =
  \mathcal{O}(1) + \mathcal{O} \Big( \frac{1}{q} \Big) V(\eta) \;,
\end{align*}
where the second equality follows from $(B_5)$.  Hence, there exist
$\zeta = \zeta(q) = \mathcal{O}(q^{-1})$ and a constant $L$ such that
% $L = \mathcal{O}(1)$ such
that
\[
  \int_{\mathbb{R}^{p+q+1}} V(\tilde{\eta}) K(\eta, d\tilde{\eta})
  \leq \zeta V(\eta) + L \,,
\]
and this completes the proof.
\end{proof}

\section{Contraction Condition: Proof of Proposition~\ref{prop:contraction}}
\label{sec:contraction}

We restate Proposition~\ref{prop:contraction} for convenience.

\begin{proposition}
  Assume that $(B_1)$-$(B_5)$ hold, and define
\[
{\cal C} = \big \{ (\eta,\eta') \in \mathbb{R}^{p+q+1} \times
\mathbb{R}^{p+q+1} : V(\eta) + V(\eta') \le q^{\delta/3} \big \} \;,
\]
where $V(\cdot)$ is the drift function defined in \eqref{eq:df}, and
$\delta$ is given in $(B_5)$.  Let $f$ be the random mapping defined
in Section~\ref{sec:main}.  There exist
\[
\gamma = \gamma(q) = \mathcal{O} \bigg( \sqrt{\frac{q^{2 +
      \delta}}{\bar{r}}} \vee \frac{1}{\sqrt{q}} \bigg) \;\;\;\;
\mbox{and} \;\;\;\; \gamma_0 = \gamma_0(q) = \mathcal{O}(\sqrt{q})
\]
and a positive integer $q_0$ such that for all $q \ge q_0$, we have
\[
\sup_{s\in [0,1]} \mathrm{E} \, \bigg \| \frac{\df f(\eta + s(\eta' -
  \eta))}{\df s} \bigg\| \leq
      \begin{cases}
          \gamma \|\eta'-\eta\| & (\eta, \eta') \in
            C \\ \gamma_0 \|\eta'-\eta\| &\text{otherwise} \,.
      \end{cases}
\]
\end{proposition}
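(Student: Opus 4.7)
The plan is to differentiate the random mapping $f(\eta_s)$, with $\eta_s = \eta + s(\eta'-\eta)$, block by block through the chain
\[
\eta_s \mapsto (\lambda^{(\eta_s)}, \tau^{(\eta_s)}) \mapsto (t_i^{(\eta_s)}, z_i^{(\eta_s)}, D_c^{(\eta_s)}, M^{(\eta_s)}, Q^{(\eta_s)}, v^{(\eta_s)}) \mapsto \tilde{\eta}^{(\eta_s)},
\]
and then to assemble the bound $\mathrm{E}\|\frac{\df f(\eta_s)}{\df s}\| \le \gamma \|\eta'-\eta\|$ on $\mathcal{C}$ and $\gamma_0\|\eta'-\eta\|$ off $\mathcal{C}$ via the triangle and Cauchy--Schwarz inequalities, with expectation taken over $(J_1, J_2, N_{00}, N_0, \ldots, N_q)$.

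The first substantive step is to bound the scalar derivatives $\df \lambda^{(\eta_s)}/\df s$ and $\df \tau^{(\eta_s)}/\df s$. Writing $\lambda^{(\eta_s)} = J_1/g_\lambda(\eta_s)$ and $\tau^{(\eta_s)} = J_2/g_\tau(\eta_s)$, each denominator is a convex positive quadratic in $\eta_s$ bounded below by $b_1$ or $b_2$. Cauchy--Schwarz applied to their chain-rule derivatives yields pointwise bounds in terms of $\|\eta'-\eta\|$ times $J_1\,g_\lambda(\eta_s)^{-3/2}$ (respectively $J_2\,g_\tau(\eta_s)^{-3/2}$). Using convexity in $s$, both $g_\lambda(\eta_s)$ and $g_\tau(\eta_s)$ are controlled from above by $V(\eta) + V(\eta') + \mathcal{O}(1)$, via the same manipulations used in \eqref{eq:drift_8} and \eqref{eq:drift_9}.

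These derivative bounds propagate mechanically through $\df t_i/\df s$, $\df z_i/\df s$, $\df D_c/\df s$, $\df M/\df s$, $\df Q/\df s$, and $\df v/\df s$, since each is a rational function of $\lambda^{(\eta_s)}, \tau^{(\eta_s)}$ with $X,Y$-dependent coefficients; after expectation the resulting bounds feature the moments $\mathrm{E}(1/\lambda)$ and $\mathrm{E}(1/\tau)$ already analyzed in the drift proof. The genuinely non-routine step is differentiating $\tilde{\eta}_{00}^{(\eta_s)} = v^{(\eta_s)} + \sqrt{q/\tau^{(\eta_s)}}\,(Q^{(\eta_s)})^{1/2} N_{00}$, which requires $\frac{\df}{\df s}(Q^{(\eta_s)})^{1/2}$. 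For this, Lemma~\ref{lem:sr} applied with $A = Q^{(\eta_s)}$ controls the largest eigenvalue of $\bigl(\frac{\df (Q^{(\eta_s)})^{1/2}}{\df s}\bigr)^{2}$ by $\lambda_{\max}\bigl((\frac{\df Q^{(\eta_s)}}{\df s})^2\bigr)/\bigl(4\lambda_{\min}(Q^{(\eta_s)})\bigr)$; the eigenvalue bound \eqref{eq:Qbnd} together with the matching estimate $\lambda_{\min}(Q^{(\eta_s)}) \ge 1/(q\rb k_2)$ (from $Q^{-1} \preccurlyeq X^\intercal X$ and $(B_2)$) makes this usable.

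To close, on $\mathcal{C}$ every $V$-dependent factor is at most $q^{\delta/3}$, and the dominant contribution comes from the $\tilde{\eta}_{00}^{(\eta_s)}$ block through the $p/\rb$ scaling of $Q$ combined with $p = \mathcal{O}(q)$ from $(B_4)$, producing an overall coefficient of order $\sqrt{q^{2+\delta}/\rb}$; the remaining blocks contribute at most $\mathcal{O}(1/\sqrt{q})$, yielding the stated $\gamma$. Off $\mathcal{C}$, no $V$-bound is invoked and the worst term scales like $\sqrt{q}$, giving $\gamma_0$. The main obstacle is the propagation of the chain rule through the matrix square root---which is precisely why Lemma~\ref{lem:sr} is included---and the careful tracking of constants needed so that the contraction coefficient on $\mathcal{C}$ exhibits the advertised dependence on $\delta$ and $\rb$.
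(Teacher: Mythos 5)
Your outline follows the same skeleton as the paper's proof---block-by-block differentiation of the random mapping, Cauchy--Schwarz on the $\lambda$ and $\tau$ derivatives, Lemma~\ref{lem:sr} for $\frac{\df}{\df s}(Q^{(\eta_s)})^{1/2}$, convexity of $V$ to get a uniform-in-$s$ bound on $\mathcal{C}$, and a cruder bound off $\mathcal{C}$---so at the level of strategy you are not proposing something different. But there are several places where the sketch papers over steps that do not in fact work the way you describe, and one point where your explanation of the final rate is wrong.

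First, the claim that the derivative bounds ``propagate mechanically'' through $\df M/\df s$, $\df Q/\df s$, and $\df v/\df s$ and ultimately yield the moments $\mathrm{E}(1/\lambda)$, $\mathrm{E}(1/\tau)$ from the drift proof is not accurate. The paper has to bound $(\df M/\df s)^2$ by decomposing the rank-one correction $(I-D_c)11^\intercal(I-D_c)/1^\intercal(I-D_c)1$ and applying Lemma~\ref{lem:simple_matrix} repeatedly; the controlling quantity that emerges is not $\mathrm{E}(1/\lambda)$ or $\mathrm{E}(1/\tau)$ but the composite
\[
\Phi = \Big(\frac{\phi^2\lambda}{J_1} + \frac{\phi\lambda}{J_2}\Big)\|\alpha\|^2, \qquad \phi = \frac{\lambda}{\rb\tau},
\]
and the four moments $U_1 = \mathrm{E}(\Phi/e_{\max}^2)$, $U_2 = \mathrm{E}(\Phi/(\tau e_{\max}^4))$, $U_3 = \mathrm{E}[(\phi\wedge 1)/J_1 + 1/J_2]\|\alpha\|^2$, and $U_4 = \mathrm{E}(\Phi/(\lambda e_{\max}^3))$. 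Getting from the raw matrix-derivative bounds to moments of these particular objects is where nearly all of the work lives (it occupies Appendix~\ref{app:C}), and it is not a routine consequence of ``rational functions with $X,Y$-dependent coefficients.''

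Second, your account of where the leading term comes from is incorrect. You attribute the $\sqrt{q^{2+\delta}/\rb}$ rate on $\mathcal{C}$ to the $\tilde{\eta}_{00}$ block via the $p/\rb$ scaling of $Q$. In the paper's accounting, the $\tilde{\eta}_{00}$ block and the $p/\rb$ factor produce the contributions $\mathcal{O}(q)U_1$ and $\mathcal{O}(p/\rb)U_2$, which under $(B_5)$ are of order $q^{3+\delta}/\rb^2$ and $q^{3+\delta}/\rb^3$ and are \emph{sub}dominant; the role of $(B_4)$ is to keep these terms from blowing up, not to create the leading order. The actual largest term on $\mathcal{C}$ is $\mathcal{O}(q^2)U_3 = \mathcal{O}(q^{2+\delta/3}/\rb)\|\alpha\|^2$, and it arises from differentiating the weights $z_i^{-1}$ in the $\tilde{\eta}_0$ block (the $T_{52}$ term), not from $Q$. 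Your sketch would therefore have you chasing the wrong term.

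Third, off $\mathcal{C}$ you say ``no $V$-bound is invoked and the worst term scales like $\sqrt{q}$,'' but you do not explain why the $\Phi$-moments stay bounded once the $V$-control on $\tau$ is lost. Without it, $\phi = \lambda/(\rb\tau)$ is uncontrolled. The paper closes this gap with the elementary but essential observation $e_{\max} = 1 + \lambda/(r_{\max}\tau) \ge (1+\phi)/m$, so that $\Phi/e_{\max}^2 \le m^2\big(\lambda/J_1 + \lambda/J_2\big)\|\alpha\|^2$, after which $\lambda \le J_1/b_1$ gives a clean bound. This cancellation between $\phi$ in the numerator and $e_{\max}$ in the denominator is what makes $\gamma_0 = \mathcal{O}(\sqrt{q})$ possible; a sketch that omits it has a genuine hole precisely at the point where the unconditional (off-$\mathcal{C}$) bound is needed.

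In short: right skeleton, but the propagation is far from mechanical, the source of the dominant term on $\mathcal{C}$ is misidentified, and the off-$\mathcal{C}$ argument as written does not close.
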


\begin{proof}
We will shorten $(\eta + s(\eta' -\eta))$ to $(\eta + s \alpha)$ where
$\alpha = (\eta' - \eta) = (\alpha_{00}^\intercal, \alpha_0, \alpha_1,
\dots, \alpha_q)^\intercal \in \mathbb{R}^{p+q+1}$.  Now
\[
f(\eta + s  \alpha) =
\begin{bmatrix}
\tilde{\eta}_{00}^{(\eta + s  \alpha)} \\
\tilde{\eta}_0^{(\eta + s  \alpha)} \\
\tilde{\eta}_1^{(\eta + s  \alpha)} \\
\vdots \\
\tilde{\eta}_q^{(\eta + s  \alpha)} \\
\end{bmatrix} \;.
\]
By Jensen's inequality, we have
\begin{align}
  \label{eq:outline}
  \Bigg\{ \mathrm{E} \bigg\| \frac{\df f(\eta + s \alpha)}{\df s}
  \bigg\| \Bigg\}^2 & \leq \mathrm{E} \bigg\| \frac{\df f(\eta + s
    \alpha)}{\df s} \bigg\|^2 \notag \\ & = \mathrm{E} \bigg\|
  \frac{\df \tilde{\eta}_{00}^{(\eta + s \alpha)} }{\df s} \bigg\|^2 +
  \mathrm{E} \bigg( \frac{\df \tilde{\eta}_0^{(\eta + s \alpha)}}{\df
    s} \bigg)^2 + \sum_{i=1}^q \mathrm{E} \bigg( \frac{\df
    \tilde{\eta}_i^{(\eta + s \alpha)}}{\df s} \bigg)^2 \;.
\end{align}
The next part of the proof is an extremely long and tedious
development of upper bounds for the three terms on the right-hand side
of \eqref{eq:outline}.  We relegate these calculations to
Appendix~\ref{app:C}, and simply state the resulting bound here, but
we must first introduce a bit of notation.  (In order to simplify
notation, we will sometimes omit the superscript $\eta + s\alpha$,
which should not cause any confusion.)  Define $\phi = \lambda/(\rb
\tau)$ and
\[
\Phi = \Big( \frac{\lambda^3}{\rb^2 \tau^2 J_1} + \frac{\lambda^2}{\rb
  \tau J_2} \Big) \norm{\alpha}^2 = \Big( \frac{\phi^2 \lambda}{J_1} +
\frac{\phi \lambda}{J_2} \Big) \norm{\alpha}^2 \;.
\]
Let $e_i = t_i/(r_i \tau)$, and define $e_{\min} = t_{\min}/(r_{\min}
\tau)$ and $e_{\max} = t_{\max}/(r_{\max} \tau)$.  So $e_{\min}$ and
$e_{\max}$ are actually the largest and smallest $e_i$, respectively.
Finally, define
\[
U_1 = \mathrm{E} \Big( \frac{\Phi}{e_{\max}^2} \Big) \;, \;\;\;\; U_2
= \mathrm{E} \Big( \frac{\Phi}{\tau e_{\max}^4} \Big) \;, \;\;\;\; U_3
= \mathrm{E} \bigg[ \big( \phi \wedge 1 \big) \frac{1}{J_1} +
  \frac{1}{J_2} \bigg] \|\alpha\|^2 \;,
\]
and
\[  U_4 =
\mathrm{E} \Big( \frac{\Phi}{\lambda e_{\max}^3} \Big) \;.
\]
Here is the key bound, which is derived in Appendix~\ref{app:C}:
\begin{equation}
  \label{eq:outline2}
  \Bigg\{ \mathrm{E} \bigg\| \frac{\df f(\eta + s \alpha)}{\df s}
  \bigg\| \Bigg\}^2 \leq \mathcal{O}(q) U_1 + \mathcal{O} \Big(
  \frac{p}{\rb} \Big) U_2 + \mathcal{O}(q^2) U_3 + \mathcal{O}(1) U_4
  + \mathcal{O} \Big( \frac{1}{q} \Big) \norm{\alpha}^2 \;.
\end{equation}
The next step is to develop upper bounds on $U_i$, $i=1,2,3,4$, in two
different cases: $(\eta,\eta') \in {\cal C}$ and $(\eta,\eta') \notin
{\cal C}$.  Define $g: \mathbb{R}^{q+p+1} \rightarrow [0,\infty)$ as
  follows
\[
g(w) = \frac{1}{2} \sum_{i=1}^q \sum_{j=1}^{r_i} \big[ y_{ij} -
  x_{ij}^\intercal w_{00}/\sqrt{q} - w_i \big]^2 \;,
\]
where $w \in \mathbb{R}^{q+p+1}$, $w_{00}$ denotes the first $p$
elements of $w$, and for $i=0,1,\dots,q$, $w_i$ denotes the
$(p+1+i)$th element.  Recall that our drift function $V:
\mathbb{R}^{p+q+1} \rightarrow [0, \infty)$ is given by
\begin{equation*}
  V(w) = \frac{1}{N} \sum_{i=1}^{q} \sum_{j=1}^{r_i} \big( \bar{y} -
  x_{ij}^\intercal w_{00}/\sqrt{q} \big)^2 + \frac{w_0^2}{q} +
  \frac{1}{N} \sum_{i=1}^{q} r_i (w_i + \bar{y} - \bar{y}_i)^2 \;.
\end{equation*}
We now show that $g$ can be bounded by a linear function of $V$.
Indeed,
\begin{align*}
& g(w) \\
& = \frac{1}{2}
\sum_{i=1}^q \sum_{j=1}^{r_i} \Big[ (w_i + \bar{y} - \bar{y}_i) +
  (x_{ij}^\intercal w_{00}/\sqrt{q} - \bar{y}) + (\bar{y}_i - y_{ij})
  \Big]^2 \\
& \le \sum_{i=1}^q \sum_{j=1}^{r_i} \Big[ (w_i + \bar{y}
       - \bar{y}_i) + (x_{ij}^\intercal w_{00}/\sqrt{q} - \bar{y}) \Big]^2
  + \sum_{i=1}^q \sum_{j=1}^{r_i} (\bar{y}_i - y_{ij})^2 \\
& \le
    2 \sum_{i=1}^q r_i (w_i + \bar{y} - \bar{y}_i)^2
  + 2 \sum_{i=1}^q \sum_{j=1}^{r_i} (x_{ij}^\intercal w_{00}/\sqrt{q} - \bar{y})^2
  + \sum_{i=1}^q \sum_{j=1}^{r_i}  y_{ij}^2 \\
& \le 2 N V(w) + N \ell \;.
\end{align*}
Recall from the random mapping that
\[
\tau = \tau^{(\eta + s \alpha)} = \frac{J_2}{b_2 + g(\eta + s \alpha)} \;,
\]
where $J_2 \sim \mathrm{Gamma}(N/2+a_2, 1)$.  Also recall that
\[
{\cal C} = \big \{ (\eta,\eta') \in \mathbb{R}^{p+q+1} \times
\mathbb{R}^{p+q+1} : V(\eta) + V(\eta') \le q^{\delta/3} \big \} \;.
\]
A straightforward calculation shows that $V$ is a convex function.
\\
\\
Now if $(\eta,\eta') \in {\cal C}$, then $V(\eta) \le q^{\delta/3}$
and $V(\eta') \le q^{\delta/3}$, and convexity implies that $V \big(
\eta + s(\eta' - \eta) \big) \le q^{\delta/3}$ for all $s \in [0,1]$.
Consequently, whenever $(\eta,\eta') \in {\cal C}$, we have
\[
g(\eta + s \alpha) \le 2 N V(\eta + s \alpha) + N \ell \le 2 \rb
q^{1+\delta/3} + \rb q \ell = \mathcal{O}(\rb q^{1+\delta/3}) \;.
\]
Note that this bound is free of $\eta$, $\eta'$ and $s$.  Clearly,
$\lambda = \lambda^{(\eta + s \alpha)} \le J_1/b_1$.  Hence, for
$(\eta,\eta') \in {\cal C}$, we have
\[
\phi = \frac{\lambda}{\rb \tau} \le \frac{J_1}{b_1} \frac{\big[b_2 +
    g(\eta+s\alpha) \big]}{\rb J_2} = \frac{J_1}{b_1}
\frac{\mathcal{O}(\rb q^{1+\delta/3})}{\rb J_2} =
\mathcal{O}(q^{1+\delta/3}) \frac{J_1}{J_2} \;,
\]
and
\begin{align*}
\Phi = \Big( \frac{\phi^2 \lambda}{J_1} + \frac{\phi \lambda}{J_2}
\Big) \norm{\alpha}^2
= \mathcal{O}(q^{2+2\delta/3})
\frac{J_1^2}{J_2^2} \norm{\alpha}^2 + \mathcal{O}(q^{1+\delta/3})
\frac{J_1^2}{J_2^2} \norm{\alpha}^2
= \mathcal{O}(q^{2+2\delta/3})
\frac{J_1^2}{J_2^2} \norm{\alpha}^2 \;.
\end{align*}
Continuing under the assumption that $(\eta,\eta') \in {\cal C}$, we
have
\begin{align*}
U_1 = \mathrm{E} \Big( \frac{\Phi}{e_{\max}^2} \Big) \le
\mathrm{E}(\Phi) &= \mathcal{O}(q^{2+2\delta/3}) \mathrm{E} \Big(
\frac{J_1^2}{J_2^2} \Big) \norm{\alpha}^2 \\
& =
\mathcal{O}(q^{2+2\delta/3}) \mathcal{O} \Big( \frac{q^2}{\rb^2 q^2}
\Big) \norm{\alpha}^2 \\
& = \mathcal{O} \Big( \frac{q^{2+\delta}}{\rb^2}
\Big) \norm{\alpha}^2 \;,
\end{align*}
\begin{align*}
U_2 = \mathrm{E} \Big( \frac{\Phi}{\tau e_{\max}^4} \Big) \le
\mathrm{E} \Big( \frac{\Phi}{\tau} \Big) & =
\mathcal{O}(q^{2+2\delta/3}) \mathrm{E} \Big( \frac{1}{\tau}
\frac{J_1^2}{J_2^2} \Big) \norm{\alpha}^2 \\ & =
\mathcal{O}(q^{2+2\delta/3}) \mathrm{E} \Big( \frac{b_2 + g(\eta + s
  \alpha)}{J_2} \frac{J_1^2}{J_2^2} \Big) \norm{\alpha}^2 \\
  & =
\mathcal{O}(q^{2+2\delta/3}) \mathcal{O}(\rb q^{1+\delta/3})
\mathrm{E} \Big( \frac{J_1^2}{J_2^3} \Big) \norm{\alpha}^2 \\
& = \mathcal{O}(\rb q^{3 + \delta})
\mathcal{O} \Big( \frac{q^2}{\rb^3 q^3} \Big) \norm{\alpha}^2 \\
& =
\mathcal{O} \Big( \frac{q^{2 + \delta}}{\rb^2} \Big) \norm{\alpha}^2
\;,
\end{align*}
\begin{align*}
U_3 = \mathrm{E} \bigg[ \big( \phi \wedge 1 \big) \frac{1}{J_1} +
  \frac{1}{J_2} \bigg] \|\alpha\|^2 & \le \mathrm{E} \bigg[
  \frac{\phi}{J_1} + \frac{1}{J_2} \bigg] \|\alpha\|^2 \\ & =
\mathrm{E} \bigg[ \mathcal{O}(q^{1+\delta/3}) \frac{1}{J_2} +
  \frac{1}{J_2} \bigg] \|\alpha\|^2 \\ & = \mathcal{O}(q^{1+\delta/3})
\mathrm{E} \Big( \frac{1}{J_2} \Big) \|\alpha\|^2 \\ & = \mathcal{O}
\Big( \frac{q^{\delta/3}}{\rb} \Big) \|\alpha\|^2 \;,
\end{align*}
and
\begin{align*}
U_4 = \mathrm{E} \Big( \frac{\Phi}{\lambda e_{\max}^3} \Big) \le
\mathrm{E} \Big( \frac{\Phi}{\lambda} \Big)
  & = \mathrm{E} \Big(
\frac{\phi^2}{J_1} + \frac{\phi}{J_2} \Big) \norm{\alpha}^2 \\
  & =
\mathrm{E} \Big( \mathcal{O} (q^{2+2\delta/3}) \frac{J_1}{J_2^2} +
\mathcal{O} (q^{1+\delta/3}) \frac{J_1}{J_2^2} \Big) \norm{\alpha}^2
\\
  & = \mathcal{O}
(q^{2+2\delta/3}) \mathcal{O} \Big( \frac{q}{\rb^2 q^2} \Big)
\norm{\alpha}^2 \\ & = \mathcal{O} \Big( \frac{q^{1+2\delta/3}}{\rb^2}
\Big) \norm{\alpha}^2 \;.
\end{align*}
Therefore, using \eqref{eq:outline2} and $(B_5)$, when $(\eta,\eta')
\in {\cal C}$, we have
\begin{align}
  \label{eq:outline3}
  & \Bigg\{ \mathrm{E} \bigg\| \frac{\df f(\eta + s \alpha)}{\df s}
  \bigg\| \Bigg\}^2 \notag \\
  = & \mathcal{O}(q) U_1 + \mathcal{O} \Big(
  \frac{p}{\rb} \Big) U_2 + \mathcal{O}(q^2) U_3 + \mathcal{O}(1) U_4
  + \mathcal{O} \Big( \frac{1}{q} \Big) \norm{\alpha}^2 \notag \\ = &
  \bigg[ \mathcal{O} \Big( \frac{q^{3+\delta}}{\rb^2} \Big) +
    \mathcal{O} \Big( \frac{q^{3+\delta}}{\rb^3} \Big) + \mathcal{O}
    \Big( \frac{q^{2+\delta/3}}{\rb} \Big) + \mathcal{O} \Big(
    \frac{q^{1+2\delta/3}}{\rb^2} \Big) \bigg] \norm{\alpha}^2 +
  \mathcal{O} \Big( \frac{1}{q} \Big) \norm{\alpha}^2 \notag \\
  = &
  \mathcal{O} \Big( \frac{q^{2+\delta}}{\rb} \vee \frac{1}{q} \Big)
  \norm{\alpha}^2 \;.
\end{align}
Our final major task is to develop bounds for $U_i$, $i=1,2,3,4$, in
the case where $(\eta,\eta') \notin {\cal C}$.  First,
\[
e_{\max} = 1 + \frac{\lambda}{r_{\max} \tau} \ge 1 + \frac{\lambda}{m
  \rb \tau} \ge \frac{1+\phi}{m} \;,
\]
and it follows that
\begin{align*}
U_1 = \mathrm{E} \Big( \frac{\Phi}{e_{\max}^2} \Big) \le \mathrm{E}
\bigg[ \frac{m^2}{(1+\phi)^2} \Big( \frac{\phi^2 \lambda}{J_1} +
  \frac{\phi \lambda}{J_2} \Big) \bigg] \norm{\alpha}^2 & \le m^2
\mathrm{E} \Big( \frac{\lambda}{J_1} + \frac{\lambda}{J_2} \Big)
\norm{\alpha}^2 \\ & \le m^2 \mathrm{E} \Big( \frac{1}{b_1} +
\frac{J_1}{b_1 J_2} \Big) \norm{\alpha}^2 \\ & = m^2 \mathcal{O}(1)
\norm{\alpha}^2 \\ & = \mathcal{O}(1) \norm{\alpha}^2 \;.
\end{align*}
Now,
\[
\frac{1}{\rb \tau e_{\max}} = \frac{1}{\rb \tau} \frac{r_{\max}
  \tau}{t_{\max}} \le \frac{m}{t_{\max}} \;,
\]
and it follows that
\begin{align*}
U_2 = \mathrm{E} \Big( \frac{\Phi}{\tau e_{\max}^4} \Big) = \rb
\mathrm{E} \Big( \frac{\Phi}{\rb \tau e_{\max}^4} \Big) & \le \rb
\mathrm{E} \bigg[ \frac{m}{t_{\max}} \Big( \frac{m}{1+\phi} \Big)^3
  \Big( \frac{\phi^2 \lambda}{J_1} + \frac{\phi \lambda}{J_2} \Big)
  \bigg] \norm{\alpha}^2 \\ & \le \rb m^4 \mathrm{E} \Big(
\frac{1}{J_1} + \frac{1}{J_2} \Big) \norm{\alpha}^2 \\ & = \rb
\mathcal{O} \Big( \frac{1}{q} \Big) \norm{\alpha}^2 \\ & = \mathcal{O}
\Big( \frac{\rb}{q} \Big) \norm{\alpha}^2 \;.
\end{align*}
Clearly,
\[
U_3 = \mathrm{E} \bigg[ \big( \phi \wedge 1 \big) \frac{1}{J_1} +
  \frac{1}{J_2} \bigg] \|\alpha\|^2 \le \mathrm{E} \Big( \frac{1}{J_1}
+ \frac{1}{J_2} \Big) \|\alpha\|^2 = \mathcal{O} \Big( \frac{1}{q}
\Big) \|\alpha\|^2 \;.
\]
Finally,
\begin{align*}
U_4 = \mathrm{E} \Big( \frac{\Phi}{\lambda e_{\max}^3} \Big) \le
\mathrm{E} \bigg[ \frac{1}{\lambda} \Big( \frac{m}{1+\phi} \Big)^3
  \Big( \frac{\phi^2 \lambda}{J_1} + \frac{\phi \lambda}{J_2} \Big)
  \bigg] \|\alpha\|^2
\le m^3 \mathrm{E} \Big( \frac{1}{J_1} +
\frac{1}{J_2} \Big) \|\alpha\|^2
= \mathcal{O} \Big( \frac{1}{q} \Big)
\|\alpha\|^2 \;.
\end{align*}
Thus, it follows from \eqref{eq:outline2}, $(B_4)$, and $(B_5)$ that
\begin{align}
  \label{eq:outline4}
  \Bigg\{ \mathrm{E} \bigg\| \frac{\df f(\eta + s \alpha)}{\df s}
  \bigg\| \Bigg\}^2 & = \mathcal{O}(q) U_1 + \mathcal{O} \Big(
  \frac{p}{\rb} \Big) U_2 + \mathcal{O}(q^2) U_3 + \mathcal{O}(1) U_4
  + \mathcal{O} \Big( \frac{1}{q} \Big) \norm{\alpha}^2 \notag \\ & =
  \bigg[ \mathcal{O}(q) + \mathcal{O}(1) + \mathcal{O}(q) +
    \mathcal{O} \Big( \frac{1}{q} \Big) \bigg] \norm{\alpha}^2 +
  \mathcal{O} \Big( \frac{1}{q} \Big) \norm{\alpha}^2 \notag \\ & =
  \mathcal{O}(q) \norm{\alpha}^2 \;.
\end{align}
Combining \eqref{eq:outline3} and \eqref{eq:outline4}, we have
\begin{equation*}
  \Bigg\{ \mathrm{E} \bigg\| \frac{\df f(\eta + s \alpha)}{\df s}
  \bigg\| \Bigg\}^2 =
  \begin{cases}
      \mathcal{O}\big( \frac{q^{2 + \delta}}{\bar{r}} \vee \frac{1}{q}
      \big) \|\alpha\|^2 \qquad & \text{if } (\eta, \eta') \in
      \mathcal{C} \\ \mathcal{O}(q) \|\alpha\|^2 \qquad &
      \text{otherwise.}
   \end{cases}
\end{equation*}
Since the right-hand side does not depend on $s$, the proposition follows.
% Therefore, there exist $\gamma < 1$ and $\gamma_0 < \infty$ (both
% depending on $q$) such that when $q$ is large enough,
% \[
% \sup_{s\in [0,1]} \mathrm{E} \, \bigg \| \frac{\df f(\eta + s(\eta' -
%   \eta))}{\df s} \bigg\| \leq
%       \begin{cases}
%           \gamma \|\eta'-\eta\| & (\eta, \eta') \in
%             {\cal C} \\ \gamma_0 \|\eta'-\eta\| & \text{otherwise,}
%       \end{cases}
% \]
% where
% \[
% \gamma = \mathcal{O} \Big( \sqrt{\frac{q^{2 + \delta}}{\bar{r}}} \vee
% \frac{1}{\sqrt{q}} \Big) \;\;\;\; \mbox{and} \;\;\;\; \gamma_0 =
% \mathcal{O}(\sqrt{q}).
% \]
\end{proof}

\newpage

\noindent{\LARGE \bf Appendix}
\appendix

\section{Derivations of Conditional Distributions}
\label{app:A}

Here we derive the distributions of $\eta_{00} \,|\, \lambda,\tau, Y$
and $\eta_0 \,|\, \eta_{00}, \lambda,\tau, Y$.  Recall that $t_i =
\lambda + r_i \tau$.  First, the conditional density of $(\eta_{00},
\eta_0)$ given $(\lambda,\tau, Y)$ is proportional to the following expression (where the constant of proportionality can depend on $\lambda$, $\tau$ and $Y$),
\begin{align}
& \int_{\mathbb{R}^q} \pi(\eta \,|\, \lambda,\tau, Y) \, d\eta_1
  \cdots d\eta_q \notag \\
\propto & \int_{\mathbb{R}^q} \exp \bigg\{
  -\frac{\lambda}{2} \sum_{i=1}^{q}(\eta_i - \eta_0/\sqrt{q})^2
  -\frac{\tau}{2} \sum_{i=1}^{q}\sum_{j=1}^{r_i} (\eta_i +
  x_{ij}^\intercal \eta_{00}/\sqrt{q} - y_{ij})^2 \bigg\} d\eta_1
  \cdots d\eta_q \notag \\
= & \prod_{i=1}^q \int_{\mathbb{R}} \exp
  \bigg\{ -\frac{\lambda}{2} (\eta_i - \eta_0/\sqrt{q})^2
  -\frac{\tau}{2} \sum_{j=1}^{r_i} (\eta_i + x_{ij}^\intercal
  \eta_{00}/\sqrt{q} - y_{ij})^2 \bigg\} d\eta_i \notag \\
= &
  \prod_{i=1}^{q} \exp \bigg\{ - \frac{\lambda}{2} \frac{\eta_0^2}{q}
  - \frac{\tau}{2} \sum_{j=1}^{r_i} (x_{ij}^\intercal
  \eta_{00}/\sqrt{q} - y_{ij})^2 \bigg\} \notag \\
& \hspace*{25mm} \times
  \int_{\mathbb{R}} \exp \bigg\{ - \frac{t_i}{2} \bigg[ \eta_i -
    \frac{\lambda \eta_0/\sqrt{q} + r_i \tau (\bar{y}_i -
      \bar{x}_i^\intercal \eta_{00}/\sqrt{q})}{t_i} \bigg]^2 \notag \\
& \hspace*{45mm} +
  \frac{t_i}{2} \bigg( \frac{\lambda \eta_0/\sqrt{q} + r_i \tau
    (\bar{y}_i - \bar{x}_i^\intercal \eta_{00}/\sqrt{q})}{t_i}
  \bigg)^2 \bigg\} \, d\eta_i \label{eq:a_0} \\
\propto & \prod_{i=1}^{q} \exp
  \bigg\{ - \frac{\lambda}{2} \frac{\eta_0^2}{q} - \frac{\tau}{2}
  \sum_{j=1}^{r_i} (x_{ij}^\intercal \eta_{00}/\sqrt{q} - y_{ij})^2
  \bigg\}  \notag \\
& \hspace*{40mm} \times \exp \bigg\{ \frac{t_i}{2} \bigg( \frac{\lambda
    \eta_0/\sqrt{q} + r_i \tau (\bar{y}_i - \bar{x}_i^\intercal
    \eta_{00}/\sqrt{q})}{t_i} \bigg)^2 \bigg\} \notag \\
= &
  \prod_{i=1}^{q} \exp \bigg\{ - \frac{\lambda}{2} \frac{\eta_0^2}{q}
  + \frac{\lambda^2}{2 t_i} \frac{\eta_0^2}{q} + \frac{r_i \lambda
    \tau}{\sqrt{q} t_i} (\bar{y}_i - \bar{x}_i^\intercal
  \eta_{00}/\sqrt{q}) \eta_0 \bigg\} \notag \\
& \hspace*{15mm} \times
  \exp \bigg\{ - \frac{\tau}{2} \sum_{j=1}^{r_i} (x_{ij}^\intercal
  \eta_{00}/\sqrt{q} - y_{ij})^2 + \frac{(r_i \tau)^2}{2 t_i}
  (\bar{y}_i - \bar{x}_i^\intercal \eta_{00}/\sqrt{q})^2 \bigg\} \;. \label{eq:a_1}
\end{align}
Note that the conditional density of $\eta_i$ given $(\eta_0, \eta_{00}, \lambda, \tau, Y)$ can be gleaned from \eqref{eq:a_0}.
% Note that \eqref{eq:a_0} shows the conditional density of $\eta_i$ given $(\eta_0, \eta_{00}, \lambda, \tau, Y)$.
Now recall that $z_i = t_i/(r_i \lambda \tau)$.  It follows from
\eqref{eq:a_1} that the conditional density of $\eta_0$ given
$(\eta_{00},\lambda,\tau, Y)$ is proportional to
\begin{align*}
& \prod_{i=1}^{q} \exp \bigg\{ - \frac{\lambda}{2} \Big(1 -
\frac{\lambda}{t_i} \Big) \frac{\eta_0^2}{q} + \frac{r_i \lambda
  \tau}{\sqrt{q} t_i} (\bar{y}_i - \bar{x}_i^\intercal
\eta_{00}/\sqrt{q}) \eta_0 \bigg\} \\
= & \prod_{i=1}^{q} \exp \bigg\{ -
\frac{1}{2 q z_i} \eta_0^2 + \frac{1}{\sqrt{q} z_i} (\bar{y}_i -
\bar{x}_i^\intercal \eta_{00}/\sqrt{q}) \eta_0 \bigg\} \\
= & \exp
\bigg\{ - \eta_0^2 \sum_{i=1}^{q} \frac{1}{2 q z_i} + \eta_0
\sum_{i=1}^{q} \frac{1}{\sqrt{q} z_i} (\bar{y}_i - \bar{x}_i^\intercal
\eta_{00}/\sqrt{q}) \bigg\} \;.
\end{align*}
Thus,
\[
\eta_0 \,|\, \eta_{00},\lambda,\tau, Y \sim \mbox{N} \bigg(
\frac{\sqrt{q} \sum_{i=1}^q (\bar{y}_i - \bar{x}^\intercal_i
  \eta_{00}/\sqrt{q})/z_i}{\sum_{i=1}^q 1/z_i}, \frac{q}{\sum_{i=1}^q
  1/z_i} \bigg) \;.
\]
The conditional density of $\eta_{00}$ given $(\lambda,\tau, Y)$ is
proportional to the integral of \eqref{eq:a_1} with respect to
$\eta_0$, which is given by
\begin{align}
  \label{eq:a_2}
  & \exp \bigg\{ \frac{1}{2} \bigg( \sum_{i=1}^q \frac{1}{z_i}
  \bigg)^{-1} \bigg( \sum_{i=1}^q (\bar{y}_i - \bar{x}_i^\intercal
  \eta_{00}/\sqrt{q})/z_i \bigg)^2 \bigg\} \notag \\
  & \hspace*{10mm} \times \exp
  \bigg\{ - \frac{\tau}{2} \sum_{i=1}^q \sum_{j=1}^{r_i}
  (x_{ij}^\intercal \eta_{00}/\sqrt{q} - y_{ij})^2 + \sum_{i=1}^q
  \frac{(r_i \tau)^2}{2 t_i} (\bar{y}_i - \bar{x}_i^\intercal
  \eta_{00}/\sqrt{q})^2 \bigg\} \notag \\
& = \exp \bigg\{
  \frac{\tau}{2} \bigg( \sum_{i=1}^q \sum_{j=1}^{r_i}
  \frac{\lambda}{t_i} \bigg)^{-1} \bigg( \sum_{i=1}^q \sum_{j=1}^{r_i}
  \frac{\lambda (\bar{y}_i - \bar{x}_i^\intercal
    \eta_{00}/\sqrt{q})}{t_i} \bigg)^2  \notag \\
& \hspace*{10mm} -
  \frac{\tau}{2}(X\eta_{00}/\sqrt{q} - Y)^\intercal
  (X\eta_{00}/\sqrt{q} - Y)
  +
  \frac{\tau}{2} \sum_{i=1}^q \sum_{j=1}^{r_i} \frac{r_i \tau}{t_i}
  (\bar{y}_i - \bar{x}_i^\intercal \eta_{00}/\sqrt{q})^2 \bigg\}
  \notag \\
& = \exp \bigg\{ \frac{\tau}{2} \Big( 1^\intercal (I -
  D_c) 1 \Big)^{-1} \Big( 1^\intercal (I - D_c) (\bar{Y} - \bar{X}
  \eta_{00}/\sqrt{q}) \Big)^{2}  \notag \\
& \hspace*{5mm} - \frac{\tau}{2}(X\eta_{00}/\sqrt{q} -
  Y)^\intercal (X\eta_{00}/\sqrt{q} - Y)
  +
  \frac{\tau}{2} (\bar{X} \eta_{00}/\sqrt{q} - \bar{Y})^\intercal D_c
  (\bar{X} \eta_{00}/\sqrt{q} - \bar{Y}) \bigg \} \notag \\
& = \exp
  \bigg\{ \frac{\tau}{2} (\bar{X} \eta_{00}/\sqrt{q} - \bar{Y}
  )^\intercal \bigg( \frac{(I - D_c) 1 1^\intercal (I -
    D_c)}{1^\intercal (I - D_c) 1} \bigg) (\bar{X} \eta_{00}/\sqrt{q}
  - \bar{Y} ) \notag \\
& \hspace*{5mm} -
  \frac{\tau}{2}(X\eta_{00}/\sqrt{q} - Y)^\intercal
  (X\eta_{00}/\sqrt{q} - Y) + \frac{\tau}{2} (\bar{X}
  \eta_{00}/\sqrt{q} - \bar{Y})^\intercal D_c (\bar{X}
  \eta_{00}/\sqrt{q} - \bar{Y}) \bigg \} \notag \\
& = \exp \bigg\{
  \frac{\tau}{2} (\bar{X} \eta_{00}/\sqrt{q} - \bar{Y} )^\intercal
  \bigg( D_c + \frac{(I - D_c) 1 1^\intercal (I - D_c)}{1^\intercal (I
    - D_c) 1} \bigg) (\bar{X} \eta_{00}/\sqrt{q} - \bar{Y} ) \notag
  \\
& \hspace*{65mm} - \frac{\tau}{2}(X\eta_{00}/\sqrt{q} -
  Y)^\intercal (X\eta_{00}/\sqrt{q} - Y) \bigg \} \;.
\end{align}
Now recall that
\[
M = D_c + \frac{(I - D_c) 1 1^\intercal (I - D_c)}{1^\intercal (I -
  D_c) 1} \;,
\]
so we can rewrite \eqref{eq:a_2} as
\[
  \exp \bigg\{ \frac{\tau}{2} (\bar{X} \eta_{00}/\sqrt{q} - \bar{Y}
  )^\intercal M (\bar{X} \eta_{00}/\sqrt{q} - \bar{Y} ) -
  \frac{\tau}{2}(X\eta_{00}/\sqrt{q} - Y)^\intercal
  (X\eta_{00}/\sqrt{q} - Y) \bigg \} \;.
\]
A simple complete the square argument shows that
\[
\eta_{00} \,|\, \lambda,\tau, Y \sim \mbox{N}_p \Big( v, \frac{q}{\tau}
Q \Big) \;.
\]
where $Q = (X^\intercal X - \bar{X}^\intercal M \bar{X})^{-1}$ and $v = \sqrt{q} \, Q
(X^\intercal Y - \bar{X}^\intercal M \bar{Y})$.

\section{Proofs of Supporting Lemmas}
\label{app:B}

\begin{proof}[Proof of Lemma~\ref{lem:conseq}.]
Part $(i)$: Let
\begin{equation*}
  R = \bigoplus_{i=1}^{q} \frac{1}{r_i} J_{r_i},
\end{equation*}
where, as usual, $J_{r_i}$ denotes an $r_i \times r_i$ matrix of 1s.
Note that $R$ is symmetric and idempotent.  It's easy to see that
$\bar{X} = RX$ and $\bar{Y} = RY$.  Now
\[
X^\intercal X = X^\intercal (R + I - R) X = X^\intercal R^2 X +
X^\intercal (I - R)^2 X \succcurlyeq (RX)^\intercal (RX) =
\bar{X}^\intercal \bar{X} \,.
\]

\noindent Part $(ii)$: By part $(i)$,
\[
\sum_{i=1}^q \bar{x}_i \bar{x}^\intercal_i \preccurlyeq
\frac{1}{r_{\min}} \sum_{i=1}^q \sum_{j=1}^{r_i} \bar{x}_i
\bar{x}^\intercal_i = \frac{1}{r_{\min}} \bar{X}^\intercal \bar{X}
\preccurlyeq \frac{1}{r_{\min}} X^\intercal X \,.
\]

\noindent Part $(iii)$: We have
\[
\sum_{i=1}^{q} \bar{y}_i^2 \le \frac{1}{r_{\min}} \sum_{i=1}^q
\sum_{j=1}^{r_i} \bar{y}_i^2 = \frac{1}{r_{\min}} \bar{Y}^\intercal
\bar{Y} \le \frac{1}{r_{\min}} Y^\intercal Y \,.
\]

\noindent Part $(iv)$: Define
\[a^\intercal = \Big( \frac{1}{q r_1}
1^\intercal_{r_1}, \cdots, \frac{1}{q r_q} 1^\intercal_{r_q} \Big),\]
and note that $\bar{y} = a^\intercal Y$.  Now
\[
a^\intercal a = \sum_{i=1}^q r_i \Big( \frac{1}{q r_i} \Big)^2 \le
\sum_{i=1}^q \frac{1}{q^2 r_{\min}} = \frac{1}{q r_{\min}} \;.
\]
Hence,
\[
\bar{y}^2 = (a^\intercal Y)^2 \le (a^\intercal a) (Y^\intercal Y) \le
\frac{Y^\intercal Y}{q r_{\min}} \;.
\]
\end{proof}

\begin{proof}[Proof of Lemma~\ref{lem:simple_matrix}.]

For any vector $x$,
\begin{align*}
  x^\intercal C^\intercal D x = x^\intercal D^\intercal C x & =
  (Cx)^\intercal Dx \\ & \leq \sqrt{(Cx)^\intercal Cx}
  \sqrt{(Dx)^\intercal Dx} \\ & = \sqrt{x^\intercal C^\intercal C x}
  \sqrt{x^\intercal D^\intercal D x} \\ & \leq \frac{1}{2}
  (x^\intercal C^\intercal C x + x^\intercal D^\intercal D x).
\end{align*}
Therefore, for any $x$,
\begin{align*}
  x^\intercal (C+D)^\intercal (C+D) x & = x^\intercal C^\intercal C x
  + x^\intercal D^\intercal D x + x^\intercal C^\intercal D x +
  x^\intercal D^\intercal C x \\ & \le 2 (x^\intercal C^\intercal C x
  + x^\intercal D^\intercal D x) \\ & = x^\intercal 2 ( C^\intercal C
  + D^\intercal D) x \,.
\end{align*}
\end{proof}

\begin{proof}[Proof of Lemma~\ref{lem:sr}.]
We use contradiction. Suppose that there exists an $x$ such that
\[
\lambda_{\max} \bigg\{ \Big( \frac{\df A^{1/2}}{\df x} \Big)^2
\bigg\} > \frac{\lambda_{\max} \Big\{ \Big( \frac{\df A}{\df x}
  \Big)^2 \Big\} }{4 \lambda_{\min}(A)} \;.
\]
Then either
\begin{enumerate}[label = {case (\arabic*)}:, wide=0pt, leftmargin=*] % clashes with elsarticle.cls
\item $\lambda_{\max} \Big( \frac{\df A^{1/2}}{\df x} \Big) >
  \frac{1}{2 \sqrt{\lambda_{\min}(A)}} \sqrt{\lambda_{\max} \Big\{
    \Big( \frac{\df A}{\df x} \Big)^2 \Big\}}$, or
\item $\lambda_{\min} \Big( \frac{\df A^{1/2}}{\df x} \Big) <
  - \frac{1}{2 \sqrt{\lambda_{\min}(A)}} \sqrt{\lambda_{\max} \Big\{
    \Big( \frac{\df A}{\df x} \Big)^2 \Big\}}$ \;.
\end{enumerate}
Suppose that case (1) holds.  Then there exists $x_0$ such that
\[
\lambda_{\max} \Big( \frac{\df A^{1/2}}{\df x} \Big|_{x = x_0}
\Big) > \frac{1}{2 \sqrt{\lambda_{\min}(A|_{x = x_0})}}
\sqrt{\lambda_{\max} \Big\{ \Big( \frac{\df A}{\df x} \Big|_{x = x_0}
  \Big)^2 \Big\}} \;.
\]
Let $A_0 = A(x_0)$,
\[
D_0 = \frac{\df A}{\df x} \Big|_{x = x_0} \;\;\;\; \mbox{and} \;\;\;\;
R_0 = \frac{\df A^{1/2}}{\df x} \Big|_{x = x_0} \;.
\]
Let $\lambda_0 = \lambda_{\max}(R_0)$, and let $\xi_0$ denote the
corresponding (normalized) eigenvector.  Then
\[
2 \lambda_0 \sqrt{\lambda_{\min}(A_0)} > \sqrt{\lambda_{\max}(D_0^2)}  \;.
\]
Now since $A_0^{1/2}$ and $R_0$ are both symmetric, we have
\begin{align*}
  \xi_0^\intercal D_0 \xi_0 & = \xi_0^\intercal \Big( \frac{\df A}{\df
    x} \Big|_{x = x_0} \Big) \xi_0 \\ & = \xi_0^\intercal \bigg\{
  \Big( \frac{\df A^{1/2}}{\df x} \Big|_{x = x_0} \Big)
  A_0^{1/2} + A_0^{1/2} \Big( \frac{\df
    A^{1/2}}{\df x} \Big|_{x = x_0} \Big) \bigg\} \xi_0 \\ & =
  \xi_0^\intercal \big\{ R_0 A_0^{1/2} + A_0^{1/2} R_0
  \big\} \xi_0 \\ & = 2 \xi_0^\intercal R_0 A_0^{1/2} \xi_0
  \\ & = 2 \lambda_0 \xi_0^\intercal A_0^{1/2} \xi_0 \\ & \ge
  2 \lambda_0 \sqrt{\lambda_{\min}(A_0)} \xi_0^\intercal \xi_0 \\
  & > \sqrt{\lambda_{\max}(D_0^2)} \\ &
  \ge \lambda_{\max}(D_0) \;,
\end{align*}
but this is a contradiction.  An analogous argument shows that case (2) also
leads to a contradiction.
\end{proof}

\section{Development of the Upper Bound \eqref{eq:outline2}}
\label{app:C}

In this Appendix, we develop upper bounds for each of the three terms
on the right-hand side of \eqref{eq:outline}, which together yield
\eqref{eq:outline2}.  Of course, the components of $f(\eta + s
\alpha)$ depend on $\lambda^{(\eta + s \alpha)}$ and $\tau^{(\eta + s
  \alpha)}$, so we begin with these.  According to the random mapping,
we have
\begin{equation*}
  \label{eq:lambda12}
    \begin{split}
      \lambda^{(\eta + s \alpha)} & = \frac{J_1}{b_1+ \frac{1}{2}
        \sum_{i=1}^q (\eta_i + s \alpha_i - (\eta_0 + s \alpha_0)/
        \sqrt{q})^2} \\ \tau^{(\eta + s \alpha)} & = \frac{J_2}{b_2 +
        \frac{1}{2}\sum_{i=1}^q \sum_{j=1}^{r_i} \big[ y_{ij} -
          x_{ij}^\intercal (\eta_{00} + s \alpha_{00})/\sqrt{q} -
          (\eta_i + s \alpha_i) \big]^2} \;.
    \end{split}
\end{equation*}
We will require upper bounds on $\big(\frac{\df \lambda^{(\eta +
    s\alpha)}}{\df s}\big)^2$ and $\big(\frac{\df \tau^{(\eta +
    s\alpha)}}{\df s}\big)^2$.  We have
\begin{align*}
  \frac{\df \lambda^{(\eta + s\alpha)}}{\df s} & = \frac{- J_1
    \sum_{i=1}^q \big( \eta_i + s \alpha_i - (\eta_0 + s
    \alpha_0)/\sqrt{q} \big) (\alpha_i - \alpha_0 /
    \sqrt{q})}{\big[b_1 + \frac{1}{2}\sum_{i=1}^q (\eta_i + s \alpha_i
      - (\eta_0 + s \alpha_0) / \sqrt{p})^2\big]^2} \\ & = \frac{-
    \big( \lambda^{(\eta + s\alpha)} \big)^2}{J_1} \sum_{i=1}^q \big(
  \eta_i + s \alpha_i - (\eta_0 + s \alpha_0)/\sqrt{q} \big) (\alpha_i
  - \alpha_0 / \sqrt{q}) \;.
\end{align*}
Thus, by Cauchy-Schwarz,
\begin{align}
  \label{eq:ls}
  & \bigg(\frac{\df \lambda^{(\eta + s\alpha)}}{\df s}\bigg)^2 \notag \\
  \le &
  \frac{\big( \lambda^{(\eta + s\alpha)} \big)^4}{J^2_1} \sum_{i=1}^q
  \big( \eta_i + s \alpha_i - (\eta_0 + s \alpha_0)/\sqrt{q} \big)^2
  \sum_{i=1}^q (\alpha_i - \alpha_0 / \sqrt{q})^2 \notag \\
  \le &
  \frac{4 \big( \lambda^{(\eta + s\alpha)} \big)^4}{J^2_1} \Big[ b_1 +
    \frac{1}{2} \sum_{i=1}^q \big( \eta_i + s \alpha_i - (\eta_0 + s
    \alpha_0)/\sqrt{q} \big)^2 \Big] \sum_{i=1}^q (\alpha^2_i +
  \alpha^2_0/q) \notag \\
  \le & \frac{4 \big( \lambda^{(\eta +
      s\alpha)} \big)^4}{J^2_1} \frac{J_1}{\lambda^{(\eta + s\alpha)}}
  \norm{\alpha}^2 \notag \\
  = & \frac{4 \big( \lambda^{(\eta +
      s\alpha)} \big)^3}{J_1} \norm{\alpha}^2 \;.
\end{align}
A similar argument yields
\begin{align*}
  \bigg(\frac{\df \tau^{(\eta + s\alpha)}}{\df s}\bigg)^2 & \le
  \frac{4 \big( \tau^{(\eta + s\alpha)} \big)^3}{J_2} \Bigg\{
  \sum_{i=1}^q \sum_{j=1}^{r_i} \alpha_i^2 + \frac{1}{q}
  \alpha_{00}^\intercal \bigg( \sum_{i=1}^q \sum_{j=1}^{r_i} x_{ij}
  x_{ij}^\intercal \bigg) \alpha_{00} \Bigg\} \;.
\end{align*}
Using conditions $(B_1)$, $(B_2)$ and Remark~\ref{rem:1}, we have
\begin{align}
  \label{eq:ts}
  \bigg(\frac{\df \tau^{(\eta + s\alpha)}}{\df s}\bigg)^2 & \le
  \frac{4 \big( \tau^{(\eta + s\alpha)} \big)^3}{J_2} \bigg\{ r_{\max}
  \sum_{i=1}^q \alpha_i^2 + \frac{\rb q k_2}{q} \alpha_{00}^\intercal
  \alpha_{00} \bigg\} \notag \\ & \le \frac{4 \big( \tau^{(\eta +
      s\alpha)} \big)^3}{J_2} \bigg\{ m \rb \sum_{i=1}^q \alpha_i^2 +
  \rb k_2 \alpha_{00}^\intercal \alpha_{00} \bigg\} \notag \\ & \le
  \frac{4 \rb (m + k_2) \big( \tau^{(\eta + s\alpha)} \big)^3}{J_2}
  \norm{\alpha}^2 \;.
\end{align}
It now follows from \eqref{eq:ls} and \eqref{eq:ts} that there exists
a positive constant $\ell_1$ such that
\begin{equation}
  \label{eq:ls&ts}
  \bigg(\frac{\df \lambda^{(\eta + s\alpha)}}{\df s}\bigg)^2 \le
  \frac{\ell_1 \big( \lambda^{(\eta + s\alpha)} \big)^3}{J_1}
  \norm{\alpha}^2 \;\; \mbox{and} \;\; \bigg(\frac{\df
    \tau^{(\eta + s\alpha)}}{\df s}\bigg)^2 \le \frac{\ell_1 \rb \big(
    \tau^{(\eta + s\alpha)} \big)^3}{J_2} \norm{\alpha}^2 \;.
\end{equation}
In order to simplify notation, we omit the superscript $\eta +
s\alpha$ in the remainder of the proof.  This should not cause any
confusion.  We now develop an upper bound for $\mathrm{E} \big\|
\frac{\df \tilde{\eta}_{00}}{\df s} \big\|^2$.  Recall that
$\tilde{\eta}_{00} = v + \sqrt{\frac{q}{\tau}} Q^{1/2}
N_{00}$, where $Q = (X^\intercal X - \bar{X}^\intercal M
\bar{X})^{-1}$ and $v = \sqrt{q} \; Q (X^\intercal Y -
\bar{X}^\intercal M \bar{Y})$.  Since $R X = \bar{X}$, $R Y =
\bar{Y}$, $R^2 = R$, and $R$ and $M$ commute, we have
\[
\tilde{\eta}_{00} = \sqrt{q} \; Q (X^\intercal Y - \bar{X}^\intercal M
\bar{Y}) + \sqrt{\frac{q}{\tau}} Q^{\frac{1}{2}} N_{00} = \sqrt{q} \,
Q X^\intercal (I - M R) Y + \sqrt{\frac{q}{\tau}} Q^{\frac{1}{2}}
N_{00} \;.
\]
So we have
\begin{align*}
\frac{\df \tilde{\eta}_{00}}{\df s}
& = \sqrt{q} \frac{\df}{\df s}
\Big( Q X^\intercal (I - M R) Y + \frac{1}{\tau^{\frac{1}{2}}}
Q^{\frac{1}{2}} N_{00} \Big)
\\ & = \sqrt{q} \Bigg[ \bigg( \frac{\df
    Q}{\df s} \bigg) X^\intercal (I - M R) Y - Q X^\intercal \bigg(
  \frac{\df M}{\df s} \bigg) R Y - \frac{1}{2 \tau^{\frac{3}{2}}}
  \bigg( \frac{\df \tau}{\df s} \bigg) Q^{\frac{1}{2}} N_{00}
 +
  \frac{1}{\tau^{\frac{1}{2}}} \bigg( \frac{\df Q^{\frac{1}{2}}}{\df
    s} \bigg) N_{00} \Bigg] \;.
\end{align*}
Thus,
\begin{align*}
  \Big\| \frac{\df \tilde{\eta}_{00}}{\df s} \Big\|^2 \le 4 q \Big\|
  \bigg( \frac{\df Q}{\df s} \bigg) X^\intercal (I - M R) Y \Big\|^2 +
  4 q \Big\| Q X^\intercal  \bigg( \frac{\df M}{\df s} \bigg) R Y
  \Big\|^2 \\
   + \frac{q (\tau')^2}{\tau^3} \Big\| Q^{\frac{1}{2}}
  N_{00} \Big\|^2 + \frac{4q}{\tau} \Big\| \bigg( \frac{\df
    Q^{\frac{1}{2}}}{\df s} \bigg) N_{00} \Big\|^2 \;,
\end{align*}
where $\tau' = \frac{\df \tau}{\df s}$.  Now define
\begin{align*}
  T_0 & = \mathrm{E} \bigg\{ \frac{q (\tau')^2}{\tau^3} \Big\|
  Q^{\frac{1}{2}} N_{00} \Big\|^2 \bigg\} \;, \\ T_1 & = \mathrm{E}
  \bigg\{ \frac{q}{\tau} \Big\| \bigg( \frac{\df Q^{\frac{1}{2}}}{\df
    s} \bigg) N_{00} \Big\|^2 \bigg\} \;, \\ T_2 & = \mathrm{E}
  \bigg\{ q \Big\| \bigg( \frac{\df Q}{\df s} \bigg) X^\intercal (I -
  M R) Y \Big\|^2 \bigg\} \;, \\ T_3 & = \mathrm{E} \bigg\{ q \Big\| Q
  X^\intercal \bigg( \frac{\df M}{\df s} \bigg) R Y \Big\|^2 \bigg\}
  \;.
\end{align*}
So we have
\begin{equation}
  \label{eq:eta_00}
  \mathrm{E} \Big\| \frac{\df \tilde{\eta}_{00}}{\df s} \Big\|^2 \le
  T_0 + 4T_1 + 4T_2 + 4T_3 \;.
\end{equation}
Using \eqref{eq:ls&ts} and \eqref{eq:Qbnd}, we have
\begin{align}
  \label{eq:T_0_bnd}
  T_0 = \mathrm{E} \bigg\{ \frac{q (\tau')^2}{\tau^3} \Big\|
  Q^{\frac{1}{2}} N_{00} \Big\|^2 \bigg\} & \le \ell_1 \rb q
  \mathrm{E} \Big( \frac{1}{J_2} \Big) \mathrm{E} \big(
  N_{00}^\intercal Q N_{00} \big) \norm{\alpha}^2 \notag \\ & \le
  \frac{\ell_1}{k_1} \mathrm{E} \Big( \frac{1}{J_2} \Big) \mathrm{E}
  \big( N_{00}^\intercal N_{00} \big) \norm{\alpha}^2 \notag \\ & =
  \frac{2 \ell_1 p }{k_1(N + 2a_2 -2)} \norm{\alpha}^2 \notag \\ & =
  \mathcal{O} \Big( \frac{p}{\rb q} \Big) \norm{\alpha}^2 \;.
\end{align}
We now develop some bounds that will allow us to handle $T_1$, $T_2$
\& $T_3$.  Recall that $e_i = c_i^{-1} = t_i/(r_i \tau)$, and that
$e_{\min} = t_{\min}/(r_{\min} \tau)$ and $e_{\max} =
t_{\max}/(r_{\max} \tau)$.  So $e_{\min}$ and $e_{\max}$ are actually
the largest and smallest $e_i$, respectively.  Now,
\begin{equation}
  \label{eq:dD}
  \frac{\df D_c}{\df s} = \frac{\df}{\df s} \Big( \bigoplus_{i=1}^q
  \frac{1}{e_i} I_{r_i} \Big) = - \bigoplus_{i=1}^q \frac{1}{e_i^2}
  \Big(\frac{\df e_i}{\df s}\Big) I_{r_i} \;.
\end{equation}
Recall that
\[
\Phi = \Big( \frac{\lambda^3}{\rb^2 \tau^2 J_1} + \frac{\lambda^2}{\rb
  \tau J_2} \Big) \norm{\alpha}^2 \;.
\]
Then, for each $i=1,2,\dots,q$, we have
\begin{align}
  \label{eq:eip_bnd}
  \Big(\frac{\df e_i}{\df s}\Big)^2 = \Big(\frac{\df}{\df s}
  \frac{\lambda}{r_i \tau} \Big)^2 & = \Big(\frac{\lambda'}{r_i \tau}
  - \frac{\lambda \tau'}{r_i \tau^2} \Big)^2 \notag \\ & \le
  \frac{2(\lambda')^2}{(r_i \tau)^2} + \frac{2\lambda^2
    (\tau')^2}{r_i^2 \tau^4} \notag \\ & \le \frac{2\ell_1
    \lambda^3}{r_{\min}^2 \tau^2 J_1} \norm{\alpha}^2 + \frac{2 \ell_1
    \rb \lambda^2}{r_{\min}^2 \tau J_2} \norm{\alpha}^2 \notag \\
    &
  \le 2 m^2 \ell_1 \Big( \frac{\lambda^3}{\rb^2 \tau^2 J_1}
   + \frac{\lambda^2}{\rb \tau J_2}
  \Big) \norm{\alpha}^2 \notag \\
  & = \ell_2 \Phi \;,
\end{align}
where the second inequality follows from \eqref{eq:ls&ts}, and $\ell_2
= 2 m^2 \ell_1$.  So, using \eqref{eq:dD}, we have
\begin{equation*}
  \Big( \frac{\df D_c}{\df s} \Big)^2 = \bigoplus_{i=1}^q
  \frac{1}{e_i^4} \Big(\frac{\df e_i}{\df s}\Big)^2 I_{r_i}
  \preccurlyeq \bigoplus_{i=1}^q \frac{\ell_2 \Phi}{e_{\max}^4}
  I_{r_i} = \Big( \frac{\ell_2 \Phi}{e_{\max}^4} \Big) I_N \;.
\end{equation*}
Now recall that
$M = D_c + \frac{(I-D_c)1 1^\intercal
  (I-D_c)}{1^\intercal (I-D_c) 1}$.
Define
\[
w_1 = (I-D_c) 1 \;\;\;\; \mbox{and} \;\;\;\; w_2 = \Big( \frac{\df
  D_c}{\df s} \Big) 1 \;.
\]
Then we have
\begin{align*}
  \frac{\df}{\df s} \bigg \{ \frac{(I-D_c)1 1^\intercal
    (I-D_c)}{1^\intercal (I-D_c) 1} \bigg \}
    = & \bigg \{ \frac{(I-D_c)1
    1^\intercal (I-D_c)}{\big( 1^\intercal (I-D_c) 1 \big)^2} \bigg \}
  1^\intercal \Big( \frac{\df D_c}{\df s} \Big) 1 \\
  &  -
  \bigg \{ \frac{1}{1^\intercal (I-D_c) 1} \bigg \} \bigg \{ \Big(
  \frac{\df D_c}{\df s} \Big) 1 1^\intercal (I-D_c) + (I-D_c) 1
  1^\intercal \Big( \frac{\df D_c}{\df s} \Big) \bigg \} \;,
\end{align*}
which is equal to
\[
\bigg \{ \frac{w_1 w_1^\intercal}{(1^\intercal w_1)^2} \bigg\} \big(
1^\intercal w_2 \big) - \bigg\{ \frac{1}{1^\intercal w_1} \bigg\}
\big( w_2 w_1^\intercal + w_1 w_2^\intercal \big) \;.
\]
Therefore,
\begin{equation}
  \label{eq:n1}
  \frac{\df M}{\df s} = \frac{\df D_c}{\df s} + \bigg \{ \frac{w_1
    w_1^\intercal}{(1^\intercal w_1)^2} \bigg\} \big( 1^\intercal w_2
  \big) - \bigg\{ \frac{1}{1^\intercal w_1} \bigg\} \big( w_2
  w_1^\intercal + w_1 w_2^\intercal \big) \;.
\end{equation}
Note that each of the three terms on the left-hand side of
\eqref{eq:n1} is a symmetric matrix.  Several applications of
Lemma~\ref{lem:simple_matrix} leads to
\begin{align}
  \label{eq:dMds_bound1}
  \Big( \frac{\df M}{\df s} \Big)^2 & = \bigg( \frac{\df D_c}{\df s} +
  \bigg \{ \frac{w_1 w_1^\intercal}{(1^\intercal w_1)^2} \bigg\} \big(
  1^\intercal w_2 \big) - \bigg\{ \frac{1}{1^\intercal w_1} \bigg\}
  \big( w_2 w_1^\intercal + w_1 w_2^\intercal \big) \bigg)^2 \notag
  \\ & \preccurlyeq 2 \bigg( \frac{\df D_c}{\df s} + \bigg \{
  \frac{w_1 w_1^\intercal}{(1^\intercal w_1)^2} \bigg\} \big(
  1^\intercal w_2 \big) \bigg)^2 + 2 \bigg( \bigg\{
  \frac{1}{1^\intercal w_1} \bigg\} \big( w_2 w_1^\intercal + w_1
  w_2^\intercal \big) \bigg)^2 \notag \\ & \preccurlyeq 4 \Big(
  \frac{\df D_c}{\df s} \Big)^2 + 4 \bigg( \frac{w_1
    w_1^\intercal}{(1^\intercal w_1)^2} \big( 1^\intercal w_2 \big)
  \bigg)^2
   + \frac{4}{(1^\intercal w_1)^2} \Big[ (w_2
    w_1^\intercal)^\intercal (w_2 w_1^\intercal) + (w_1
    w_2^\intercal)^\intercal (w_1 w_2^\intercal) \Big] \notag \\ &
  \preccurlyeq 4 \Big( \frac{\df D_c}{\df s} \Big)^2 + \frac{4
    (w_1^\intercal w_1)^2}{(1^\intercal w_1)^4} \big( 1^\intercal w_2
  \big)^2 I + \frac{8}{(1^\intercal w_1)^2} (w_1^\intercal w_1)
  (w_2^\intercal w_2) I \;.
\end{align}
Now
\begin{align*}
  w_1^\intercal w_1 & = 1^\intercal (I - D_c)^2 1 = \sum_{i=1}^q
  \sum_{j=1}^{r_i} \Big(\frac{\lambda}{t_i} \Big)^2 \leq \bar{r} q
  \Big(\frac{\lambda}{t_{\min}} \Big)^2 \;, \\ w_2^\intercal w_2 & =
  1^\intercal \Big(\frac{\df D_c}{\df s}\Big)^2 1 \leq 1^\intercal
  \Big( \frac{\ell_2 \Phi}{e_{\max}^4} I_{\bar{r}q} \Big) 1 = \ell_2
  \bar{r} q \Big( \frac{\Phi }{e_{\max}^4} \Big) \;, \\ (1^\intercal
  w_1)^2 & = [1^\intercal (I - D_c) 1]^2 = \bigg( \sum_{i=1}^q
  \sum_{j=1}^{r_i} \frac{\lambda}{t_i} \bigg)^2 \geq \Big(\bar{r} q
  \frac{\lambda}{t_{\max}} \Big)^2 \;, \\ (1^\intercal w_2)^2 & \leq
  (w_2^\intercal w_2) (1^\intercal 1) \leq \ell_2 (\bar{r} q)^2 \Big(
  \frac{\Phi}{e_{\max}^4} \Big) \;.
\end{align*}
So, in conjunction with \eqref{eq:dMds_bound1}, we have
\begin{align}
  \label{eq:dMds_bound2}
  \Big( \frac{\df M}{\df s} \Big)^2
  & \preccurlyeq 4 \Big( \frac{\df
    D_c}{\df s} \Big)^2 + \frac{4 (w_1^\intercal w_1)^2}{(1^\intercal
    w_1)^4} \big( 1^\intercal w_2 \big)^2 I + \frac{8}{(1^\intercal
    w_1)^2} (w_1^\intercal w_1) (w_2^\intercal w_2) I \notag \\
  &
  \preccurlyeq \Big( \frac{4 \ell_2 \Phi}{e_{\max}^4} \Big) I + \Big(
  \frac{4 \ell_2 m^4 \Phi}{e_{\max}^4} \Big) I + \Big( \frac{8 \ell_2
    m^2 \Phi}{e_{\max}^4} \Big) I \notag \\
  & \preccurlyeq 16 \ell_2
  m^4 \Big( \frac{\Phi}{e_{\max}^4} \Big) I \;.
\end{align}
Recall that $Q^{-1} = (X^\intercal X - \bar{X}^\intercal M \bar{X})$.
We have
\begin{align*}
  \Big(\frac{\df Q^{-1}}{\df s}\Big)^2 & = \Big(- \bar{X}^\intercal
  \Big(\frac{\df M}{\df s} \Big) \bar{X} \Big)^2 \\ & =
  \bar{X}^\intercal \Big(\frac{\df M}{\df s} \Big) \bar{X}
  \bar{X}^\intercal \Big(\frac{\df M}{\df s} \Big) \bar{X} \\ &
  \preccurlyeq \bar{r} q k_2 \bar{X}^\intercal \Big(\frac{\df M}{\df
    s}\Big)^2 \bar{X} \\ & \preccurlyeq 16 \ell_2 m^4 \bar{r} q k_2
  \bar{X}^\intercal \Big( \frac{\Phi}{e_{\max}^4} I \Big) \bar{X} \\ &
  \preccurlyeq 16 \ell_2 m^4 (\bar{r} q k_2)^2 \Big(
  \frac{\Phi}{e_{\max}^4} \Big) I_p \\ & = \ell_3 (\bar{r} q)^2 \Big(
  \frac{\Phi}{e_{\max}^4} \Big) I_p \;,
\end{align*}
where the first and third inequalities follow from (B2) (and
Lemma~\ref{lem:conseq}), the second inequality follows from
\eqref{eq:dMds_bound2}, and $\ell_3 = 16 \ell_2 m^4 k_2^2$.
Recall from \eqref{eq:Qbnd}
that $Q^2 \preccurlyeq (q \rb k_1)^{-2} I$.  It follows that
\begin{align}
  \label{eq:DQs_bnd}
  \Big(\frac{\df Q}{\df s}\Big)^2 & = \Big(- Q \Big( \frac{\df
    Q^{-1}}{\df s} \Big) Q \Big)^2 \notag \\ & = Q \Big( \frac{\df
    Q^{-1}}{\df s} \Big) Q^2 \Big( \frac{\df Q^{-1}}{\df s} \Big) Q
  \notag \\ & \preccurlyeq \frac{1}{(q \rb k_1)^2} Q \Big( \frac{\df
    Q^{-1}}{\df s} \Big)^2 Q \notag \\ & \preccurlyeq
  \frac{\ell_3}{k_1^2} \Big( \frac{\Phi}{e_{\max}^4} \Big) Q^2 \notag
  \\ & \preccurlyeq \frac{\ell_3}{\rb^2 q^2 k_1^4} \Big(
  \frac{\Phi}{e_{\max}^4} \Big) I_p \;,
\end{align}
where the formula for $\df Q / \df s$ in the first line can be derived by
applying the matrix version of the product rule to the equation $Q Q^{-1}
= I$.
It's clear that $X^\intercal X - \bar{X}^\intercal M \bar{X}
\preccurlyeq X^\intercal X$, and it follows that $\lambda_{\max}
(X^\intercal X - \bar{X}^\intercal M \bar{X}) \leq \lambda_{\max}
(X^\intercal X)$.  Furthermore, we have $\lambda_{\min} \big[
  (X^\intercal X - \bar{X}^\intercal M \bar{X})^{-1} \big] = \big[
  \lambda_{\max} (X^\intercal X - \bar{X}^\intercal M \bar{X})
  \big]^{-1}$ and $\lambda_{\min} \big[ (X^\intercal X)^{-1} \big] =
\big[ \lambda_{\max} (X^\intercal X) \big]^{-1}$.  It follows that
\begin{equation}
  \label{eq:co2}
  \lambda_{\min}(Q) = \lambda_{\min} \big[ (X^\intercal X -
    \bar{X}^\intercal M \bar{X})^{-1} \big] \ge \lambda_{\min} \big[
    (X^\intercal X)^{-1} \big] \ge \frac{1}{\rb q k_2} \;.
\end{equation}
Using Lemma~\ref{lem:sr}, \eqref{eq:DQs_bnd}, and \eqref{eq:co2}, we have
\begin{equation*}
  \lambda_{\max} \bigg\{ \Big( \frac{\df Q^{\frac{1}{2}}}{\df s}
  \Big)^2 \bigg\} \le \frac{\lambda_{\max} \Big\{ \Big( \frac{\df
      Q}{\df s} \Big)^2 \Big\} }{4 \lambda_{\min}(Q)} \le \frac{\rb q
    k_2}{4} \frac{\ell_3}{\rb^2 q^2 k_1^4} \Big(
  \frac{\Phi}{e_{\max}^4} \Big) \le \Big( \frac{\ell_3 k_2}{4 \rb q
    k_1^4} \Big) \Big( \frac{\Phi}{e_{\max}^4} \Big) \;.
\end{equation*}
We are now ready to attack $T_1$, $T_2$, and $T_3$.  We have
\begin{align*}
  T_1 & = \mathrm{E} \bigg\{ \frac{q}{\tau} \Big\| \bigg( \frac{\df
    Q^{\frac{1}{2}}}{\df s} \bigg) N_{00} \Big\|^2 \bigg\}  \\ &
  = q \mathrm{E} \bigg\{ \frac{1}{\tau} N_{00}^\intercal \Big(
  \frac{\df Q^{\frac{1}{2}}}{\df s} \Big)^2 N_{00} \bigg\}  \\ &
  \le q \mathrm{E} \bigg\{ \frac{1}{\tau} \Big( \frac{\ell_3 k_2}{4
    \rb q k_1^4} \Big) \Big( \frac{\Phi}{e_{\max}^4} \Big)
  N_{00}^\intercal N_{00} \bigg\}  \\ & = \mathcal{O} \Big(
  \frac{p}{\rb} \Big) \mathrm{E} \Big( \frac{\Phi}{\tau e_{\max}^4}
  \Big) \;.
\end{align*}
Recall that $T_2 = \mathrm{E} \big \{ q \big \| \big ( \frac{\df
  Q}{\df s} \big ) X^\intercal (I - M R) Y \big \|^2 \big \}$.  So
\begin{align*}
  \Big\| \Big( \frac{\df Q}{\df s} \Big) & X^\intercal (I - M R) Y
  \Big\|^2 & \\
  & = Y^\intercal (I - M R) X \Big( \frac{\df Q}{\df s}
  \Big)^2 X^\intercal (I - M R) Y \\ & \le \Big( \frac{\ell_3}{\rb^2
    q^2 k_1^4} \Big) \Big( \frac{\Phi}{e_{\max}^4} \Big) Y^\intercal
  (I - M R) X X^\intercal (I - M R) Y \\ & \le \Big(
  \frac{\ell_3}{\rb^2 q^2 k_1^4} \Big) \Big( \frac{\Phi}{e_{\max}^4}
  \Big) Y^\intercal Y \lambda_{\max} \big[ (I - M R) X X^\intercal (I
    - M R) \big] \\ & \le \Big( \frac{\ell_3}{\rb^2 q^2 k_1^4} \Big)
  \Big( \frac{\Phi}{e_{\max}^4} \Big) (\rb q \ell) \lambda_{\max}
  \big[ X^\intercal (I - M R)^2 X \big] \\
  & \le \Big( \frac{\ell_3
    \ell}{\rb q k_1^4} \Big) \Big( \frac{\Phi}{e_{\max}^4} \Big)
  \lambda_{\max} \big[2( X^\intercal X + \bar{X}^\intercal M^2
    \bar{X}) \big] \\ & \le \Big( \frac{\ell_3 \ell}{\rb q
    k_1^4} \Big) \Big( \frac{\Phi}{e_{\max}^4} \Big) (4 \rb q k_2)
  \\ & = \mathcal{O}(1)\Big( \frac{\Phi}{e_{\max}^4} \Big) \;,
\end{align*}
% where the first inequality is from \eqref{eq:DQs_bnd}, the third uses
% $(B_3)$, the fourth follows from Lemma~\ref{lem:simple_matrix}, and
% the last is from $(B_2)$ and the fact that
where the first inequality is from \eqref{eq:DQs_bnd},
the third uses $(B_3)$ and the fact that for any matrix $M$, $M M^\intercal$ has the same non-zero eigenvalues as $M^\intercal M$ \citep[see, e.g.,][Theorem 1.3.22]{horn:john:1985},
the fourth follows from Lemma 11,
and the last is from $(B_2)$ and the fact that
$\bar{X}^\intercal M^2
\bar{X} \preccurlyeq \bar{X}^\intercal \bar{X}
\preccurlyeq X^\intercal X$ (recall from \eqref{eq:Mbound} that $M \preccurlyeq I$).

Now recall that $T_3 = \mathrm{E} \big \{ q \big\| Q X^\intercal \big(
\frac{\df M}{\df s} \big) R Y \big\|^2 \big\}$.  We have
\begin{align*}
   \Big\| Q X^\intercal \Big( \frac{\df M}{\df s} \Big) R Y \Big\|^2 =
   \bar{Y}^\intercal \Big( \frac{\df M}{\df s} \Big) X Q^2 X^\intercal
   \Big( \frac{\df M}{\df s} \Big) \bar{Y}
    \le \frac{16 \ell \ell_2
     k_2 m^4}{k_1^2} \Big( \frac{\Phi}{e_{\max}^4} \Big)
    =
   \mathcal{O}(1) \Big( \frac{\Phi}{e_{\max}^4} \Big) \;,
\end{align*}
where we have used $Q^2 \preccurlyeq (q \rb k_1)^{-2} I$, $(B_2)$,
\eqref{eq:dMds_bound2}, and $(B_3)$.  Recall that
\[
U_1 = \mathrm{E} \Big( \frac{\Phi}{e_{\max}^2} \Big) \;\;\;\;
\mbox{and} \;\;\;\; U_2 = \mathrm{E} \Big( \frac{\Phi}{\tau
  e_{\max}^4} \Big) \;.
\]
Since $e_{\max} \ge 1$, $\Phi/e_{\max}^4 \le \Phi/e_{\max}^2$, so we
have $T_1 = \mathcal{O} ( \frac{p}{\rb} ) U_2$, $T_2 = \mathcal{O}(q)
U_1$, and $T_3 = \mathcal{O}(q) U_1$.  It now follows from
\eqref{eq:eta_00} and \eqref{eq:T_0_bnd} that
\begin{align}
  \label{eq:t_eta_00_bnd}
  \mathrm{E} \Big\| \frac{\df \tilde{\eta}_{00}}{\df s} \Big\|^2
  & \le
  T_0 + 4T_1 + 4T_2 + 4T_3 \notag \\
  & = \mathcal{O} \Big( \frac{p}{\rb q} \Big)
  \norm{\alpha}^2 + \mathcal{O}(q) U_1 + \mathcal{O} \Big(
  \frac{p}{\rb} \Big) U_2 \;.
\end{align}

We now develop an upper bound for $\mathrm{E} \big( \frac{\df
  \tilde{\eta}_0}{\df s} \big)^2$, which is the second term on the
right-hand side of \eqref{eq:outline}.  Recall that $\tilde{\eta}_{00}
= v + \sqrt{ \frac{q}{\tau}} Q^{1/2} N_{00}$.  Hence,
\begin{align*}
\tilde{\eta}_0 & = \sqrt{q} \frac{\sum_{i=1}^q (\bar{y}_i -
  \bar{x}_i^\intercal \tilde{\eta}_{00} / \sqrt{q}) /
  z_i}{\sum_{i=1}^q 1/z_i} + \sqrt{\frac{q}{\sum_{i=1}^q 1/z_i}} N_0
\\ & = \sqrt{q} \frac{\sum_{i=1}^q ( \bar{y}_i - \bar{x}_i^\intercal
  v/\sqrt{q} )/z_i}{\sum_{i=1}^q 1/z_i} - \frac{\sqrt{\frac{q}{\tau}}
  \sum_{i=1}^q \bar{x}_i^\intercal Q^\frac{1}{2}
  N_{00}/z_i}{\sum_{i=1}^q 1/z_i} + \sqrt{\frac{q}{\sum_{i=1}^q
    1/z_i}} N_0 \;.
\end{align*}
Now define
\begin{align*}
  T_4 & = q \mathrm{E} \bigg[ \frac{\df}{\df s} \bigg(
    \frac{\sum_{i=1}^q (\bar{y}_i - \bar{x}_i^\intercal
      v/\sqrt{q})/z_i}{\sum_{i=1}^q 1/z_i} \bigg) \bigg]^2 \;, \\ T_5
  & = \mathrm{E} \Bigg[ \frac{\df}{\df s} \Bigg(
    \frac{\sqrt{\frac{q}{\tau}} \sum_{i=1}^q \bar{x}_i^\intercal
      Q^\frac{1}{2} N_{00}/z_i}{\sum_{i=1}^q 1/z_i} \Bigg) \Bigg]^2
  \;, \\ T_6 & = q \mathrm{E} \bigg[ \frac{\df}{\df s} \bigg(
    \sqrt{\frac{1}{\sum_{i=1}^q 1/z_i}} N_0 \bigg) \bigg]^2 \;.
\end{align*}
So, we have
\begin{equation}
  \label{eq:t_eta_0_bnd}
  \mathrm{E} \Big( \frac{\df \tilde{\eta}_0}{\df s} \Big)^2 \le 3T_4 +
  3T_5 + 3T_6 \;.
\end{equation}
We have
\begin{align*}
  T_4 & = q \mathrm{E} \bigg[ \frac{\df}{\df s} \bigg(
    \frac{\sum_{i=1}^q (\bar{y}_i - \bar{x}_i^\intercal
      v/\sqrt{q})/z_i}{\sum_{i=1}^q 1/z_i} \bigg) \bigg]^2  \\
& = q
  \mathrm{E} \bigg[ \frac{\df}{\df s} \bigg( \frac{\sum_{i=1}^q
      (\bar{y}_i - \bar{x}_i^\intercal v/\sqrt{q})/e_i}{\sum_{i=1}^q
      1/e_i} \bigg) \bigg]^2 \\
& = q \mathrm{E} \bigg[
    \frac{\sum_{i=1}^q \big[ e'_i/e^2_i \big] \sum_{i=1}^q (\bar{y}_i
      - \bar{x}_i^\intercal v/\sqrt{q})/e_i}{\big[ \sum_{i=1}^q 1/e_i
        \big]^2} - \frac{\sum_{i=1}^q \big[ e'_i/e^2_i \big]
      (\bar{y}_i - \bar{x}_i^\intercal v/\sqrt{q})}{\sum_{i=1}^q
      1/e_i}
       - \frac{\sum_{i=1}^q \bar{x}_i^\intercal \big( \frac{\df
        v}{\df s} \big)/e_i}{\sqrt{q}\sum_{i=1}^q 1/e_i} \bigg]^2 \\
& \le 3T_{41} + 3T_{42} + 3T_{43} \;,
\end{align*}
where
\begin{align*}
  T_{41} & = q \mathrm{E} \bigg[ \frac{\sum_{i=1}^q \big[ e'_i/e^2_i
        \big] \sum_{i=1}^q (\bar{y}_i - \bar{x}_i^\intercal
      v/\sqrt{q})/e_i}{\big[ \sum_{i=1}^q 1/e_i \big]^2} \bigg]^2 \;,
  \\ T_{42} & = q \mathrm{E} \Bigg[ \frac{\sum_{i=1}^q \big[
        e'_i/e^2_i \big] (\bar{y}_i - \bar{x}_i^\intercal
      v/\sqrt{q})}{\sum_{i=1}^q 1/e_i} \bigg]^2 \;, \\ T_{43} & =
  \mathrm{E} \bigg[ \frac{\sum_{i=1}^q \bar{x}_i^\intercal \big(
      \frac{\df v}{\df s} \big)/e_i}{\sum_{i=1}^q 1/e_i} \bigg]^2 \;.
\end{align*}
Using \eqref{eq:eip_bnd} and the fact that $e_{\min}/e_{\max} \le m$
yields
\begin{align*}
  T_{41}
  & = q \mathrm{E} \bigg[ \frac{\sum_{i=1}^q \big[ e'_i/e^2_i
        \big] \sum_{i=1}^q (\bar{y}_i - \bar{x}_i^\intercal
      v/\sqrt{q})/e_i}{\big[ \sum_{i=1}^q 1/e_i \big]^2} \bigg]^2
  \\
  & \le q \mathrm{E} \bigg\{ \frac{e_{\min}^4}{q^4}
  \Big(\sum_{i=1}^q \frac{e'_i}{e_i^2} \Big)^2
  \Big(\sum_{i=1}^q \frac{1}{e_i^2} \Big) \Big( \sum_{i=1}^q
  (\bar{y}_i - \bar{x}_i^\intercal v/\sqrt{q})^2 \Big) \Big \} \\
  &
  \le q \mathrm{E} \bigg\{ \frac{e_{\min}^4}{q^4} \Big(
  \frac{q^2 \ell_2 \Phi }{e_{\max}^4} \Big)  \Big(
  \frac{q}{e_{\max}^2} \Big) \bigg( \sum_{i=1}^q (\bar{y}_i -
  \bar{x}_i^\intercal v/\sqrt{q})^2 \bigg) \bigg \} \\ & =
  \mathcal{O}(1) \mathrm{E} \bigg\{ \Big( \frac{\Phi}{e_{\max}^2}
  \Big) \bigg( \sum_{i=1}^q (\bar{y}_i - \bar{x}_i^\intercal
  v/\sqrt{q})^2 \bigg) \bigg \} \;.
\end{align*}
But
\begin{align}\label{eq:sum_bnd}
  \sum_{i=1}^q (\bar{y}_i - \bar{x}_i^\intercal v/\sqrt{q})^2 \le 2
  \sum_{i=1}^q \bar{y}^2_i + \frac{2}{q} v^\intercal \Big(
  \sum_{i=1}^q \bar{x}_i \bar{x}_i^\intercal \Big) v
  \le 2
  \sum_{i=1}^q \bar{y}^2_i + \frac{2 m q k_2}{q} v^\intercal v
  = \mathcal{O}(q) \;,
\end{align}
where the second inequality follows from Remark~\ref{rem:1}, and the
final equality follows from \eqref{eq:drift_5} and Remark~\ref{rem:1}.
It follows that $T_{41} = \mathcal{O}(q) U_1$.  Similarly, we have
  \begin{align*}
  T_{42} & = q \mathrm{E} \bigg[ \frac{\sum_{i=1}^q \big[ e'_i/e^2_i
        \big] (\bar{y}_i - \bar{x}_i^\intercal
      v/\sqrt{q})}{\sum_{i=1}^q 1/e_i} \bigg]^2 \\ & \le q \mathrm{E}
  \bigg\{ \frac{e_{\min}^2}{q^2} \Big(\sum_{i=1}^q
  \frac{(e'_i)^2}{e_i^4} \Big) \Big( \sum_{i=1}^q (\bar{y}_i -
  \bar{x}_i^\intercal v/\sqrt{q})^2 \Big) \bigg \} \\ & \le q
  \mathrm{E} \bigg\{ \frac{e_{\min}^2}{q^2} \Big(\frac{q \ell_2
    \Phi}{e_{\max}^4} \Big) \Big( \sum_{i=1}^q (\bar{y}_i -
  \bar{x}_i^\intercal v/\sqrt{q})^2 \Big) \bigg \} \\ & =
  \mathcal{O}(q) U_1 \;.
\end{align*}
It follows from our work on $\big \| \frac{\df \tilde{\eta}_{00}}{\df
  s} \big \|$ that $\mathrm{E} \big \| \frac{\df v}{\df s} \big \|^2
\le 2(T_2+T_3)$.  Thus,
\begin{align*}
  T_{43} & = \mathrm{E} \bigg[ \frac{\sum_{i=1}^q \bar{x}_i^\intercal
      \big( \frac{\df v}{\df s} \big)/e_i}{\sum_{i=1}^q 1/e_i}
    \bigg]^2 \\ & \le \mathrm{E} \bigg\{ \frac{e_{\min}^2}{q^2} \Big(
  \sum_{i=1}^q \frac{1}{e_i^2} \Big) \Big(\frac{\df v}{\df
    s}\Big)^\intercal \Big( \sum_{i=1}^q \bar{x}_i \bar{x}_i^\intercal
  \Big) \Big(\frac{\df v}{\df s}\Big) \bigg\} \\
  & \le \mathrm{E}
  \bigg\{ \frac{e_{\min}^2}{q^2} \Big( \frac{q}{e_{\max}^2} \Big)
  \big(m q k_2) \Big \| \frac{\df v}{\df s} \Big \|^2 \bigg\} \\
  & \le
  m^3 k_2 \mathrm{E} \Big \| \frac{\df v}{\df s} \Big \|^2 \\ & \le 2
  m^3 k_2 (T_2+T_3) \\ & = \mathcal{O}(q) U_1 \;.
\end{align*}
We conclude that
\begin{equation}
  \label{eq:T_4_bnd}
  T_4 = \mathcal{O}(q) U_1 \;.
\end{equation}
We now move on to $T_5$.  We have
\begin{align*}
  T_5 & = \mathrm{E} \Bigg[ \frac{\df}{\df s} \Bigg(
    \frac{\sqrt{\frac{q}{\tau}} \sum_{i=1}^q \bar{x}_i^\intercal
      Q^\frac{1}{2} N_{00}/z_i}{\sum_{i=1}^q 1/z_i} \Bigg) \Bigg]^2
  \\ & = \mathrm{E} \Bigg[ \frac{\sum_{i=1}^q \big[ z'_i/z^2_i \big]
      \sqrt{\frac{q}{\tau}} \sum_{i=1}^q \bar{x}_i^\intercal
      Q^\frac{1}{2} N_{00}/z_i}{\big[ \sum_{i=1}^q 1/z_i \big]^2} -
    \frac{\sqrt{\frac{q}{\tau}} \sum_{i=1}^q \big[ z'_i/z^2_i \big]
      \bar{x}_i^\intercal Q^\frac{1}{2} N_{00}}{\sum_{i=1}^q 1/z_i}
    \\ & \hspace*{65mm} + \frac{\sum_{i=1}^q \bar{x}_i^\intercal \big(
      \frac{\df}{\df s} \sqrt{\frac{q}{\tau}} Q^\frac{1}{2} N_{00}
      \big)/z_i}{\sum_{i=1}^q 1/z_i} \Bigg]^2 \\ & \le 3T_{51} +
  3T_{52} + 3T_{53} \;,
\end{align*}
where
\begin{align*}
  T_{51} & = \mathrm{E} \Bigg[ \frac{\sum_{i=1}^q \big[ z'_i/z^2_i
        \big] \sqrt{\frac{q}{\tau}} \sum_{i=1}^q \bar{x}_i^\intercal
      Q^\frac{1}{2} N_{00}/z_i}{\big[ \sum_{i=1}^q 1/z_i \big]^2}
    \Bigg]^2 \;, \\ T_{52} & = \mathrm{E} \Bigg[
    \frac{\sqrt{\frac{q}{\tau}} \sum_{i=1}^q \big[ z'_i/z^2_i \big]
      \bar{x}_i^\intercal Q^\frac{1}{2} N_{00}}{\sum_{i=1}^q 1/z_i}
    \Bigg]^2 \;, \\ T_{53} & = \mathrm{E} \Bigg[ \frac{\sum_{i=1}^q
      \bar{x}_i^\intercal \big( \frac{\df}{\df s}
      \sqrt{\frac{q}{\tau}} Q^\frac{1}{2} N_{00}
      \big)/z_i}{\sum_{i=1}^q 1/z_i} \Bigg]^2 \;.
\end{align*}
Now,
\begin{align*}
  T_{51} & = \mathrm{E} \Bigg[ \frac{\sum_{i=1}^q \big[ z'_i/z^2_i
        \big] \sqrt{\frac{q}{\tau}} \sum_{i=1}^q \bar{x}_i^\intercal
      Q^\frac{1}{2} N_{00}/z_i}{\big[ \sum_{i=1}^q 1/z_i \big]^2}
    \Bigg]^2 \\ & \le \mathrm{E} \Bigg[ \frac{z_{\min}^4}{q^4} \Big(
    \sum_{i=1}^q \frac{1}{z_i^4} \Big) \Big( \sum_{i=1}^q (z'_i)^2
    \Big) \Big( \sum_{i=1}^q \frac{1}{z_i^2} \Big) \frac{q}{\tau}
    \sum_{i=1}^q \Big( \bar{x}_i^\intercal Q^\frac{1}{2} N_{00}
    \Big)^2 \Bigg] \\ & \le \mathrm{E} \Bigg[ \frac{z_{\min}^4}{q^4}
    \Big( \frac{q^2}{z_{\max}^6} \Big) \Big( \sum_{i=1}^q (z'_i)^2
    \Big) \frac{q}{\tau} \sum_{i=1}^q \Big( \bar{x}_i^\intercal
    Q^\frac{1}{2} N_{00} \Big)^2 \Bigg] \\ & \le \frac{m^4}{q}
  \mathrm{E} \Bigg[ \Big( \sum_{i=1}^q (z'_i)^2 \Big) \Big(
    \frac{1}{z_{\max}^2} \Big) \frac{1}{\tau} \sum_{i=1}^q \Big(
    \bar{x}_i^\intercal Q^\frac{1}{2} N_{00} \Big)^2 \Bigg] \;.
\end{align*}
But we have
\begin{align}
  \label{eq:sum2_bnd}
  \sum_{i=1}^q \Big( \bar{x}_i^\intercal Q^\frac{1}{2} N_{00} \Big)^2
  = N_{00}^\intercal Q^\frac{1}{2} \Big( \sum_{i=1}^q \bar{x}_i
  \bar{x}_i^\intercal \Big) Q^\frac{1}{2} N_{00}
  \le (m q k_2)
  N_{00}^\intercal Q N_{00}
  \le \frac{m k_2}{\rb k_1} N_{00}^\intercal
  N_{00} \;,
\end{align}
so that
\begin{align}
  \label{eq:t51bnd}
  T_{51} &  \le \frac{m^4}{q} \mathrm{E} \Bigg[
    \Big( \sum_{i=1}^q (z'_i)^2 \Big) \Big( \frac{1}{z_{\max}^2} \Big)
    \frac{1}{\tau} \frac{m k_2}{\rb k_1} N_{00}^\intercal N_{00}
    \Bigg] \notag \\ & \le \frac{m^5 k_2}{q \rb k_1} \mathrm{E} \Bigg[
    \Big( \sum_{i=1}^q (z'_i)^2 \Big) \Big( \frac{1}{z_{\max}^2} \Big)
    \frac{1}{\tau} \Bigg] \mathrm{E} \big[ N_{00}^\intercal N_{00}
    \big] \notag \\ & = \frac{m^5 k_2 p}{q \rb k_1} \mathrm{E} \Bigg[
    \Big( \sum_{i=1}^q (z'_i)^2 \Big) \Big( \frac{1}{\tau z_{\max}^2}
    \Big) \Bigg] \;.
\end{align}
Recall that $z_i = \frac{t_i}{r_i \lambda \tau} =
\frac{1}{r_i \tau} + \frac{1}{\lambda}$.
Thus,
\begin{align}
  \label{eq:dz_i}
(z_i')^2 & = \Big[\frac{\df }{\df s} \Big(\frac{1}{r_i \tau} +
    \frac{1}{\lambda} \Big) \Big]^2 \notag \\ & = \Big[-
    \frac{\tau'}{r_i \tau^2} - \frac{\lambda'}{\lambda^2} \Big]^2
  \notag \\ & \leq 2 \frac{(\tau')^2}{r_i^2 \tau^4} + 2
  \frac{(\lambda')^2}{\lambda^4} \notag \\ & \leq \frac{2 m^2 \ell_1
  }{\bar{r} \tau} \frac{1}{J_2} \|\alpha\|^2 + \frac{2 m^2 \ell_1
  }{\lambda} \frac{1}{J_1} \|\alpha\|^2 \notag \\ & = 2 m^2 \ell_1
  \Big( \frac{1}{\bar{r} \tau} \frac{1}{J_2} + \frac{1}{\lambda}
  \frac{1}{J_1} \Big) \|\alpha\|^2 \;,
\end{align}
where the second inequality follows from \eqref{eq:ls&ts}.
Now
\[
\frac{1}{\tau z_{\max}} = r_{\max} \Big(\frac{\lambda}{\lambda +
  r_{\max} \tau} \Big) \le r_{\max} \Big( \frac{\lambda}{r_{\max}
  \tau} \wedge 1 \Big) \le r_{\max} \Big( \frac{\lambda}{\rb \tau}
\wedge 1 \Big) \le r_{\max} \big( \phi \wedge 1 \big) \;,
\]
where $\phi = \lambda/(\rb \tau)$.  Thus,
\begin{align*}
T_{51} & \le \frac{m^5 k_2 p}{q \rb k_1} \mathrm{E} \Bigg[ \Big(
  \sum_{i=1}^q (z'_i)^2 \Big) \Big( \frac{1}{\tau z_{\max}^2} \Big)
  \Bigg] \\ & \le \frac{2 m^7 \ell_1 k_2 p}{\rb k_1} \mathrm{E} \bigg[
  \Big( \frac{1}{\bar{r} \tau} \frac{1}{J_2} + \frac{1}{\lambda}
  \frac{1}{J_1} \Big) \Big( \frac{r_{\max}}{z_{\max}} \big( \phi
  \wedge 1 \big) \Big) \bigg] \|\alpha\|^2 \\ & \le \frac{2 m^8 \ell_1
  k_2 p}{k_1} \mathrm{E} \bigg[ \Big( \frac{1}{\bar{r} \tau}
  \frac{1}{J_2} + \frac{1}{\lambda} \frac{1}{J_1} \Big) \Big(
  \frac{1}{z_{\max}} \big( \phi \wedge 1 \big) \Big) \bigg]
\|\alpha\|^2 \\ & \le \frac{2 m^8 \ell_1 k_2 p}{k_1} \mathrm{E} \bigg[
  \Big( \frac{m \lambda}{t_{\max}} \frac{1}{J_2} + \frac{r_{\max}
    \tau}{t_{\max}} \frac{1}{J_1} \Big) \big( \phi \wedge 1 \big)
  \bigg] \|\alpha\|^2 \\ & \le \frac{2 m^9 \ell_1 k_2 p}{k_1}
\mathrm{E} \bigg[ \Big( \frac{1}{J_1} + \frac{1}{J_2} \Big) \big( \phi
  \wedge 1 \big) \bigg] \|\alpha\|^2 \\
  & = \mathcal{O}(q) \mathrm{E}
\bigg[ \big( \phi \wedge 1 \big) \frac{1}{J_1} + \frac{1}{J_2} \bigg]
\|\alpha\|^2 \;,
\end{align*}
where the last line follows from $(B_4)$.  Recall that
\[
U_3 = \mathrm{E} \bigg[ \big( \phi \wedge 1 \big) \frac{1}{J_1} +
  \frac{1}{J_2} \bigg] \|\alpha\|^2 \;,
\]
then $T_{51} = \mathcal{O}(q) U_3$.  Now
\begin{align*}
T_{52} & = \mathrm{E} \Bigg[ \frac{\sqrt{\frac{q}{\tau}} \sum_{i=1}^q
    \big[ z'_i/z^2_i \big] \bar{x}_i^\intercal Q^\frac{1}{2}
    N_{00}}{\sum_{i=1}^q 1/z_i} \Bigg]^2 \\ & \le \mathrm{E} \Bigg[
  \frac{z_{\min}^2}{\tau q} \Big( \sum_{i=1}^q \frac{1}{z_i^4} \Big)
  \Big( \sum_{i=1}^q (z'_i)^2 \Big) \sum_{i=1}^q \Big(
  \bar{x}_i^\intercal Q^\frac{1}{2} N_{00} \Big)^2 \Bigg] \\ & \le
\mathrm{E} \Bigg[ \frac{z_{\min}^2}{\tau q} \Big( \frac{q}{z_{\max}^4}
  \Big) \Big( \sum_{i=1}^q (z'_i)^2 \Big) \frac{m k_2}{\rb k_1} \Bigg]
\mathrm{E} \big[ N_{00}^\intercal N_{00} \big] \\ & \le \frac{m^3 k_2
  p}{\rb k_1} \mathrm{E} \bigg[ \Big( \frac{1}{\tau z_{\max}^2} \Big)
  \Big( \sum_{i=1}^q (z'_i)^2 \Big) \bigg]  \;,
\end{align*}
but this is $m^2 q$ times \eqref{eq:t51bnd}, so $T_{52} = \mathcal{O}(q^2)
U_3$.  Continuing, we have
\begin{align*}
  T_{53} & = \mathrm{E} \Bigg[ \frac{\sum_{i=1}^q \bar{x}_i^\intercal
      \big( \frac{\df}{\df s} \sqrt{\frac{q}{\tau}} Q^\frac{1}{2}
      N_{00} \big)/z_i}{\sum_{i=1}^q 1/z_i} \Bigg]^2 \\ & \le
  \mathrm{E} \Bigg[ \frac{z_{\min}^2}{q^2} \frac{q}{z_{\max}^2}
    \sum_{i=1}^q \Big[ \bar{x}_i^\intercal \Big( \frac{\df}{\df s}
      \sqrt{\frac{q}{\tau}} Q^\frac{1}{2} N_{00} \Big) \Big]^2 \Bigg]
  \\ & \le \frac{m^2}{q} \mathrm{E} \Bigg[ \Big( \frac{\df}{\df s}
    \sqrt{\frac{q}{\tau}} Q^\frac{1}{2} N_{00} \Big)^\intercal \Big(
    \sum_{i=1}^q \bar{x}_i \bar{x}_i^\intercal \Big) \Big(
    \frac{\df}{\df s} \sqrt{\frac{q}{\tau}} Q^\frac{1}{2} N_{00} \Big)
    \Bigg] \\ & \le m^3 k_2 \mathrm{E} \bigg \| \frac{\df}{\df s}
  \sqrt{\frac{q}{\tau}} Q^\frac{1}{2} N_{00} \bigg \|^2 \\ & \le 2 m^3
  k_2 \bigg \{ \mathrm{E} \bigg[ \frac{q (\tau')^2}{\tau^3} \|
    Q^\frac{1}{2} N_{00} \|^2 \bigg] + \mathrm{E} \bigg[
    \frac{q}{\tau} \Big \| \Big( \frac{\df Q^\frac{1}{2}}{\df s} \Big)
    N_{00} \Big \|^2 \bigg] \bigg \} \\ & = 2 m^3 k_2 (T_0 + T_1) \\ &
  = \mathcal{O} \Big( \frac{p}{\rb q} \Big) \norm{\alpha}^2 +
  \mathcal{O} \Big( \frac{p}{\rb} \Big) U_2 \;.
\end{align*}
Combining our bounds on $T_{51}$, $T_{52}$, and $T_{53}$ yields
\begin{align}
  \label{eq:T_5_bnd}
  T_5 & \le 3T_{51} + 3T_{52} + 3T_{53} \notag \\
  & = \mathcal{O}(q^2) U_3 + \mathcal{O} \Big( \frac{p}{\rb q}
  \Big) \norm{\alpha}^2 + \mathcal{O} \Big( \frac{p}{\rb} \Big) U_2
  \;.
\end{align}
We now move on to $T_6$.  We have
\begin{align}
  \label{eq:T_6_bnd}
  T_6 & = q \mathrm{E} \bigg[ \frac{\df}{\df s} \bigg(
    \sqrt{\frac{1}{\sum_{i=1}^q 1/z_i}} N_0 \bigg) \bigg]^2 \notag
  \\ & = \frac{q}{4} \mathrm{E} \Bigg[ \Big( \sum_{i=1}^q
    \frac{1}{z_i} \Big)^{-3/2} \Big( \sum_{i=1}^q \frac{z_i'}{z_i^2}
    \Big) N_0 \Bigg]^2 \notag \\ & \le \frac{q}{4} \mathrm{E} \Bigg[
    \Big( \sum_{i=1}^q \frac{1}{z_i} \Big)^{-3} \Big( \sum_{i=1}^q
    (z_i')^2 \Big) \Big( \sum_{i=1}^q \frac{1}{z_i^4} \Big) \Bigg]
  \mathrm{E} \big[ N_0^2 \big] \notag \\ & \le \frac{q}{4} \mathrm{E}
  \bigg[ \Big( \frac{z_{\min}^3}{q^3} \Big) \Big( \frac{q}{z_{\max}^4}
    \Big) \big(2 q m^2 \ell_1 \big) \Big( \frac{1}{\bar{r} \tau}
    \frac{1}{J_2} + \frac{1}{\lambda} \frac{1}{J_1} \Big) \bigg]
  \|\alpha\|^2 \notag \\ & \le \frac{m^5 \ell_1}{2} \mathrm{E} \bigg[
    \Big( \frac{1}{z_{\max}} \Big) \Big( \frac{1}{\bar{r} \tau}
    \frac{1}{J_2} + \frac{1}{\lambda} \frac{1}{J_1} \Big) \bigg]
  \|\alpha\|^2 \notag \\ & \le \frac{m^5 \ell_1}{2} \mathrm{E} \bigg[
    \Big( \frac{m \lambda}{t_{\max}} \frac{1}{J_2} + \frac{r_{\max}
      \tau}{t_{\max}} \frac{1}{J_1} \Big) \bigg] \|\alpha\|^2 \notag
  \\ & \le \frac{m^5 \ell_1}{2} \mathrm{E} \Big( \frac{m}{J_2} +
  \frac{1}{J_1} \Big) \|\alpha\|^2 \notag \\ & = \mathcal{O}
  \Big( \frac{1}{q} \Big) \|\alpha\|^2 \;,
\end{align}
where the second inequality follows from \eqref{eq:dz_i}, and the last
line holds because $\mathrm{E}(J_1^{-1}) = (q/2+a_1-1)^{-1}$ and
$\mathrm{E}(J_2^{-1}) = (q\rb/2+a_2-1)^{-1}$.  Combining
\eqref{eq:t_eta_0_bnd}, \eqref{eq:T_4_bnd}, \eqref{eq:T_5_bnd}, and
\eqref{eq:T_6_bnd}, we have
\begin{align}
  \label{eq:t_eta_0_bnd2}
  \mathrm{E} \Big( \frac{\df \tilde{\eta}_0}{\df s} \Big)^2 & \le 3T_4
  + 3T_5 + 3T_6 \notag \\ & = \mathcal{O}(q) U_1 + \mathcal{O}(q^2)
  U_3 + \mathcal{O} \Big( \frac{p}{\rb q} \Big) \norm{\alpha}^2 +
  \mathcal{O} \Big( \frac{p}{\rb} \Big) U_2 + \mathcal{O} \Big(
  \frac{1}{q} \Big) \|\alpha\|^2
\end{align}

We now go to work on $\sum_{i=1}^q \mathrm{E} \big( \frac{\df
  \tilde{\eta}_i}{\df s} \big)^2$, which is the third term on the
right-hand side of \eqref{eq:outline}.  First,
\begin{align*}
  \tilde{\eta}_i & = \frac{\lambda}{t_i} \tilde{\eta}_0/\sqrt{q} +
  \frac{r_i \tau}{t_i} (\bar{y}_i - \bar{x}_i^\intercal
  \tilde{\eta}_{00}/\sqrt{q}) + \sqrt{\frac{1}{t_i}} N_i \\ & = \Big(
  1 - \frac{1}{e_i} \Big) \tilde{\eta}_0/\sqrt{q} + \frac{1}{e_i}
  (\bar{y}_i - \bar{x}_i^\intercal \tilde{\eta}_{00}/\sqrt{q}) +
  \sqrt{\frac{1}{t_i}} N_i \;.
\end{align*}
Now using \eqref{eq:eip_bnd},
\begin{align*}
  & \Big( \frac{\df \tilde{\eta}_i }{\df s} \Big)^2 \\
= & \bigg\{
  \frac{e_i'}{e_i^2} \tilde{\eta}_0/\sqrt{q} + \Big( 1 - \frac{1}{e_i}
  \Big) \frac{\df \tilde{\eta}_0 }{\df s}/\sqrt{q} -
  \frac{e_i'}{e_i^2} (\bar{y}_i - \bar{x}_i^\intercal
  \tilde{\eta}_{00}/\sqrt{q}) - \frac{1}{e_i} \bar{x}_i^\intercal
  \frac{\df \tilde{\eta}_{00}}{\df s}/\sqrt{q} - \frac{1}{2}
  t_i^{-\frac{3}{2}} t_i' N_i \bigg\}^2
\\ \le &
  \frac{5}{q} \frac{(e_i')^2 }{e_i^4} \tilde{\eta}_0^2 + \frac{5}{q}
  \Big( 1 - \frac{1}{e_i} \Big)^2 \Big( \frac{\df \tilde{\eta}_0 }{\df
    s} \Big)^2 + 5 \frac{(e_i')^2 }{e_i^4} (\bar{y}_i -
  \bar{x}_i^\intercal \tilde{\eta}_{00}/\sqrt{q})^2 + \frac{5}{q}
  \Big(\frac{1}{e_i} \Big)^2 \Big( \bar{x}_i^\intercal \frac{\df
    \tilde{\eta}_{00}}{\df s} \Big)^2
    + \frac{5}{4} t_i^{-3} (t_i')^2
  N_i^2
\\ \le & \frac{5}{q} \frac{\ell_2
    \Phi}{e_{\max}^4} \tilde{\eta}_0^2 + \frac{5}{q} \Big( \frac{\df
    \tilde{\eta}_0 }{\df s} \Big)^2 + 5 \frac{\ell_2 \Phi}{e_{\max}^4}
  (\bar{y}_i - \bar{x}_i^\intercal \tilde{\eta}_{00}/\sqrt{q})^2 +
  \frac{5}{q} \Big( \bar{x}_i^\intercal \frac{\df
    \tilde{\eta}_{00}}{\df s} \Big)^2 + \frac{5}{4} t_i^{-3} (t_i')^2
  N_i^2 \;.
\end{align*}
We now bound $\sum_{i=1}^q \mathrm{E} \big( \frac{\df
  \tilde{\eta}_i}{\df s} \big)^2$. First,
\begin{align*}
\frac{5 \ell_2 \Phi}{e_{\max}^4} \tilde{\eta}_0^2 & = \frac{5 \ell_2
  \Phi}{e_{\max}^4} \Big( \sqrt{q} \frac{\sum_{i=1}^q (\bar{y}_i -
  \bar{x}_i^\intercal \tilde{\eta}_{00} / \sqrt{q}) /
  z_i}{\sum_{i=1}^q 1/z_i} + \sqrt{\frac{q}{\sum_{i=1}^q 1/z_i}} N_0
\Big)^2 \\ & \le \frac{10 \ell_2 \Phi}{e_{\max}^4} \bigg( \frac{q
  \big[ \sum_{i=1}^q (\bar{y}_i - \bar{x}_i^\intercal
    \tilde{\eta}_{00} / \sqrt{q})/z_i \big]^2}{\big[ \sum_{i=1}^q
    1/z_i \big]^2} + \frac{q}{\sum_{i=1}^q 1/z_i} N_0^2 \bigg) \\ &
\le \frac{10 \ell_2 \Phi}{e_{\max}^4} \bigg( \frac{q \sum_{i=1}^q
  (\bar{y}_i - \bar{x}_i^\intercal \tilde{\eta}_{00}/\sqrt{q})^2
  \sum_{i=1}^q (1/z_i^2)}{\big[ q/z_{\min} \big]^2} + z_{\min} N_0^2
\bigg) \\ & \le \frac{10 \ell_2 m^2 \Phi}{e_{\max}^4} \sum_{i=1}^q
(\bar{y}_i - \bar{x}_i^\intercal \tilde{\eta}_{00}/\sqrt{q})^2 +
\frac{10 \ell_2 \Phi}{e_{\max}^4} z_{\min} N_0^2 \;,
\end{align*}
but
\begin{align*}
\sum_{i=1}^q (\bar{y}_i - \bar{x}_i^\intercal
\tilde{\eta}_{00}/\sqrt{q})^2 & = \sum_{i=1}^q \bigg\{ \bar{y}_i -
\bar{x}_i^\intercal \Big( v + \sqrt{\frac{q}{\tau}} Q^{\frac{1}{2}}
N_{00} \Big)/\sqrt{q} \bigg\}^2 \\ & \le 2 \sum_{i=1}^q \Big(
\bar{y}_i - \bar{x}_i^\intercal v/\sqrt{q} \Big)^2 + \frac{2}{q}
\sum_{i=1}^q \Big( \sqrt{\frac{q}{\tau}} \bar{x}_i^\intercal
Q^{\frac{1}{2}} N_{00} \Big)^2 \\ & = \mathcal{O}(q) + \mathcal{O}
\Big( \frac{1}{\rb} \Big) \frac{N_{00}^\intercal N_{00}}{\tau} \;,
\end{align*}
where the last line follows from \eqref{eq:sum_bnd} and
\eqref{eq:sum2_bnd}.  So finally,
\begin{align*}
  \frac{5 \ell_2 \Phi}{e_{\max}^4} \tilde{\eta}_0^2 & = \mathcal{O}(q)
  \frac{\Phi}{e_{\max}^4} + \mathcal{O} \Big( \frac{1}{\rb} \Big)
  \frac{\Phi}{\tau e_{\max}^4} N_{00}^\intercal N_{00} +
  \mathcal{O}(1) \frac{\Phi}{\lambda e_{\max}^3} N_0^2 \;,
\end{align*}
and we have used the fact that $z_{\min}/e_{\max} \le m/\lambda$.
Now,
\[
\frac{5 \ell_2 \Phi}{e_{\max}^4} \sum_{i=1}^q (\bar{y}_i -
\bar{x}_i^\intercal \tilde{\eta}_{00}/\sqrt{q})^2 = \mathcal{O}(q)
\frac{\Phi}{e_{\max}^4} + \mathcal{O} \Big( \frac{1}{\rb} \Big)
\frac{\Phi}{\tau e_{\max}^4} N_{00}^\intercal N_{00} \;,
\]
and
\begin{align*}
\frac{5}{q} \sum_{i=1}^q \Big( \bar{x}_i^\intercal \frac{\df
  \tilde{\eta}_{00}}{\df s} \Big)^2 = \frac{5}{q} \Big( \frac{\df
  \tilde{\eta}_{00}}{\df s} \Big)^\intercal \Big( \sum_{i=1}^q
\bar{x}_i \bar{x}_i^\intercal \Big) \Big( \frac{\df
  \tilde{\eta}_{00}}{\df s} \Big) = \mathcal{O}(1) \Big \| \frac{\df
  \tilde{\eta}_{00}}{\df s} \Big \|^2 \;.
\end{align*}
Finally, we have
\begin{align*}
\frac{5}{4} \sum_{i=1}^q t_i^{-3} (t_i')^2 N_i^2 & = \frac{5}{4}
\sum_{i=1}^q \Big(\frac{1}{r_i \tau + \lambda}\Big)^3 \big( \lambda' +
r_i \tau' \big)^2 N_i^2 \\ & \le \frac{5}{2} \Big(\frac{1}{r_{\min}
  \tau + \lambda}\Big)^3 \big( (\lambda')^2 + r_{\max}^2 (\tau')^2
\big) \sum_{i=1}^q N_i^2 \\ & \le \frac{5 m^3}{2(\rb \tau +
  \lambda)^3} \bigg( \frac{\ell_1 \lambda^3}{J_1} \norm{\alpha}^2 +
\frac{m^2l_1 \rb^3 \tau^3}{J_2} \norm{\alpha}^2 \bigg) \sum_{i=1}^q
N_i^2 \\ & \le \frac{5 m^5 \ell_1}{2} \bigg( \frac{\lambda^3}{(\rb
  \tau + \lambda)^3} \frac{1}{J_1} + \frac{(\rb \tau)^3}{(\rb \tau +
  \lambda)^3} \frac{1}{J_2} \bigg) \norm{\alpha}^2 \sum_{i=1}^q N_i^2
\\ & \le \frac{5 m^5 \ell_1}{2} \bigg( \big[ \phi \wedge 1 \big]
\frac{1}{J_1} + \frac{1}{J_2} \bigg) \norm{\alpha}^2 \sum_{i=1}^q
N_i^2 \;,
\end{align*}
where the second inequality follows from \eqref{eq:ls&ts} and the
fourth follows from the fact that $\phi = \lambda/(\rb \tau)$.
Combining all of these bounds yields
\begin{align*}
\sum_{i=1}^q \Big( \frac{\df \tilde{\eta}_i}{\df s} \Big)^2 =
\mathcal{O}(q) \frac{\Phi}{e_{\max}^4} + \mathcal{O} \Big(
\frac{1}{\rb} \Big) \frac{\Phi}{\tau e_{\max}^4} N_{00}^\intercal
N_{00} + \mathcal{O}(1) \frac{\Phi}{\lambda e_{\max}^3} N_0^2 +
\mathcal{O}(1) \Big( \frac{\df \tilde{\eta}_0 }{\df s} \Big)^2
\\  + \mathcal{O}(1) \Big \| \frac{\df
  \tilde{\eta}_{00}}{\df s} \Big \|^2 + \mathcal{O}(1) \bigg( \big[
  \phi \wedge 1 \big] \frac{1}{J_1} + \frac{1}{J_2} \bigg)
\norm{\alpha}^2 \sum_{i=1}^q N_i^2 \;.
\end{align*}
Recall that
\[
U_4 = \mathrm{E} \Big( \frac{\Phi}{\lambda e_{\max}^3} \Big) \;,
\]
and that
\[
U_1 = \mathrm{E} \Big( \frac{\Phi}{e_{\max}^2} \Big) \;, \;\;\; U_2 =
\mathrm{E} \Big( \frac{\Phi}{\tau e_{\max}^4} \Big) \;, \;\;
\mbox{and} \;\;\; U_3 = \mathrm{E} \bigg[ \big( \phi \wedge 1 \big)
  \frac{1}{J_1} + \frac{1}{J_2} \bigg] \|\alpha\|^2 \;.
\]
So, finally, we have
\begin{align*}
& \sum_{i=1}^q  \mathrm{E} \Big( \frac{\df \tilde{\eta}_i}{\df s}
  \Big)^2 \\
& = \mathcal{O}(q) U_1 + \mathcal{O} \Big( \frac{p}{\rb} \Big)
  U_2 + \mathcal{O}(q) U_3 + \mathcal{O}(1) U_4 + \mathcal{O}(1)
  \mathrm{E} \Big( \frac{\df \tilde{\eta}_0 }{\df s} \Big)^2 +
  \mathcal{O}(1) \mathrm{E} \Big \| \frac{\df \tilde{\eta}_{00}}{\df
    s} \Big \|^2 \;.
\end{align*}
Combining this with \eqref{eq:outline}, \eqref{eq:t_eta_00_bnd}, and
\eqref{eq:t_eta_0_bnd2} we have
\begin{align*}
  & \Bigg\{ \mathrm{E} \bigg\| \frac{\df f(\eta + s \alpha)}{\df s}
  \bigg\| \Bigg\}^2 \notag \\
  \leq & \mathrm{E} \Big\| \frac{\df
    \tilde{\eta}_{00}}{\df s} \Big\|^2 + \mathrm{E} \Big( \frac{\df
    \tilde{\eta}_0}{\df s} \Big)^2 + \sum_{i=1}^q \mathrm{E} \Big(
  \frac{\df \tilde{\eta}_i}{\df s} \Big)^2 \notag \\
  = &
  \mathcal{O}(q) U_1 + \mathcal{O} \Big( \frac{p}{\rb}
  \Big) U_2 + \mathcal{O}(q) U_3 + \mathcal{O}(1) U_4 + \mathcal{O}(1)
  \mathrm{E} \Big( \frac{\df \tilde{\eta}_0 }{\df s} \Big)^2 +
  \mathcal{O}(1) \mathrm{E} \Big \| \frac{\df \tilde{\eta}_{00}}{\df
    s} \Big \|^2 \notag \\ = & \mathcal{O}(q) U_1 + \mathcal{O} \Big(
  \frac{p}{\rb} \Big) U_2 + \mathcal{O}(q^2) U_3 + \mathcal{O}(1) U_4
  + \mathcal{O} \Big( \frac{p}{q \rb} \Big) \norm{\alpha}^2 +
  \mathcal{O} \Big( \frac{1}{q} \Big) \norm{\alpha}^2 \notag \\ = &
  \mathcal{O}(q) U_1 + \mathcal{O} \Big( \frac{p}{\rb} \Big) U_2 +
  \mathcal{O}(q^2) U_3 + \mathcal{O}(1) U_4 + \mathcal{O} \Big(
  \frac{1}{q} \Big) \norm{\alpha}^2 \;,
\end{align*}
where the last line follows from $(B_4)$ and $(B_5)$.  This is
\eqref{eq:outline2}.

%%%%%%%%%%%%%%%%%%%%%%%%%%%%%%%%%%%%%%%%%%%%%%%%%%%%%%%%%%%%%

% \section*{References}

% \bibliographystyle{elsarticle-harv}
\bibliographystyle{ims}
\bibliography{refs}

\begin{thebibliography}{17}
\expandafter\ifx\csname natexlab\endcsname\relax\def\natexlab#1{#1}\fi
\expandafter\ifx\csname url\endcsname\relax
  \def\url#1{\texttt{#1}}\fi
\expandafter\ifx\csname urlprefix\endcsname\relax\def\urlprefix{URL }\fi

\bibitem[{Baxendale(2005)}]{baxe:2005}
\textsc{Baxendale, P.~H.} (2005).
\newblock Renewal theory and computable convergence rates for geometrically
  ergodic {M}arkov chains.
\newblock \textit{Annals of Applied Probability} \textbf{15} 700--738.

\bibitem[{Diaconis(2009)}]{diac:2009}
\textsc{Diaconis, P.} (2009).
\newblock The {M}arkov chain {M}onte {C}arlo revolution.
\newblock \textit{Bulletin of the American Mathematical Society} \textbf{46}
  179--205.

\bibitem[{Diaconis et~al.(2008)Diaconis, Khare and
  Saloff-Coste}]{diac:khar:salo:2008}
\textsc{Diaconis, P.}, \textsc{Khare, K.} and \textsc{Saloff-Coste, L.} (2008).
\newblock {G}ibbs sampling, exponential families and orthogonal polynomials
  (with discussion).
\newblock \textit{Statistical Science} \textbf{23} 151--200.

\bibitem[{Durmus and Moulines(2015)}]{durm:moul:2015}
\textsc{Durmus, A.} and \textsc{Moulines, {\'E}.} (2015).
\newblock Quantitative bounds of convergence for geometrically ergodic {M}arkov
  chain in the {W}asserstein distance with application to the {M}etropolis
  adjusted {L}angevin algorithm.
\newblock \textit{Statistics and Computing} \textbf{25} 5--19.

\bibitem[{Flegal et~al.(2008)Flegal, Haran and Jones}]{fleg:hara:jone:2008}
\textsc{Flegal, J.~M.}, \textsc{Haran, M.} and \textsc{Jones, G.~L.} (2008).
\newblock Markov chain {M}onte {C}arlo: {C}an we trust the third significant
  figure?
\newblock \textit{Statistical Science} \textbf{23} 250--260.

\bibitem[{Hairer et~al.(2011)Hairer, Mattingly and
  Scheutzow}]{hair:matt:sche:2011}
\textsc{Hairer, M.}, \textsc{Mattingly, J.~C.} and \textsc{Scheutzow, M.}
  (2011).
\newblock Asymptotic coupling and a general form of {H}arris’ theorem with
  applications to stochastic delay equations.
\newblock \textit{Probability Theory and Related Fields} \textbf{149} 223--259.

\bibitem[{Horn and Johnson(2013)}]{horn:john:1985}
\textsc{Horn, R.~A.} and \textsc{Johnson, C.~R.} (2013).
\newblock \textit{Matrix Analysis}.
\newblock 2nd ed. Cambridge University Press.

\bibitem[{Madras and Sezer(2010)}]{madr:seze:2010}
\textsc{Madras, N.} and \textsc{Sezer, D.} (2010).
\newblock Quantitative bounds for {M}arkov chain convergence: {W}asserstein and
  total variation distances.
\newblock \textit{Bernoulli} \textbf{16} 882--908.

\bibitem[{Meyn and Tweedie(2009)}]{meyn:twee:2009}
\textsc{Meyn, S.~P.} and \textsc{Tweedie, R.~L.} (2009).
\newblock \textit{Markov Chains and Stochastic Stability}.
\newblock 2nd ed. Springer-Verlag, London.

\bibitem[{Qin and Hobert(2021{\natexlab{a}})}]{qin:hobe:2021b}
\textsc{Qin, Q.} and \textsc{Hobert, J.~P.} (2021{\natexlab{a}}).
\newblock On the limitations of single-step drift and minorization in {M}arkov
  chain convergence analysis.
\newblock \textit{Annals of Applied Probability \rm{(to appear)}} .

\bibitem[{Qin and Hobert(2021{\natexlab{b}})}]{qin:hobe:2021a}
\textsc{Qin, Q.} and \textsc{Hobert, J.~P.} (2021{\natexlab{b}}).
\newblock Wasserstein-based methods for convergence complexity analysis of
  {MCMC} with applications.
\newblock \textit{Annals of Applied Probability \rm{(to appear)}} .

\bibitem[{Rajaratnam and Sparks(2015)}]{raja:spar:2015}
\textsc{Rajaratnam, B.} and \textsc{Sparks, D.} (2015).
\newblock {MCMC}-based inference in the era of big data: {A} fundamental
  analysis of the convergence complexity of high-dimensional chains.
\newblock \textit{arXiv:1508.00947} .

\bibitem[{Roberts and Rosenthal(2001)}]{robe:rose:2001}
\textsc{Roberts, G.~O.} and \textsc{Rosenthal, J.~S.} (2001).
\newblock Markov chains and de-initializing processes.
\newblock \textit{Scandinavian Journal of Statistics} \textbf{28} 489--504.

\bibitem[{Roberts and Rosenthal(2004)}]{robe:rose:2004}
\textsc{Roberts, G.~O.} and \textsc{Rosenthal, J.~S.} (2004).
\newblock General state space {M}arkov chains and {MCMC} algorithms.
\newblock \textit{Probability Surveys}  20--71.

\bibitem[{Rom{\'a}n and Hobert(2012)}]{roma:hobe:2012}
\textsc{Rom{\'a}n, J.~C.} and \textsc{Hobert, J.~P.} (2012).
\newblock Convergence analysis of the {G}ibbs sampler for {B}ayesian general
  linear mixed models with improper priors.
\newblock \textit{Annals of Statistics} \textbf{40} 2823--2849.

\bibitem[{Rosenthal(1995)}]{rose:1995}
\textsc{Rosenthal, J.~S.} (1995).
\newblock Minorization conditions and convergence rates for {M}arkov chain
  {M}onte {C}arlo.
\newblock \textit{Journal of the American Statistical Association} \textbf{90}
  558--566.

\bibitem[{Yang and Rosenthal(2019)}]{yang:rose:2019}
\textsc{Yang, J.} and \textsc{Rosenthal, J.~S.} (2019).
\newblock Complexity results for {MCMC} derived from quantitative bounds.
\newblock \textit{arXiv:1708.00829} .

\end{thebibliography}

%\bibliography{abbr_refs}

\end{document}